\theoremstyle{plain}
\newtheorem{thm}{Theorem}
\newtheorem{prop}{Proposition}[section]
\newtheorem{lem}[prop]{Lemma}
\theoremstyle{definition}
\newtheorem{defn}[prop]{Definition}
\theoremstyle{remark}
\newtheorem{rem}[prop]{Remark}
\DeclareMathOperator{\EExt}{\mathcal{E}\mathit{xt}}
\DeclareMathOperator{\Ext}{Ext}
\DeclareMathOperator{\Hom}{Hom}
\DeclareMathOperator{\id}{id}
\DeclareMathOperator{\pr}{pr}
\begin{document}
\keywords{Kodaira conjecture, conic bundles}
\subjclass[2010]{32J27, 32J17, 32G05, 32Q15}
\title[Algebraic approximation]{Algebraic approximation of Kähler threefolds}
\author{Florian Schrack}
\address{Mathematisches Institut\\ Universität Bayreuth\\ 95440 Bayreuth\\ Germany}
\email{florian.schrack@uni-bayreuth.de}
\begin{abstract}
In the present work, we investigate existence of deformations and algebraic approximability for certain uniruled Kähler threefolds. In the first part, we establish existence of infinitesimal deformations for all conic bundles with relative Picard number one over a non-algebraic compact Kähler surface~$S$ and existence of positive-dimensional families of deformations in all but some special cases. In the second part, we study the question of algebraic approximability for projective bundles over~$S$ and threefolds bimeromorphic to $\mathbb{P}_1\times S$.
\end{abstract}
\bibliographystyle{amsalpha}
\maketitle
\tableofcontents
\section{Introduction}
An interesting class of complex manifolds to study is the class of compact manifolds which arise as limits of projective manifolds in the following sense:
\begin{defn}
Let $X$ be a compact complex manifold. A proper holomorphic submersion $\pi\colon\mathcal{X}\to T$ between complex manifolds $\mathcal{X}$ and $T\ni0$ is called an \emph{algebraic approximation of $X$} if
\begin{enumerate}[(i)]
\item the central fiber $\mathcal{X}_0:=\pi^{-1}(0)$ is isomorphic to $X$ and
\item there exists a sequence $(t_k)_{k\in\mathbb{N}}\subset T$ converging to $0$ such that for each~$k$, the fiber $\mathcal{X}_{t_k}:=\pi^{-1}(t_k)$ is a projective manifold. \end{enumerate}
We call $X$ \emph{algebraically approximable} if there exists an algebraic approximation of $X$. 
\end{defn}

It is a corollary of Kodaira's classification of complex surfaces that every compact Kähler surface is algebraically approximable. A new, more conceptual proof of this result not relying on classification results has recently been given by Buchdahl in \cite{Buc06} and \cite{Buc08}.

It was conjectured for a long time that indeed every compact Kähler manifold is algebraically approximable. This was shown to be false by C.~Voisin in~\cite{Voi04}, where she proved that in every dimension $\ge 4$ there exist compact Kähler manifolds not even having the homotopy type of a projective manifold.

It is still open whether every compact Kähler threefold is algebraically approximable. Since the classification theory of Kähler threefolds is still a little bit vague at the time being, the most one can possibly hope for is settling a reasonably large class of threefolds carrying some special structure.

In the present article, we undertake a first attempt towards this goal. Our strategy is to use Kodaira's approximation results for surfaces in order to construct algebraic approximations for threefolds carrying some type of $\mathbb{P}_1$-fibration over a Kähler surface.

An intermediate problem is to study the existence of deformations of these threefolds. For conic bundles, we establish the existence of infinitesimal deformations by a careful study of the geometry of discriminant loci in section~\ref{sec:conic-bundl-discr}:
\begin{thm}\label{thm:infpos}
Let $S$ be a compact non-algebraic Kähler surface and $f\colon X\to S$ a conic bundle with $\rho(X)=\rho(S)+1$. Then $H^1(T_X)\ne0$.
\end{thm}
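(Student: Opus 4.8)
The plan is to exploit the fibration $f$ through the relative tangent sequence and to produce an infinitesimal deformation of $X$ lying over a nonzero infinitesimal deformation of the base $S$. Write $T_{X/S}$ for the kernel of the differential $df\colon T_X\to f^*T_S$. Away from the discriminant $\Delta\subset S$ the map $df$ is surjective, so on $X\setminus f^{-1}(\Delta)$ one has $0\to T_{X/S}\to T_X\to f^*T_S\to0$; globally $df$ fails to be surjective only along the singular points of the singular fibres, and the first step is to record the resulting honest four-term sequence, splitting it into $0\to T_{X/S}\to T_X\to\mathcal{I}\to0$ and $0\to\mathcal{I}\to f^*T_S\to Q\to0$ with $\mathcal{I}=\mathrm{im}(df)$ and the cokernel $Q$ supported over $\Delta$.

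I would then compute the cohomology of the base term. Since every conic --- smooth, reducible, or a double line --- is a curve with $h^0(\mathcal{O})=1$ and $h^1(\mathcal{O})=0$, one has $f_*\mathcal{O}_X=\mathcal{O}_S$ and $R^qf_*\mathcal{O}_X=0$ for $q\geq1$. By the projection formula $R^qf_*f^*T_S=T_S\otimes R^qf_*\mathcal{O}_X$, the Leray spectral sequence degenerates and yields $H^i(X,f^*T_S)\cong H^i(S,T_S)$ for all $i$. Chasing the two sequences, it then suffices to exhibit a nonzero class $\xi\in H^1(S,T_S)$ that lifts successively to $H^1(\mathcal{I})$ and to $H^1(X,T_X)$: any such lift maps to $\xi\neq0$ and is therefore itself nonzero. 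The obstructions to these two liftings live in $H^1(Q)$ and in $H^2(X,T_{X/S})$ respectively, both of which are discriminant terms.

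The lifting obstruction for $\xi$ is exactly the requirement that the data defining $X$ deform along $\xi$. Concretely, $X$ is cut out in a $\mathbb{P}^2$-bundle $\mathbb{P}(E)\to S$ by a quadratic form, and for $\xi$ to lift it is enough that a bounded number of divisor classes on $S$ --- among them $c_1(E)$, the twisting class, and the class of the discriminant $\Delta$ --- stay of type $(1,1)$ to first order. Each such obstruction is the image of $\xi$ under a contraction $\cup\,c_i\colon H^1(S,T_S)\to H^2(S,\mathcal{O}_S)$. Here I would invoke the non-algebraicity of $S$: for a non-algebraic compact Kähler surface the Néron--Severi group is small relative to $H^1(T_S)$, so the simultaneous kernel of these finitely many maps has dimension at least $h^1(S,T_S)-(\#\text{classes})\cdot h^2(S,\mathcal{O}_S)$. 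Running through Kodaira's list of the possible $S$ --- non-algebraic tori, K3 surfaces, and elliptic surfaces of algebraic dimension one --- one checks this quantity is positive (for K3 surfaces $\rho\leq19<20=h^1(T_S)$, for tori $h^1(T_S)=4$ against $\rho\leq3$), yielding the desired nonzero $\xi$.

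The main obstacle is the control of $H^2(X,T_{X/S})$ and the verification that no obstructions beyond these contraction maps intervene; this is where the hypothesis $\rho(X)=\rho(S)+1$ and the geometry of the discriminant enter decisively. Via Leray, $H^2(X,T_{X/S})$ receives a contribution $H^1(S,R^1f_*T_{X/S})$ from a sheaf supported on $\Delta$, whose structure is governed by the double cover $\tilde{\Delta}\to\Delta$ parametrising the components of the singular fibres; likewise $Q$ and $H^1(Q)$ must be analysed along $\Delta$. The relative Picard number one condition forces this double cover to be connected --- there are no splitting components contributing extra divisor classes on $X$ --- which keeps the discriminant contributions under control and ensures the $R^1f_*$ and $Q$ terms create no new obstruction to lifting $\xi$. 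Carrying out this bookkeeping along $\Delta$, and checking that the dimension count of the previous paragraph survives the worst case (the elliptic surfaces, where $h^2(\mathcal{O}_S)=p_g$ may be large), is the technical heart of the argument and the step I expect to be the most delicate.
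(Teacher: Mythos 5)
Your proposal takes a genuinely different route from the paper (which never lifts deformations of $S$ at all), but it has two gaps that I consider fatal as the argument stands. The first is the central identification of the lifting obstructions. The honest obstructions to lifting $\xi\in H^1(T_S)\cong H^1(f^*T_S)$ are the connecting maps into $H^1(Q)$ and $H^2(T_{X/S})$, and your claim that these are computed by cup products of $\xi$ against finitely many divisor classes ($c_1(E)$, $\Delta_f$, etc.) is not justified and is false in general. Even the natural sufficient condition --- deforming the pair $(E,\sigma)$ with $X\subset\mathbb{P}(E)$ cut out by $\sigma\in H^0(S^2E\otimes\det E^*)$ --- is obstructed by $\xi\cdot\mathrm{At}(E)\in H^2(E^*\otimes E)$, where $\mathrm{At}(E)$ is the full Atiyah class of the rank-three bundle $E$: only its \emph{trace} part is the cup product with $c_1(E)$, and the trace-free part is invisible to any ``divisor classes stay of type $(1,1)$'' condition; on top of that there is an obstruction in $H^1(S^2E\otimes\det E^*)$ to extending $\sigma$ itself. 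That higher-rank bundles on non-algebraic surfaces need not deform along deformations of the surface is precisely the difficulty the paper spends Section~\ref{sec:vector-bundles-k3} overcoming for rank two on K3 surfaces and tori, and the exceptional case (i) of Theorem~\ref{thm:infconi} (torus base, projectively flat $E$) flags exactly the regime where this obstruction theory is delicate. Similarly, your assertion that $\rho(X)=\rho(S)+1$ forces the discriminant double cover to be connected and thereby ``creates no new obstruction'' in the $Q$- and $R^1f_*$-terms is a hope, not an argument; in the paper the relative Picard hypothesis enters through an entirely different mechanism, namely Miyanishi's result that $\Delta_f$ has property~(P), which feeds into Chern-class inequalities.

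The second gap is the dimension count, and it fails exactly where the theorem is hardest. For a minimal properly elliptic non-algebraic surface with $c_2(S)=0$ (elliptic fiber bundles up to multiple fibers), Riemann--Roch gives $\chi(T_S)=2\chi(\mathcal{O}_S)+K_S^2-c_2(S)=0$, so $h^1(T_S)=h^0(T_S)+h^2(T_S)$ can be very small while $p_g\ge1$; a bound of the shape $h^1(T_S)-(\#\text{classes})\cdot p_g>0$ simply cannot be won by counting there, and this is the case you explicitly defer as ``the most delicate.'' But that case is the actual heart of the statement. The paper's proof avoids all of this: Riemann--Roch on $X$ together with the Bănică--Le Potier inequality and the discriminant inequalities (Lemmas \ref{lem:disca0} and \ref{lem:disca1}, which is where property~(P) and hence $\rho(X)=\rho(S)+1$ are used) give $h^1(T_X)\ge h^2(T_X)$, with equality forcing $c_1^2(S)=c_2(S)=0$ and $H^0(T_X)=0$; then the remaining possibility $h^1(T_X)=h^2(T_X)=0$ is excluded by showing that $H^2(T_X)=0$ forces $H^2(T_S)\cong H^0(\Omega^1_S\otimes K_S)^*=0$ (via $\Ext^3(\Omega^1_{X/S},\mathcal{O}_X)=0$), pinning $S$ down as minimal properly elliptic, and deriving a contradiction with $H^0(K_S)\ne0$ (which holds for any non-algebraic Kähler surface) by relative duality along the elliptic fibration. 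If you want to salvage your approach, you would need to genuinely control $H^2(T_{X/S})$, $H^1(Q)$ and the Atiyah-class obstruction, and you should expect this to be at least as hard as the paper's argument in the properly elliptic case.
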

In most cases, we can even show the existence of a positive-dimensional family of deformations:
\begin{thm}\label{thm:infconi}
Let $S$ be a compact non-algebraic Kähler surface and $f\colon X\to S$ a conic bundle with $\rho(X)=\rho(S)+1$. Then
\[h^1(T_X)>h^2(T_X),\]
except possibly in the following cases:
\begin{enumerate}[(i)]
\item $S$ is a torus and $E:=f_*(K_{X/S}^*)$ is projectively flat, i.e.~$\mathbb{P}(E)$ is given by a representation $\pi_1(S)\to\mathrm{PGL}(3,\mathbb{C})$. Furthermore, $X$ is not isomorphic to $\mathbb{P}(V)$ for any rank-two bundle $V$ on $S$.
\item $S$ is a minimal properly elliptic surface (i.e.~$\kappa(S)=1$) and all singular fibers of the elliptic fibration are multiples of smooth elliptic curves.
\end{enumerate}
\end{thm}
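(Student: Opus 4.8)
The plan is to reduce the inequality $h^1(T_X)>h^2(T_X)$ to a computation of the Euler characteristic $\chi(T_X)$ together with control of the two boundary terms $h^0(T_X)$ and $h^3(T_X)$. First I would dispose of $h^3$: by Serre duality $h^3(T_X)=h^0(\Omega^1_X\otimes K_X)$, and restricting a global section to a general fibre $F\cong\mathbb{P}_1$ gives a section of $(\Omega^1_X\otimes K_X)|_F$, which from the relative cotangent sequence sits inside $\mathcal{O}_F(-2)^{\oplus2}\oplus\mathcal{O}_F(-4)$ and hence vanishes; since the general fibres are dense, every such global section is zero and $h^3(T_X)=0$. Combined with $\chi(T_X)=h^0-h^1+h^2-h^3$ this yields the key identity
\[h^1(T_X)-h^2(T_X)=h^0(T_X)-\chi(T_X),\]
so it suffices to prove $\chi(T_X)\le0$, to determine exactly when equality holds, and in those borderline cases to exhibit a nonzero holomorphic vector field.

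The Euler characteristic I would compute through the relative tangent sequence $0\to T_{X/S}\to T_X\to f^*T_S\to0$. Since $f$ is a conic bundle, every fibre has arithmetic genus zero, so $R^1f_*\mathcal{O}_X=0$, hence $Rf_*f^*T_S=T_S$ and $\chi(X,f^*T_S)=\chi(S,T_S)$, giving $\chi(T_X)=\chi(T_{X/S})+\chi(S,T_S)$. The surface term is the classical $\chi(S,T_S)=\tfrac{1}{6}(7K_S^2-5e(S))$; using that a non-algebraic compact Kähler surface has $\kappa(S)\in\{0,1\}$ with $K_S^2\le0$ and $e(S)\ge0$, this is $\le0$, and it vanishes precisely when $S$ is a torus or a minimal properly elliptic surface with $e(S)=0$—the latter being exactly the situation in case~(ii), since a fibre that is a multiple of a smooth elliptic curve contributes nothing to the topological Euler number. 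For the relative term I would use the embedding $X\subset\mathbb{P}(E)$ with $E=f_*(K_{X/S}^*)$ of rank three and compute $\chi(T_{X/S})=\chi(S,f_*T_{X/S})-\chi(S,R^1f_*T_{X/S})$ by Grothendieck–Riemann–Roch, expressing everything in terms of $c_1(E)$, $c_2(E)$ and the invariants of $S$. Here $f_*T_{X/S}$ is the rank-three sheaf of relative infinitesimal automorphisms (the orthogonal algebra of the relative quadratic form cutting out $X$) and $R^1f_*T_{X/S}$ is a torsion sheaf along the discriminant, both identified by the geometry of section~\ref{sec:conic-bundl-discr}. The outcome I expect is that $\chi(T_X)\le0$, with equality forcing $\chi(S,T_S)=0$—so $S$ is a torus or a surface as in~(ii)—and, in the torus case, projective flatness of $E$.

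With $\chi(T_X)\le0$ in hand, the cases with $\chi(T_X)<0$ give $h^1-h^2=h^0-\chi>0$ at once. It remains to treat $\chi(T_X)=0$, which by the above occurs only for $S$ a torus with $E$ projectively flat, or for $S$ a special elliptic surface as in~(ii). In the torus case, equality also forces the discriminant to behave rigidly; when moreover $X=\mathbb{P}(V)$ is a smooth $\mathbb{P}_1$-bundle given by a projective representation of $\pi_1(S)$, the flat structure lets the translation vector fields of the torus lift to $X$, so that $h^0(T_X)\ge h^0(T_S)>0$ and $h^1>h^2$; this configuration is therefore \emph{not} exceptional, consistent with the clause excluding $X\cong\mathbb{P}(V)$ in~(i). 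The genuinely undecided configurations, where $\chi(T_X)=0$ and no such vector field is available, are precisely those listed in~(i) and~(ii), and there I would simply fall back on Theorem~\ref{thm:infpos} to record that $h^1(T_X)\ne0$.

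The main obstacle is the relative computation over the discriminant: correctly determining $f_*T_{X/S}$ and $R^1f_*T_{X/S}$ for a conic bundle with genuinely singular fibres, and showing that the discriminant's contribution to $\chi(T_X)$ packages into a single non-positive, Bogomolov-type term whose vanishing is equivalent to projective flatness of $E$. This is exactly where the fine geometry of section~\ref{sec:conic-bundl-discr} must be brought to bear; by comparison, the secondary difficulty of producing a holomorphic vector field in the borderline smooth torus case, and of cleanly separating which equality configurations fall under~(i) versus~(ii), should be routine once the Euler characteristic formula is established.
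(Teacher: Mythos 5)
Your overall architecture --- kill $h^3(T_X)$ by Serre duality, rewrite the claim as $h^1(T_X)-h^2(T_X)=h^0(T_X)-\chi(T_X)$, prove $\chi(T_X)\le 0$, and analyze the equality cases --- is exactly the skeleton of the paper's argument (Propositions~\ref{prop:conbun} and~\ref{prop:exdefo}). But your proposal stops precisely where the real work begins: you only ``expect'' that $\chi(T_X)\le0$ and that equality forces projective flatness of $E$, and you explicitly defer this to ``the fine geometry of section~\ref{sec:conic-bundl-discr}.'' That inequality \emph{is} the content of the theorem. In the paper it comes from the Riemann--Roch identity~\eqref{eq:riero} combined with \emph{two independent} non-negativity statements: the B\u{a}nic\u{a}--Le~Potier inequality $c_2(E)-\tfrac13c_1^2(E)\ge0$ for bundles on non-algebraic surfaces \eqref{eq:banlep}, and the discriminant-locus inequality $c_1^2(E)\ge 3c_1(E)c_1(S)+4c_1^2(S)$ \eqref{eq:chern}. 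The second one is where the hypothesis $\rho(X)=\rho(S)+1$ enters: it forces $\Delta_f\in\lvert\det E^*\rvert$ to have property~(P), and Lemmas~\ref{lem:disca0} and~\ref{lem:disca1} convert that into the Chern-class inequality by induction over blow-downs and Kodaira's classification of elliptic fibers. Your ``single non-positive Bogomolov-type term'' does not exist as such, and nowhere in your sketch is the Picard-number hypothesis used --- without it the statement is false, so no argument ignoring it can succeed. There is also a technical error in your computation scheme: for a conic bundle with singular fibers the sequence $0\to T_{X/S}\to T_X\to f^*T_S\to 0$ is \emph{not} exact on the right, since $df$ drops rank at every singular point of a fiber (in the local model $xy=s$ the cokernel is $\mathcal{O}/(x,y)$, supported on the curve of fiber singularities). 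Hence $\chi(T_X)=\chi(T_{X/S})+\chi(S,T_S)$ fails exactly in the cases where the discriminant is non-empty, i.e.\ exactly where the geometry matters; the paper sidesteps this by performing Riemann--Roch on $X$ directly via the embedding $X\subset\mathbb{P}(E)$.

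The exclusion of $X\cong\mathbb{P}(V)$ in case~(i) also has a gap. Your lifting argument needs $\mathbb{P}(V)$ itself to be flat, i.e.\ given by a representation $\pi_1(S)\to\mathrm{PGL}(2,\mathbb{C})$; what the equality analysis would give you is projective flatness of $E=f_*(K_{X/S}^*)\cong S^2V\otimes\det V^*$, which is a different statement --- you would still need the step $c_2(E)=4\bigl(c_2(V)-\tfrac14c_1^2(V)\bigr)=0$ plus Yang's theorem applied to $V$ itself. The paper's argument avoids flatness entirely: equality forces $H^0(T_X)=0$ by Proposition~\ref{prop:exdefo}, hence $H^0(T_{X/S})\cong H^0(S,S^2V\otimes\det V^*)=0$; since $S^2V\otimes\det V^*$ is self-dual (rank two gives $V^*\cong V\otimes\det V^*$) and $K_S$ is trivial, Serre duality yields $H^2(T_{X/S})=0$, and the relative tangent sequence (exact here, as $\mathbb{P}(V)\to S$ is a submersion) gives $h^1(T_X)\ge h^1(T_S)=4>2\ge h^2(T_X)$, contradicting equality. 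Finally, your appeal to Theorem~\ref{thm:infpos} inside the exceptional cases is unnecessary: the theorem asserts nothing in cases~(i) and~(ii).
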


By studying vector bundles over K3 surfaces and tori in section~\ref{sec:vector-bundles-k3}, we lay the basis for understanding projectivized rank-two bundles over surfaces of Kodaira dimension $0$. We obtain the following result:
\begin{thm}\label{thm:projr2b}
Let $S$ be a compact Kähler surface with $\kappa(S)=0$ and $V$ a holomorphic rank-two vector bundle on $S$. Then the projective bundle $\mathbb{P}(V)$ is algebraically approximable.
\end{thm}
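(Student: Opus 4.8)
The plan is to realize an algebraic approximation of $X=\mathbb{P}(V)$ as a relative projective bundle over an algebraic approximation of the base $S$. First I would dispose of the easy cases. If $S$ is projective, then $V$ is an algebraic vector bundle by GAGA, so $\mathbb{P}(V)$ is projective and there is nothing to prove. By the Enriques--Kodaira classification, a non-projective compact Kähler surface with $\kappa=0$ is, up to blow-ups, either a non-projective $2$-torus or a non-projective $K3$ surface, since Enriques and bielliptic surfaces are automatically projective. Blow-ups I would treat by approximating the minimal model $S'$ by some $\mathcal{S}'\to T$ and relatively blowing up a section of $\mathcal{S}'\to T$ through the centre, which exists after shrinking $T$; this reduces the problem to the case where $S$ is a $K3$ surface or a $2$-torus, so in particular $K_S\cong\mathcal{O}_S$.

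Second, I would set up the deformation theory of the pair. Writing $f\colon X\to S$ for the bundle projection, the relative tangent sequence
\[0\to T_{X/S}\to T_X\to f^*T_S\to0\]
together with $f_*T_{X/S}=\mathcal{E}nd_0(V)$ and $R^1f_*T_{X/S}=0$ shows that deforming $X$ over a deformation of $S$ in a direction $\kappa\in H^1(S,T_S)$ is obstructed precisely by the cup product $\kappa\cup\mathrm{at}(\mathbb{P}(V))\in H^2(S,\mathcal{E}nd_0(V))$ with the Atiyah class of the $\mathbb{P}^1$-bundle. Hence it suffices to find an algebraic approximation $\mathcal{S}\to T$ of $S$, i.e.\ one with a dense sequence of projective fibres, along which $\mathbb{P}(V)$ extends to a relative $\mathbb{P}^1$-bundle $\mathbb{P}(\mathcal{V})$; its fibres over the projective members $\mathcal{S}_{t_k}$ are then projective, again by GAGA.

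Third, the simple case is immediate. If $V$ is simple, then $H^0(\mathcal{E}nd_0(V))=0$, and since $K_S\cong\mathcal{O}_S$ and $\mathcal{E}nd_0(V)$ is self-dual under the trace form, Serre duality gives $H^2(\mathcal{E}nd_0(V))\cong H^0(\mathcal{E}nd_0(V))^*=0$. Thus all obstructions vanish and $\mathbb{P}(V)$ extends over an arbitrary deformation of $S$; I would then invoke the classical density of projective deformations of $K3$ surfaces and of tori to conclude.

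The main obstacle is the non-simple case, where $H^2(\mathcal{E}nd_0(V))\neq0$ and the Atiyah obstruction need not vanish. Here I would use the structure results of section~\ref{sec:vector-bundles-k3}: a non-simple rank-two bundle is, in the relevant cases, either a direct sum $L_1\oplus L_2$ or a non-split extension $0\to L_1\to V\to L_2\to0$ of line bundles, so that $\mathbb{P}(V)$ is governed by the $L_i$ and an extension class. The strategy is then to choose the algebraic approximation of $S$ along period directions that simultaneously keep the classes $c_1(L_i)$ of type $(1,1)$---so that the $L_i$, and with them the extension class, deform---and make $S$ projective; the crucial point is that within the sub-period-domain orthogonal to $\langle c_1(L_i)\rangle$ the projective periods still accumulate at the period of $S$. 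Carrying this out uniformly for $K3$ surfaces and tori, and keeping track of the non-trivial canonical bundle introduced by blow-ups, is the technical heart of the argument.
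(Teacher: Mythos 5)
Your overall architecture runs parallel to the paper's: reduce to a minimal model, split into simple and non-simple bundles, and deform $S$ along period directions orthogonal to the relevant Néron--Severi classes so that the line-bundle data follows along. Your simple case is essentially the paper's Lemma~\ref{lem:simpbun}: by simplicity and Serre duality (using $K_S\cong\mathcal{O}_S$) one gets $h^2(S^2V\otimes\det V^*)=0$, and Horikawa's theorem then lets the $\mathbb{P}_1$-bundle follow any deformation of $S$. However, two of your steps have genuine gaps.

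First, the blow-up reduction. Relatively blowing up a section of $\mathcal{S}'\to T$ does give an algebraic approximation of the non-minimal surface $S$, but it does not reduce the \emph{problem} to the minimal case: the rank-two bundle $V$ lives on the blow-up and is in general not pulled back from the minimal model, so you still must extend $V$ itself over the blown-up family. This is precisely the content of the paper's Lemma~\ref{lem:blowind}, which extends $(p_*V)^{**}$ over the family of minimal models, extends the cokernel $Q$ of $p^*(p_*V)^{**}\to V$ (a sheaf supported on the exceptional curve) constantly near the exceptional locus, and then extends the class of the resulting extension using base change for relative $\mathcal{E}\mathit{xt}$ sheaves. Your alternative, hinted at in your closing sentence, of running the deformation theory directly on non-minimal $S$ breaks down already in the simple case: once $K_S\not\cong\mathcal{O}_S$, Serre duality gives $H^2(S^2V\otimes\det V^*)\cong H^0(S^2V\otimes\det V^*\otimes K_S)^*$, which simplicity does not kill, so the obstruction space need not vanish.

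Second, the non-simple case. Your structural claim is false: a non-simple rank-two bundle on a K3 surface or torus is an extension $0\to L_1\to V\to L_2\otimes\mathcal{I}_Y\to0$ where $Y$ is a codimension-two subscheme that need not be empty. Concretely, on a K3 surface with $\mathrm{Pic}(S)=0$ the Serre construction yields locally free non-split extensions $0\to\mathcal{O}_S\to V\to\mathcal{I}_Y\to0$ with $\lvert Y\rvert=2$; such a $V$ is non-simple (it carries a nonzero nilpotent endomorphism, since $V^*\cong V$ forces $\Hom(V,\mathcal{O}_S)\ne0$), yet it admits no filtration by line bundles, as that would force $c_2(V)=0\ne2$. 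So your case analysis omits exactly the bundles whose treatment is the technical heart of the paper's Lemma~\ref{lem:k3torr2}: one must extend $Y$ flatly over the family \emph{and} show that the extension class extends, which requires the function $t\mapsto h^1(\mathcal{L}_t\otimes\mathcal{I}_{\mathcal{Y}_t})$ to be \emph{constant} --- semicontinuity alone does not suffice --- and establishing this constancy is what Propositions~\ref{prop:k3coh} and~\ref{prop:torcoh} and the case distinctions ($a(S)=0$ versus $a(S)=1$, K3 versus torus) in Lemma~\ref{lem:k3torr2} are for. Your phrase ``and with them the extension class, deform'' asserts the conclusion of that argument rather than supplying it. Note also that the paper's dichotomy is cleaner than simple versus non-simple: either $H^0(V\otimes L)\ne0$ for some line bundle $L$ --- which covers all non-simple $V$, ideal sheaves included --- or $V$ is simple; the period-domain density statement you flag as crucial is fine and is exactly the paper's Proposition~\ref{prop:k3torlb} together with the remark following it.
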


By combining techniques from the previous sections, we extend our results in various directions in section~\ref{sec:algebr-appr-cert}. First, we prove algebraic approximability of threefolds bimeromorphic to a product of $\mathbb{P}_1$ with a compact Kähler surface:
\begin{thm}\label{thm:main}
Let $X$ be a compact Kähler threefold for which there exists a compact Kähler surface $S$ and a bimeromorphic map $\phi\colon \mathbb{P}_1\times S\dashrightarrow X$. Then $X$ is algebraically approximable.
\end{thm}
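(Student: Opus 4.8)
The plan is to deduce the theorem from the approximability of surfaces (Kodaira's theorem, in the form cited in the introduction) by propagating a given approximation through the bimeromorphic modifications relating $X$ to $\mathbb{P}_1\times S$. First I would dispose of a trivial case: if $S$ is projective, then $\mathbb{P}_1\times S$ is projective, so $X$ — being bimeromorphic to it and compact Kähler — is Moishezon and Kähler, hence projective by Moishezon's theorem, and the constant family is an algebraic approximation. Thus one may assume $S$ is non-algebraic. For the base case, fix an algebraic approximation $\pi_S\colon\mathcal S\to T$ of $S$ with a sequence $t_k\to0$ of projective fibers; then $\mathrm{id}_{\mathbb{P}_1}\times\pi_S\colon \mathbb{P}_1\times\mathcal S\to T$ is an algebraic approximation of $\mathbb{P}_1\times S$, since each $\mathbb{P}_1\times\mathcal S_{t_k}$ is projective.

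Next I would reduce to a single elementary modification. By the weak factorization theorem for bimeromorphic maps of compact complex manifolds of Fujiki class~$\mathcal C$ (which contains all compact Kähler manifolds), $\phi$ factors as a finite chain $\mathbb{P}_1\times S=Z_0\dashrightarrow\cdots\dashrightarrow Z_n=X$ in which each step is a blow-up or a blow-down along a smooth center; for threefolds such centers are smooth points or smooth curves. It therefore suffices to establish the following propagation lemma: if $Y$ and $Y'=\mathrm{Bl}_C Y$ are smooth compact Kähler threefolds with $C$ a smooth point or curve, then $Y$ is algebraically approximable if and only if $Y'$ is.

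Given an approximation $\mathcal Y\to T$ with $\mathcal Y_0=Y$, the heart of the matter is to spread the center $C$ out to a subspace $\mathcal C\subset\mathcal Y$ flat over $T$ (after shrinking $T$ and, if necessary, a finite base change) with $\mathcal C_0=C$. Granting this, $\mathrm{Bl}_{\mathcal C}\mathcal Y\to T$ is a proper submersion with central fiber $Y'$ whose special fibers $\mathrm{Bl}_{\mathcal C_{t_k}}\mathcal Y_{t_k}$ are blow-ups of projective manifolds along smooth centers, hence projective; the opposite direction instead contracts a relative exceptional divisor fiberwise (which exists, since $E$ is fiberwise contractible) and rests on the same spreading-out. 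A point center poses no problem, as the submersion $\pi$ admits a local holomorphic section through $C$. For a smooth curve $C$, the relative Douady space $\rho\colon\mathcal D(\mathcal Y/T)\to T$ is smooth over $T$ at $[C]$ as soon as $H^1(C,N_{C/Y})=0$, in which case $\rho$ is locally surjective and yields the desired $\mathcal C$.

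I expect the main obstacle to be precisely the control of this $H^1$, equivalently the persistence of curve centers, for those $C$ not contained in fibers of the $\mathbb{P}_1$-fibration. Such a $C$ maps onto a curve $B\subset S$ under the (rational) projection of $Z_i$ to $S$, and its deformations are governed by those of $B$ in the surface approximation $\mathcal S\to T$. One must therefore arrange Kodaira's approximation of $S$ so that the finitely many curves $B$ occurring as images of centers persist as relative curves in $\mathcal S$ — which should be possible because such $B$ are rigid and their classes remain of type $(1,1)$ under the small deformations used in the construction — and then lift them through the intermediate $\mathbb{P}_1$-fibrations. Finally, the projectivity of the special fibers may alternatively be seen a posteriori: each $\mathcal Y_{t_k}$ is bimeromorphic to the projective manifold $\mathbb{P}_1\times\mathcal S_{t_k}$, hence Moishezon, and is Kähler by stability of the Kähler property under small deformations, so it is projective by Moishezon's theorem.
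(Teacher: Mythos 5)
Your reduction strategy (factor $\phi$ into smooth blow-ups/blow-downs and propagate an approximation step by step) is reasonable in spirit, but the step you yourself flag as the ``main obstacle'' is a genuine gap, and it is exactly where the whole difficulty of the theorem sits. The criterion you propose for spreading out a curve center $C$ --- smoothness of the relative Douady space at $[C]$, guaranteed by $H^1(C,N_{C/Y})=0$ --- fails precisely in the cases that matter. Take $S$ a non-algebraic K3 surface, $B\subset S$ a $(-2)$-curve, and $C=\{0\}\times B\subset\mathbb{P}_1\times S$; then $N_{C/(\mathbb{P}_1\times S)}\cong\mathcal{O}_{\mathbb{P}_1}\oplus\mathcal{O}_{\mathbb{P}_1}(-2)$, so $H^1(N_{C/Y})\ne0$. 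Worse, this is not merely the failure of a sufficient criterion: for a \emph{general} algebraic approximation of $S$ the curve $B$ genuinely disappears (its class ceases to be of type $(1,1)$ --- Kodaira's approximations of a K3 move the period precisely so as to change which classes are algebraic), hence $C$ admits no flat extension at all and your propagation lemma is false as stated. Your fallback remark that the classes of the relevant curves ``remain of type $(1,1)$ under the small deformations used in the construction'' is therefore not something one can assume; one must \emph{construct} a special approximation with this property (in the paper: deform along $NS(S)^\perp$ in the period domain and prove that projective fibers are still dense there, Proposition~\ref{prop:k3torlb}, together with constancy of cohomology, Proposition~\ref{prop:k3coh}), and then show that curves not only persist but do so with enough control to carry the blow-up centers of the intermediate threefolds along. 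That construction, packaged as Lemma~\ref{lem:surfapp}, is the core of the proof and is exactly what your proposal defers. Note also that your blow-down direction has the same hidden issue: to contract a relative exceptional divisor you first need the divisor to persist in the family, which is again a non-automatic extension problem.

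For comparison, the paper's proof avoids both weak factorization and all Douady-space/normal-bundle arguments. By Hironaka's Chow lemma one finds a single roof $\hat{X}$ with $p\colon\hat{X}\to X$ a composition of smooth blow-ups and $f\colon\hat{X}\to\mathbb{P}_1\times S$ a proper modification; $f$ is an isomorphism outside $\mathbb{P}_1\times C$ for some compact curve $C\subset S$. Lemma~\ref{lem:surfapp} provides an algebraic approximation $\mathcal{S}\to T$ that is a \emph{product} $U\times T$ over a neighborhood $U$ of $C$, so the modification $f$ extends over the family \emph{constantly}, by gluing $\tilde f=f|_{f^{-1}(\mathbb{P}_1\times U)}\times\mathrm{id}_T$ with the identity elsewhere --- no deformation theory of curves or of the modification is needed at all. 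The descent from $\hat{\mathcal{X}}$ to a family deforming $X$ is then done by Horikawa's theorem on deformations of maps, using $p_*\mathcal{O}_{\hat{X}}=\mathcal{O}_X$ and $R^1p_*\mathcal{O}_{\hat{X}}=0$, and projectivity of the nearby fibers follows as in your last remark (Moishezon $+$ Kähler $\Rightarrow$ projective). So the local triviality of the surface approximation near curves is the key idea your argument is missing; once you have it, the factorization machinery you invoke becomes unnecessary.
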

Finally, we prove algebraic approximability for certain types of conic bundles:
\begin{thm}\label{thm:ellcon}
Let $S$ be a compact Kähler surface with an elliptic fibration $r\colon S\to C$ over a smooth curve $C$. If $f\colon X\to S$ is a conic bundle such that the rank-three bundle $f_*(K_{X/S}^*)$ is trivial when restricted to the general fiber of $r$, then $X$ is algebraically approximable.
\end{thm}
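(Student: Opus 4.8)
The plan is to deform the base surface $S$ and to carry the conic bundle along with it, exploiting the fact that a conic bundle over a \emph{projective} surface is automatically projective. Writing $E=f_*(K_{X/S}^*)$, the relative anticanonical map embeds $X$ as a divisor in the $\mathbb{P}_2$-bundle $p\colon\mathbb{P}(E)\to S$, cut out fibrewise by a quadratic form, i.e.\ by a section $q$ of $\mathcal{O}_{\mathbb{P}(E)}(2)\otimes p^*\delta$ for a suitable line bundle $\delta$ on $S$. If $\pi\colon\mathcal{S}\to T$ is an algebraic approximation of $S$ to whose total space both $E$ and $q$ extend — say to a bundle $\mathcal{E}$ and a relative conic $\mathcal{Q}$ — then $\mathcal{X}:=\{\mathcal{Q}=0\}\subset\mathbb{P}(\mathcal{E})$ is, after shrinking $T$ around $0$, a proper submersion over $T$ with central fibre $X$; over each projective member $S_{t_k}$ the bundle $\mathbb{P}(E_{t_k})$ is projective, hence so is its closed subvariety $X_{t_k}$. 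Thus it suffices to produce an algebraic approximation of $S$ compatible with the pair $(E,q)$.

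To locate the non-algebraicity of $S$ I would use the elliptic fibration. Over the locus $C^0\subseteq C$ of smooth reduced fibres, $r\colon S\to C$ is a torus fibration whose complex structure is governed by its periods, and it is precisely the deformation of these periods that the approximation of elliptic surfaces recalled in the introduction moves in order to reach projective surfaces accumulating at~$0$, the curve $C$ itself being fixed and projective throughout. The hypothesis that $E$ is trivial on the general fibre~$F$ is what decouples the conic bundle from this motion: since $E|_F\cong\mathcal{O}_F^{\oplus3}$, the evaluation map $r^*r_*E\to E$ is an isomorphism over $C^0$, so $E$, $\delta$ and the quadratic form $q$ are all pulled back from $C$ away from the finitely many special fibres. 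I would therefore build the deformation of $X$ by deforming the elliptic fibration over the fixed base $C$ and pulling the (fixed) $C$-level data back to each $S_t$, so that $E$ and $q$ extend tautologically over $C^0$.

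To make this global I would phrase the extension through obstruction theory. A deformation of $S$ with Kodaira–Spencer class $\kappa\in H^1(T_S)$ carries $E$ along to first order precisely when the contraction $\langle\kappa,\mathrm{At}(E)\rangle\in H^2(S,\mathcal{E}\mathit{nd}(E))$ vanishes, and the deformations produced above are \emph{vertical}, i.e.\ $\kappa$ lies in the image of $H^1(T_{S/C})\to H^1(T_S)$. Such a $\kappa$ only pairs with the relative Atiyah class $\mathrm{At}_{S/C}(E)$, which vanishes over $C^0$ because a fibrewise trivial bundle admits a relative flat connection there; the remaining contribution is a class on the curve supported over the critical values of $r$, which by the Leray isomorphism $H^2(S,\mathcal{E}\mathit{nd}(E))\cong H^1(C,R^1r_*\mathcal{E}\mathit{nd}(E))$ (using $\dim C=1$ and the one-dimensionality of the fibres) can be analysed directly. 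The conic $q$ is then extended by the analogous argument on sections.

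The main obstacle is exactly this last point: the singular and multiple fibres of $r$, over which $E$ is not literally a pullback from $C$ and over which the period deformation must be carried out compatibly with Kodaira's description of degenerate elliptic fibres. There one must check that the vertical Kodaira–Spencer directions yielding projective $S_{t_k}$ can be chosen so that both $E$ and $q$ still extend, that the resulting total space $\mathcal{X}$ is smooth with $\mathcal{X}_0\cong X$, and — by an openness argument around $0$ — that the nearby $X_{t_k}$ remain smooth conic bundles, their discriminant loci deforming without acquiring bad singularities. Granting this, everything else reduces to the projectivity of conic bundles over projective surfaces established in the first paragraph.
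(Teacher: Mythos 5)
Your overall strategy is the paper's: embed $X$ in $\mathbb{P}(E)$ as the zero locus of a section $\sigma\in H^0(S^2E\otimes\det E^*)$, observe that triviality of $E$ on the general fibre makes the evaluation map $r^*r_*E\to E$ an isomorphism away from finitely many fibres $r^{-1}(Z)$, deform $S$ through the approximating family of elliptic surfaces with $C$ held fixed, and extend $(E,\sigma)$ tautologically by pullback from $C$ over the locus where $E$ \emph{is} a pullback. Up to that point you match the paper. The genuine gap is exactly the point you defer as ``the main obstacle'': the finitely many special fibres. Your proposed tool there, the first-order criterion $\langle\kappa,\mathrm{At}(E)\rangle=0$, cannot close it, for two reasons. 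First, Atiyah-class contractions only control extension of $E$ to first order along a tangent direction; to run the argument you need an actual bundle $\mathcal{E}$ on the total space $\mathcal{S}$ of the family, together with a section of $S^2\mathcal{E}\otimes\det\mathcal{E}^*$ restricting to $\sigma$, and nothing in your sketch kills the higher-order obstructions or produces these objects. Second, even granting an extension of $E$, you give no mechanism at all for extending the quadratic form across the degenerate fibres, which is where the conic-bundle data genuinely interacts with the degeneration; ``Granting this, everything else reduces to\dots'' is an admission that the crux is unproved.

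What is missing is a tool you never invoke, although it is part of the statement you cite: the approximating family of Proposition~\ref{prop:elsurf} is \emph{locally trivial over $C$}, i.e.\ every $c\in C$ has a neighborhood $U$ with $R^{-1}(U\times T)\cong r^{-1}(U)\times T$ over $C\times T$. This replaces all obstruction theory. One writes $0\to r^*r_*E\to E\to Q\to 0$ with $Q$ torsion supported on $r^{-1}(Z)$; the subsheaf extends tautologically as $R^*\pr_1^*r_*E$, the quotient $Q$ extends constantly near $Z$ using local triviality (flat over $T$, with $\dim\Ext^1(\mathcal{Q}|_{\mathcal{S}_t},\mathcal{O}_{\mathcal{S}_t})$ constant, so that by the base-change theorem \cite[Satz~3]{BPS80} the extension class, and hence a bundle $\mathcal{E}$ with $\mathcal{E}|_{\mathcal{S}_0}\cong E$, extends after shrinking $T$). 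Likewise $\sigma$ is extended constantly on $R^{-1}(U\times T)$ near each point of $Z$ and tautologically over $R^{-1}((C\setminus Z)\times T)$; the two extensions agree on the overlap, hence glue to a global section $s$ with $s|_{\mathcal{S}_0}=\sigma$, whose zero locus is the desired algebraic approximation of $X$. If you substitute this local-triviality argument for your Atiyah-class discussion, your proof becomes the paper's.
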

\begin{thm}\label{thm:splitcon}
Let $S$ be a K3 surface and $f\colon X\to S$ a conic bundle such that $f_*(K_{X/S}^*)$ splits as a direct sum of line bundles. Then $X$ is algebraically approximable.
\end{thm}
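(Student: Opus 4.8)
The plan is to deform the base $S$ to nearby projective K3 surfaces while dragging the entire conic bundle structure along, using the splitting hypothesis to reduce everything to deforming finitely many line bundles and their sections. First I would dispose of the trivial case: if $S$ is itself projective then $\mathbb{P}(E)$ is projective and $X\subset\mathbb{P}(E)$ is a closed submanifold, hence projective, so the constant family is an algebraic approximation; thus I may assume $S$ is non-algebraic, i.e.\ $\mathrm{NS}(S)$ carries no class of positive self-intersection. Writing $E=f_*(K_{X/S}^*)=L_1\oplus L_2\oplus L_3$ and recalling that the conic bundle is the divisor $X\in|\mathcal{O}_{\mathbb{P}(E)}(2)\otimes\pi^*L|$ (with $\pi\colon\mathbb{P}(E)\to S$) cut out by a symmetric form $q\in H^0(S,S^2E^*\otimes L)$ for a suitable line bundle $L$, the splitting gives $S^2E^*\otimes L=\bigoplus_{i\le j}N_{ij}$ with $N_{ij}=L_i^{-1}\otimes L_j^{-1}\otimes L$, so that $q$ is a symmetric matrix of sections $q_{ij}\in H^0(S,N_{ij})$.

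Next I would build the base of the approximation by a period/lattice argument. Let $\Lambda\subset\mathrm{NS}(S)$ be generated by $c_1(L_1),c_1(L_2),c_1(L_3),c_1(L)$; since $S$ is non-algebraic, $\Lambda$ is negative semidefinite. Deformations of a K3 surface are unobstructed, and by local Torelli the class-preserving locus, where the period stays orthogonal to $\Lambda$, is a smooth subspace $T$ of the Kuranishi family over which each $L_i$ and $L$ extends to a line bundle $\mathcal{L}_i,\mathcal{L}$ (the obstruction in $H^2(\mathcal{O})$ vanishing precisely because the classes remain of type $(1,1)$). The orthogonal complement $\Lambda^\perp$ in the K3 lattice has signature $(3,\cdot)$, so it contains classes of positive self-intersection; tilting the period within $\Lambda^\perp$ to become orthogonal to such a class produces projective fibers, and the resulting countably many loci accumulate at $0$. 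Hence $T$ carries a family $p\colon\mathcal{S}\to T$ with $\mathcal{S}_0=S$, with $\mathcal{L}_i,\mathcal{L}$ extending the given data, and with projective fibers accumulating at $0$. Forming $\mathcal{E}=\mathcal{L}_1\oplus\mathcal{L}_2\oplus\mathcal{L}_3$ and $\Pi\colon\mathbb{P}(\mathcal{E})\to\mathcal{S}$, each fiber over a projective $\mathcal{S}_t$ is projective.

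It then remains to propagate the conic. I would extend each $q_{ij}$ to a section $\mathcal{Q}_{ij}\in H^0(\mathcal{S},\mathcal{N}_{ij})$ restricting to $q_{ij}$ on $S$, where $\mathcal{N}_{ij}=\mathcal{L}_i^{-1}\otimes\mathcal{L}_j^{-1}\otimes\mathcal{L}$; assembling these into a symmetric form yields a section of $\mathcal{O}_{\mathbb{P}(\mathcal{E})}(2)\otimes\Pi^*\mathcal{L}$ whose zero divisor $\mathcal{X}\subset\mathbb{P}(\mathcal{E})$ satisfies $\mathcal{X}_0=X$. Smoothness of $\mathcal{X}$ and submersivity of $\mathcal{X}\to T$ along the central fiber are open conditions, automatic from smoothness of $X$ after shrinking $T$; and each $\mathcal{X}_t$ over a projective $\mathcal{S}_t$ is a closed submanifold of the projective $\mathbb{P}(\mathcal{E}_t)$, hence projective. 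Together with the accumulation of projective fibers this exhibits $\mathcal{X}\to T$ as an algebraic approximation of $X$.

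The main obstacle is the very last step: extending the given sections $q_{ij}$ over the family. Because $S$ is non-algebraic there is no ample class and hence no Kodaira-type vanishing available to kill the obstruction space $H^1(S,N_{ij})$, and a priori $h^0(\mathcal{S}_t,\mathcal{N}_{ij,t})$ could jump up at $t=0$, in which case the special section $q_{ij}$ would fail to extend and no nearby projective fiber would carry a conic bundle degenerating to $X$. The crux is therefore to show that, along the class-preserving directions, $h^0$ of each $N_{ij}$ is constant at $0$ — equivalently that $p_*\mathcal{N}_{ij}$ is locally free there — so that every section extends. This is precisely the type of statement furnished by the analysis of line bundles on K3 surfaces in section~\ref{sec:vector-bundles-k3}; one reduces to controlling the cohomology of the (necessarily non-positive) line bundles $N_{ij}$, using that effective classes of negative self-intersection are rigid and deform with $S$ as long as they stay of type $(1,1)$. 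Finally, I would record the degenerate case separately: when the discriminant is empty, $X\to S$ is a $\mathbb{P}_1$-bundle and the conclusion already follows from Theorem~\ref{thm:projr2b}, so the work is genuinely concentrated on conic bundles with non-trivial discriminant.
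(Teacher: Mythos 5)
Your strategy coincides with the paper's: split $E$ into line bundles, extend them over a class-preserving locus in the Kuranishi space of $S$, extend the defining section of the conic, and conclude by density of projective periods. But there is a genuine gap, located exactly at the step you yourself single out as the crux, and it comes from your choice of base: you take $T$ to be the locus where the period stays orthogonal only to the lattice $\Lambda$ spanned by $c_1(L_1),c_1(L_2),c_1(L_3),c_1(L)$. Over such a $T$ the constancy of $h^0$ of the bundles $N_{ij}$ is false in general, and the statement you appeal to from Section~\ref{sec:vector-bundles-k3} (Proposition~\ref{prop:k3coh}) does not apply: its hypothesis is that \emph{every} effective divisor on $S$ --- in particular every prime component of the zero divisor of $q_{ij}$ --- extends as a line bundle over the family, whereas over your $T$ only classes in the saturation of $\Lambda$ remain of type $(1,1)$. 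Concretely, suppose the divisor of $q_{ij}$ is $D_1+D_2$ with $D_1,D_2$ disjoint $(-2)$-curves whose individual classes do not lie in the saturation of $\Lambda$; this does occur for split conic bundles, e.g.\ $L_1=L_2=\mathcal{O}_S$, $L_3=\mathcal{O}_S(-D_1-D_2)$, for which $\Lambda$ is generated by $[D_1]+[D_2]$. Then on the very general fiber $\mathcal{S}_t$, $t\in T$, the N\'eron--Severi group is the saturation of $\Lambda$: it contains $\nu=[D_1]+[D_2]$ with $\nu^2=-4$ but no $(-2)$-class, and since every irreducible curve $C$ on a K3 surface satisfies $C^2\ge-2$ by adjunction, such a fiber carries no curves at all. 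Hence $h^0$ of the extended line bundle drops to $0$ and $q_{ij}$ does not extend --- effectivity of a $(1,1)$-class of square $\le-4$ is simply not deformation-invariant; only the individual $(-2)$-curves (and elliptic fiber classes) deform along with $S$, which is why preserving the four classes of $\Lambda$ is not enough.

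The repair is to shrink the base: take the locus orthogonal to \emph{all} of $NS(S)$, i.e.\ use Proposition~\ref{prop:k3torlb} as it stands. This locus still has dimension $h^{1,1}(S)-\rho(S)>0$ for non-algebraic $S$, so projective fibers still accumulate at $0$ (this is the Remark following Proposition~\ref{prop:k3torlb}; your lattice-theoretic density argument also survives), and now every line bundle on $S$ extends over the family, so Proposition~\ref{prop:k3coh} applies verbatim and yields an extension of the single section $s\in H^0(S^2E\otimes\det E^*)$ to a section of $S^2\mathcal{E}\otimes\det\mathcal{E}^*$, where $\mathcal{E}=\mathcal{L}_1\oplus\mathcal{L}_2\oplus\mathcal{L}_3$; with that one change your argument becomes precisely the paper's proof. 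Two smaller points: your initial reduction to non-algebraic $S$ is fine (and implicitly needed by the paper too, since Proposition~\ref{prop:k3coh} assumes non-algebraicity), but your closing reduction of the empty-discriminant case to Theorem~\ref{thm:projr2b} is both unnecessary --- the section argument covers that case uniformly --- and not quite justified, since a fiberwise smooth conic bundle is a $\mathbb{P}_1$-bundle but need not be the projectivization of a rank-two bundle unless the corresponding Brauer-type obstruction vanishes.
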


\noindent{\bf Acknowledgements.}
In large parts, this paper is a condensed version of my Ph.D.~thesis prepared at the University of Bayreuth~\cite{Schrack10}. I would like to thank my advisor Thomas Peternell for many fruitful discussions and suggestions.

The work was supported by DFG Forschergruppe 790 ``Classification of algebraic surfaces and compact complex manifolds.''

\section{The concept of algebraic approximability}
In this section, we first review some approximability results for elliptic surfaces we will use in the following. After that we investigate the behavior of algebraic approximability with respect to certain types of maps between complex manifolds.

We start with general remark concerning the algebraic approximation of \emph{Kähler} manifolds:
\begin{rem}
Let $\mathcal{S}\to T\ni 0$ be a deformation of a compact Kähler manifold~$\mathcal{S}_0$. Then for any $t\in T$ in a sufficiently small neighborghood of~$0$, the fiber $\mathcal{S}_t$ is also Kähler. Since a compact Kähler manifold is projective if and only if it is Moishezon, finding an approximation of~$\mathcal{S}_0$ by projective manifolds is equivalent to finding an approximation by Moishezon manifolds.
\end{rem}

\subsection{The elliptic surface case}
\begin{prop}\label{prop:elsurf}
Let $S$ be a compact Kähler surface with an elliptic fibration $r\colon S\to C$ over a smooth curve $C$. Then there exist a complex manifold $T\ni0$ and a deformation $\pi\colon\mathcal{S}\to T$ of $S=\mathcal{S}_0$ with the following properties:
\begin{enumerate}[(i)]
\item There is an elliptic fibration $R\colon\mathcal{S}\to C\times T$ with $\pr_2\circ R=\pi$ and $R|_{\mathcal{S}_0}=r$.
\item For every point $c\in C$ there exists an open neighborhood $U\subset C$ of $c$ such that $R^{-1}(U\times T)\cong r^{-1}(U)\times T$ over $C\times T$.
\item The set of points $t\in T$ where $\mathcal{S}_t$ is projective is dense in $T$.
\end{enumerate}
\end{prop}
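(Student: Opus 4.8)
The plan is to realise the deformation through Kodaira's description of elliptic surfaces as torsors over their associated Jacobian fibration, varying only the global ``twisting'' data while keeping the base $C$ and the local structure of all (possibly singular or multiple) fibres fixed; this rigidity of the local data is precisely what will force the local triviality in~(ii). We may assume the fibration relatively minimal, the general case following by blowing up the resulting family along the rigid, locally constant exceptional curves contained in fibres. I would first attach to $r\colon S\to C$ its basic elliptic surface $p\colon B\to C$, the Jacobian fibration carrying a section and sharing the functional and homological invariants of $S$. Then $S$ arises from $B$ by logarithmic transforms at the multiple fibres together with a twist by a class $\eta\in H^1(C,\mathcal{B})$, where $\mathcal{B}$ denotes the sheaf on $C$ of germs of holomorphic sections of $B$.

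The central device is the exponential-type short exact sequence of sheaves on $C$
\[
0\longrightarrow \mathcal{H}_{\mathbb{Z}}\longrightarrow \mathcal{L}\longrightarrow \mathcal{B}\longrightarrow 0,
\]
where $\mathcal{H}_{\mathbb{Z}}=R^1p_*\mathbb{Z}$ is the local system of fibre periods and $\mathcal{L}$ is the line bundle of relative Lie algebras, so that $\mathcal{L}\to\mathcal{B}$ is the fibrewise exponential. The induced cohomology sequence
\[
H^1(C,\mathcal{L})\xrightarrow{\ \alpha\ } H^1(C,\mathcal{B})\longrightarrow H^2(C,\mathcal{H}_{\mathbb{Z}})
\]
exhibits $\operatorname{im}\alpha\cong H^1(C,\mathcal{L})/\Lambda$ as a complex torus, with $\Lambda$ the image of $H^1(C,\mathcal{H}_{\mathbb{Z}})$. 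I would take $T$ to be a small neighbourhood of $0$ in the finite-dimensional vector space $H^1(C,\mathcal{L})$ and construct, over $C\times T$, the family $R\colon\mathcal{S}\to C\times T$ whose fibre over $t$ is the elliptic surface with twisting class $\eta+\alpha(t)$, the logarithmic-transform data being carried along unchanged. Since the torsor cocycles can be chosen to depend holomorphically on $t$, this is a genuine deformation with $\mathcal{S}_0\cong S$, $\pr_2\circ R=\pi$ and $R|_{\mathcal{S}_0}=r$, giving~(i). Because every $\mathcal{B}$-torsor is locally trivial over $C$, over a small $U$ all members of the family restrict to $B|_U\cong r^{-1}(U)$ compatibly in $t$, which yields the product decomposition in~(ii).

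For~(iii) I would invoke the standard projectivity criterion for elliptic surfaces: a torsor over $B$ is projective precisely when it admits a multisection, equivalently when its twisting class is a torsion element of $H^1(C,\mathcal{L})/\Lambda$. The torsion points of a complex torus are dense, so their preimages under the affine map $t\mapsto\eta+\alpha(t)$ are dense in $T$; shrinking $T$ so that all fibres remain Kähler (an open condition, by the Remark preceding the Proposition) and using that a Kähler surface is projective iff it is Moishezon, we conclude that $\{t : \mathcal{S}_t\text{ projective}\}$ is dense.

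The main obstacle I anticipate is not the density statement, which is formal once the torsion characterisation is available, but the \emph{globalisation in families}: turning the set-theoretic assignment $t\mapsto(\text{torsor with class }\eta+\alpha(t))$ into an honest proper holomorphic submersion over $C\times T$ with the prescribed central fibre. This requires the torsor transition data and the logarithmic-transform gluings to vary holomorphically in $t$ and to be compatible over overlaps, after which one must verify that the total space $\mathcal{S}$ is smooth and $R$ submersive -- essentially a relative version of Kodaira's torsor construction. A secondary point is to reconcile the analytic notion of projectivity used here with the torsion condition on the twisting class, for which I would rely on Kodaira's structure theory of elliptic surfaces.
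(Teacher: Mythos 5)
Your overall strategy is the right one: the paper itself proves this proposition purely by citation to exactly this circle of ideas (Kodaira's classification of elliptic fibrations as twisted forms of their Jacobian fibration, cf.~\cite{FM94}, or Buchdahl's argument in \cite{Buc08}), so reconstructing that theory is a legitimate route. But your proof has a genuine gap at precisely the step you dismiss as ``formal''. Density of the projective fibres is \emph{not} formal once the torsion criterion is available: it requires that the image $\Lambda$ of $H^1(C,\mathcal{H}_{\mathbb{Z}})$ in $H^1(C,\mathcal{L})$ spans $H^1(C,\mathcal{L})$ as a \emph{real} vector space, i.e.\ that $H^1(C,\mathcal{L})/\Lambda$ really is a compact complex torus. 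You assert this compactness in passing, but it is the crux of Kodaira's density theorem and it is exactly where the Kähler hypothesis must enter: for a non-Kähler elliptic surface ($b_1$ odd) the subgroup $\Lambda$ does not have full rank, torsion classes are not dense in the component of $\eta$, and indeed no algebraic approximation can exist, since projectivity of a nearby fibre would force $b_1$ to be even. Your argument nowhere uses the Kähler assumption in the density step (you invoke it only for openness of the Kähler condition and for Moishezon $\Rightarrow$ projective), so as written it would equally ``prove'' statement (iii) for Hopf or Kodaira surfaces, where it is false. A related point: you also need the image of $\eta$ in $H^2(C,\mathcal{H}_{\mathbb{Z}})$ to be torsion before any fibre of your family can be algebraic at all; this too is a consequence of Kählerness, not of the setup.

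The second gap concerns multiple fibres. A torsor under $\mathcal{B}$ never has multiple fibres (local triviality provides local sections, and a local section forces the fibre through it to be reduced), so your $S$ is a logarithmic transform of the torsor $S_\eta$, and you apply the criterion ``projective $\Leftrightarrow$ twisting class torsion'' to the torsor alone, with the log data ``carried along unchanged''. That criterion does not survive the logarithmic transform: $E\times\mathbb{P}_1$ is the trivial torsor (class $0$, certainly torsion), yet suitable logarithmic transforms of it are Hopf surfaces, which are not even Kähler, let alone projective. So torsion of $\eta+\alpha(t)$ does not by itself make $\mathcal{S}_t$ projective; one needs Kodaira's refined classification in which the multiple-fibre data is itself part of the characteristic class (a class in $H^1$ of a larger sheaf allowing the log-transform gluings), the torsion criterion for that combined class, and the fact that Kählerness of $S$ forces the logarithmic contribution to be torsion. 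Both repairs are carried out in \cite{FM94}, which is what the paper's proof points to; with them in place, your construction --- including the relative version of the torsor gluing, which you correctly flag as requiring care --- does yield the proposition.
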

\begin{proof}
This proposition is the essence of an intricate study of elliptic fibrations first carried out by Kodaira (cf.~\cite{FM94} for details). A much more elegant proof has been given by Buchdahl in~\cite{Buc08}.
\end{proof}

\subsection{Some general criteria}
The following criterion follows easily from Kodaira's result about stability of subspaces:
\begin{prop}
Let $X\subset Y$ be an inclusion of compact complex manifolds. If $Y$ is algebraically approximable and $H^1(N_{X|Y})=0$, then also $X$ is algebraically approximable.
\end{prop}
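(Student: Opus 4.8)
The plan is to lift an algebraic approximation of $Y$ to one of $X$ by deforming the submanifold $X$ simultaneously with the ambient space. First I would fix an algebraic approximation $\pi\colon\mathcal{Y}\to T$ of $Y$, so that $\mathcal{Y}_0\cong Y$ and there is a sequence $(t_k)_{k\in\mathbb{N}}\subset T$ with $t_k\to0$ and each $\mathcal{Y}_{t_k}$ projective. Identifying $\mathcal{Y}_0$ with $Y$, I view $X$ as a compact complex submanifold of the central fibre $\mathcal{Y}_0$, and the goal is to produce a family $\mathcal{X}\subset\mathcal{Y}$ of submanifolds of the fibres $\mathcal{Y}_t$ restricting to $X$ over $0$.

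The heart of the argument is Kodaira's stability theorem in its relative form. Classically the vanishing $H^1(N_{X|Y})=0$ makes the Douady (Hilbert) space of $Y$ smooth at the point $[X]$, with tangent space $H^0(N_{X|Y})$ and obstructions in $H^1(N_{X|Y})$. The same vanishing serves as the obstruction-space hypothesis for the \emph{relative} Douady space $D(\mathcal{Y}/T)\to T$, whose fibre over $0$ is the Douady space of $Y$. Concretely, I would argue that $D(\mathcal{Y}/T)$ is smooth at $[X]$ and that the projection to $T$ is submersive there, so that $[X]$ extends to a holomorphic section over a neighbourhood of $0$. Shrinking $T$ to this neighbourhood, the section produces a family $\mathcal{X}\subset\mathcal{Y}$ with $\mathcal{X}_0=X$ and $\mathcal{X}_t\subset\mathcal{Y}_t$ for every $t$, such that $\mathcal{X}\to T$ is a proper holomorphic submersion, i.e.\ a deformation of $X$; properness is inherited from $\mathcal{Y}\to T$ since $\mathcal{X}$ is closed in $\mathcal{Y}$. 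This step—upgrading the classical statement, in which the submanifold moves inside a \emph{fixed} ambient space, to the version where the ambient space is deformed at the same time—is where essentially all the content lies and where the hypothesis $H^1(N_{X|Y})=0$ is used; it is also the point I would double-check most carefully, making sure $\mathcal{X}\to T$ is genuinely a submersion onto a full neighbourhood of $0$ rather than merely flat.

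It then remains to verify the projectivity condition, which is routine. For $k$ large enough, $t_k$ lies in the shrunk base, and $\mathcal{X}_{t_k}$ is a compact complex submanifold of the projective manifold $\mathcal{Y}_{t_k}$. Embedding $\mathcal{Y}_{t_k}$ into some $\mathbb{P}^N$ and applying Chow's theorem shows that $\mathcal{X}_{t_k}$ is a projective manifold. Hence $\mathcal{X}\to T$ fulfils both conditions of the definition of an algebraic approximation and exhibits $X$ as algebraically approximable, which proves the claim.
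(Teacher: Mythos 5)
Your proof is correct and takes essentially the same route as the paper, which offers no written proof at all but simply remarks that the proposition ``follows easily from Kodaira's result about stability of subspaces.'' The step you flag as the one deserving the most care---passing from a fixed ambient space to a deforming one---is in fact exactly what Kodaira's stability theorem asserts in its original form (given any deformation $\mathcal{Y}\to T$ of $Y$ and a compact submanifold $X\subset Y$ with $H^1(N_{X|Y})=0$, the submanifold extends to a family $\mathcal{X}_t\subset\mathcal{Y}_t$ over a neighbourhood of $0$), so no upgrading is needed and your Douady-space argument is a standard way of recovering precisely that statement.
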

As étale covers are topological objects, an algebraic approximation can obviously be lifted to finite étale covers:
\begin{prop}
Let $X$ be a compact complex manifold which is algebraically approximable. Then any finite étale cover of $X$ is also algebraically approximable.
\end{prop}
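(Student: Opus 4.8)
The plan is to lift the given algebraic approximation $\pi\colon\mathcal{X}\to T$ of $X$ to a finite étale cover of the total space $\mathcal{X}$, exploiting that the central fibre becomes a deformation retract of $\mathcal{X}$ once the base is shrunk. First I would replace $T$ by a contractible (e.g.\ polydisc) neighbourhood of $0$; this does not affect the conclusion, since a tail of the sequence $(t_k)$ survives. By Ehresmann's theorem the proper submersion $\pi$ is then a locally trivial $C^\infty$ fibre bundle over the contractible base $T$, hence topologically trivial, so the inclusion of the central fibre $\mathcal{X}_0\cong X$ into $\mathcal{X}$ is a homotopy equivalence; in particular it induces an isomorphism $\pi_1(X)\xrightarrow{\sim}\pi_1(\mathcal{X})$.

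Next, given a finite étale cover $p\colon\tilde X\to X$ corresponding to a finite-index subgroup $H\le\pi_1(X)$, I would transport $H$ through this isomorphism to obtain a finite topological covering $\mathcal{P}\colon\tilde{\mathcal{X}}\to\mathcal{X}$. Since a covering space of a complex manifold carries a unique complex structure making the projection a local biholomorphism, $\tilde{\mathcal{X}}$ is a complex manifold and $\mathcal{P}$ is holomorphic and étale. The composition $\tilde\pi:=\pi\circ\mathcal{P}\colon\tilde{\mathcal{X}}\to T$ is then a holomorphic submersion (a submersion composed with a local biholomorphism) which is proper (a composition of the proper maps $\mathcal{P}$ and $\pi$). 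Because $\mathcal{X}_0\hookrightarrow\mathcal{X}$ is a deformation retract, restriction of coverings to $\mathcal{X}_0$ is an equivalence of categories, so the central fibre $\tilde{\mathcal{X}}_0=\mathcal{P}^{-1}(\mathcal{X}_0)$ is precisely the cover of $\mathcal{X}_0\cong X$ attached to $H$, i.e.\ $\tilde{\mathcal{X}}_0\cong\tilde X$.

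It remains to verify the projectivity condition. For each $k$ the fibre $\tilde{\mathcal{X}}_{t_k}=\mathcal{P}^{-1}(\mathcal{X}_{t_k})$ is a finite étale cover of the projective manifold $\mathcal{X}_{t_k}$, and pulling back an ample line bundle along the finite map $\mathcal{P}$ exhibits $\tilde{\mathcal{X}}_{t_k}$ as projective. Hence $\tilde\pi\colon\tilde{\mathcal{X}}\to T$ satisfies both conditions of the definition and is an algebraic approximation of $\tilde X$.

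The only point demanding genuine care—the main obstacle—is the identification of the central fibre: one must be sure that the subgroup $H\le\pi_1(X)$ reproduces the cover $\tilde X$ after restriction from $\mathcal{X}$ back to $\mathcal{X}_0$. This is exactly what the homotopy equivalence $\mathcal{X}_0\hookrightarrow\mathcal{X}$ (via Ehresmann over a contractible base) guarantees, and it is the reason shrinking $T$ at the outset is essential. Everything else in the argument is formal, which is why the claim was described as ``obvious'' in the text.
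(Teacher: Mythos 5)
Your proposal is correct and follows exactly the route the paper has in mind: the paper offers no written proof beyond the remark that étale covers are topological objects, and your argument (shrinking $T$, Ehresmann triviality, the isomorphism $\pi_1(X)\cong\pi_1(\mathcal{X})$, lifting the cover, and pulling back an ample bundle on the projective fibers) is precisely the careful implementation of that remark. No gaps; the identification of the central fibre and the projectivity of finite covers of projective manifolds are handled correctly.
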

The following is slightly more complicated:
\begin{prop}
Let $f\colon X\to Y$ be a holomorphic surjection from a compact Kähler manifold $X$ to some manifold $Y$. Assume that $f$ has connected fibers and that for general $y\in Y$, $X_y$ is projective with $H^1(\mathcal{O}_{X_y})=0$. Suppose furthermore that $\Ext^2(\Omega^1_{X/Y},\mathcal{O}_X)=0$. Then if $Y$ is algebraically approximable, $X$ is also algebraically approximable.
\end{prop}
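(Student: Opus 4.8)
The plan is to use the given algebraic approximation of $Y$ as the base of a relative deformation of $f$ and to transport projectivity fiberwise. First I would fix an algebraic approximation $\pi\colon\mathcal{Y}\to T$ of $Y$, with a sequence $(t_k)_{k\in\mathbb{N}}\to0$ for which $\mathcal{Y}_{t_k}$ is projective, and aim to produce a deformation $\Pi\colon\mathcal{X}\to T$ of $X=\mathcal{X}_0$ together with a $T$-morphism $F\colon\mathcal{X}\to\mathcal{Y}$ (so that $\pi\circ F=\Pi$) restricting to $f$ over $0$. The point is that the projective fibers $\mathcal{Y}_{t_k}$ will then force $\mathcal{X}_{t_k}$ to be projective, exhibiting $\mathcal{X}\to T$ as an algebraic approximation of $X$.

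Second, to build $F$ I would run relative deformation theory for the morphism $f$ over the prescribed deformation of its target. Applying $\Hom(-,\mathcal{O}_X)$ to the sequence $f^*\Omega^1_Y\to\Omega^1_X\to\Omega^1_{X/Y}\to0$ yields the long exact sequence
\[
\cdots\to H^1(T_X)\to H^1(X,f^*T_Y)\xrightarrow{\ \delta\ }\Ext^2(\Omega^1_{X/Y},\mathcal{O}_X)\to\cdots,
\]
in which $\Ext^2(\Omega^1_{X/Y},\mathcal{O}_X)$ is precisely the space receiving the obstruction $\delta$ to lifting a first-order deformation of $Y$ to a first-order deformation of $X$ compatible with $f$; the same group houses the higher-order obstructions in Horikawa's theory of deformations of holomorphic maps. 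As this group vanishes by hypothesis, the forgetful map from deformations of $f$ to deformations of $Y$ is smooth, so after possibly shrinking $T$ (harmless, since $t_k\to0$) the family $\mathcal{Y}\to T$ lifts to the desired $F\colon\mathcal{X}\to\mathcal{Y}$ over $T$. I expect this to be the main obstacle: passing from the vanishing of the obstruction space (formal unobstructedness) to a genuine convergent analytic family over all of $T$, compatible with the given $\mathcal{Y}\to T$ and carrying the morphism $F$, requires invoking the appropriate relative existence and completeness theorem rather than a bare cohomological count.

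Finally, I would verify projectivity of the distinguished fibers. Each $\mathcal{X}_{t_k}$ is a small deformation of the compact Kähler manifold $X$, hence Kähler, so by the Remark it suffices to polarize it. Since $f$ has projective general fiber it carries an $f$-ample line bundle $\mathcal{L}$; the hypothesis $H^1(\mathcal{O}_{X_y})=0$ says that line bundles on the fibers are infinitesimally rigid, so the relative Picard is étale over the base and $\mathcal{L}$ extends, after shrinking $T$, to an $F$-ample line bundle $\tilde{\mathcal{L}}$ on $\mathcal{X}$ (using openness of ampleness). Fixing an ample $A$ on the projective variety $\mathcal{Y}_{t_k}$, the bundle $\tilde{\mathcal{L}}_{t_k}^{\otimes m}\otimes F_{t_k}^*A$ is ample for $m\gg0$, so $\mathcal{X}_{t_k}$ is projective; here the connectedness of the fibers of $f$ is what guarantees that $F_{t_k}$ remains a fibration of the expected relative dimension, so that this construction goes through. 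Therefore the $\mathcal{X}_{t_k}$ accumulate at $0$ as projective fibers, and $\mathcal{X}\to T$ is an algebraic approximation of $X$.
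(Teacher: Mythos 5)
Your construction of the family $F\colon\mathcal{X}\to\mathcal{Y}$ over $T$ is exactly the paper's first step: the paper also quotes Horikawa's existence theorem \cite[Theorem~6.1]{Hor74}, for which the hypothesis $\Ext^2(\Omega^1_{X/Y},\mathcal{O}_X)=0$ is precisely the vanishing of the obstruction space you identify in the long exact sequence; the passage from formal unobstructedness to a genuine convergent family over $T$, which you correctly flag as delicate, is what that theorem provides.

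The gap is in your final step. You assert that ``since $f$ has projective general fiber it carries an $f$-ample line bundle $\mathcal{L}$.'' This does not follow from the hypotheses: projectivity of the general fiber gives an ample line bundle on each such fiber, but there is no reason these should be restrictions of a single line bundle on $X$. The assumption $H^1(\mathcal{O}_{X_y})=0$ only kills the continuous part of $\mathrm{Pic}(X_y)$, i.e.\ it makes the relative Picard space étale over (part of) the base; an étale space need not have a section. Concretely, the monodromy of $R^2f_*\mathbb{Z}$ can permute the ample classes of the fibers, a monodromy-invariant class need not lift to $H^2(X,\mathbb{Z})$, and even a lift need not be of type $(1,1)$ on $X$, because $H^2(\mathcal{O}_{X_y})$ --- as opposed to $H^1$ --- is not assumed to vanish (think of K3 fibers). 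Shrinking $T$ cannot repair this, since these obstructions live in the compact direction $\mathcal{Y}$, not in $T$; and in fact, given the rest of your argument, the existence of such an $\mathcal{L}$ (or $\tilde{\mathcal{L}}$) is essentially equivalent to the projectivity you are trying to prove, so the reasoning is circular at this point. This is exactly the step that the paper does not attempt directly but delegates to a nontrivial external result, \cite[Proposition~7]{Fuj83a}: a fiber space with compact Kähler total space and projective base, whose general fiber is projective with $H^1(\mathcal{O})=0$, has projective total space; that statement is not a formal consequence of the rigidity of line bundles on fibers. (A further minor slip: even granted $\tilde{\mathcal{L}}$, the bundle one should take is $\tilde{\mathcal{L}}_{t_k}\otimes F_{t_k}^*A^{\otimes m}$ for $m\gg0$, not $\tilde{\mathcal{L}}_{t_k}^{\otimes m}\otimes F_{t_k}^*A$; raising the relatively ample factor to a high power produces no positivity along the base.)
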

\begin{proof}
Let $\psi\colon\mathcal{Y}\to T\ni0$ be an algebraic approximation of $Y=\mathcal{Y}_0$. By \cite[Theorem~6.1]{Hor74}, the assumption $\Ext^2(\Omega^1_{X/Y},\mathcal{O}_X)=0$ implies (after shrinking $T$ sufficiently) that there exist a deformation $\pi\colon\mathcal{X}\to T\ni 0$ of $X=\mathcal{X}_0$ and a holomorphic map $F\colon\mathcal{X}\to\mathcal{Y}$ over $T$ such that $F|_{\mathcal{X}_0}=f$. It is easy to see that $F$ has connected fibers and $H^1(\mathcal{O}_{\mathcal{X}_y})=0$ for general $y\in\mathcal{Y}$. Since $X$ is Kähler, also $\mathcal{X}_t$ is Kähler for sufficiently small~$t$. Now if $t\in T$ is such that $\mathcal{Y}_t$ is projective, $\mathcal{X}_t$ is also projective by~\cite[Proposition~7]{Fuj83a}.
\end{proof}
We also state the following property:
\begin{prop}
Let $X$ be a compact Kähler manifold, $\hat{X}\to X$ the blow-up of $X$ along a compact submanifold $Y\subset X$ of codimension $\ge2$. Then $\hat{X}$ is algebraically approximable if and only if there exists a $Y$-stable algebraic approximation of $X$.
\end{prop}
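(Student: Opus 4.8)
The plan is to prove the two implications separately; the backward direction is essentially formal, while the forward direction carries all the content and rests on the stability theory for monoidal transformations. Throughout, by a \emph{$Y$-stable} algebraic approximation of $X$ I mean an algebraic approximation $\pi\colon\mathcal{X}\to T$ of $X=\mathcal{X}_0$ in which $Y$ extends to a relative submanifold $\mathcal{Y}\subset\mathcal{X}$, i.e.\ $\pi|_{\mathcal{Y}}\colon\mathcal{Y}\to T$ is a proper submersion with $\mathcal{Y}_0=Y$.

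For the implication ``$Y$-stable approximation of $X$ $\Rightarrow$ $\hat{X}$ approximable'', I would start from such an approximation $\pi\colon\mathcal{X}\to T$ together with its relative centre $\mathcal{Y}\subset\mathcal{X}$ and form the relative blow-up $\sigma\colon\hat{\mathcal{X}}:=\operatorname{Bl}_{\mathcal{Y}}\mathcal{X}\to\mathcal{X}$. Since $\mathcal{X}$ and $\mathcal{Y}$ are both smooth over $T$ and $\mathcal{Y}$ has codimension $\ge2$ in $\mathcal{X}$, the composite $\hat{\mathcal{X}}\to T$ is again a proper holomorphic submersion whose fibre over $t$ is $\operatorname{Bl}_{\mathcal{Y}_t}\mathcal{X}_t$; in particular the central fibre is $\operatorname{Bl}_Y X=\hat{X}$. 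For every $t$ at which $\mathcal{X}_t$ is projective, the blow-up of the projective manifold $\mathcal{X}_t$ along the smooth centre $\mathcal{Y}_t$ is again projective, so $\hat{\mathcal{X}}\to T$ is an algebraic approximation of $\hat{X}$.

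For the converse, let $\hat{\pi}\colon\hat{\mathcal{X}}\to T$ be an algebraic approximation of $\hat{X}=\hat{\mathcal{X}}_0$. The exceptional divisor $E\subset\hat{X}$ is a $\mathbb{P}_{c-1}$-bundle $p\colon E\to Y$, where $c=\operatorname{codim}_X Y\ge2$, whose normal bundle $N_{E|\hat{X}}=\mathcal{O}_E(E)$ restricts to $\mathcal{O}_{\mathbb{P}_{c-1}}(-1)$ on every fibre. As $H^q(\mathbb{P}_{c-1},\mathcal{O}(-1))=0$ for all $q$, the Leray spectral sequence gives $R^q p_*N_{E|\hat{X}}=0$ for all $q$, whence $H^1(E,N_{E|\hat{X}})=0$; by Kodaira's stability theorem $E$ deforms with $\hat{X}$ to a relative divisor $\mathcal{E}\subset\hat{\mathcal{X}}$ over $T$. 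I would then invoke the stability of blow-downs (Fujiki--Nakano): after shrinking $T$, there is a proper bimeromorphic morphism $\Phi\colon\hat{\mathcal{X}}\to\mathcal{X}$ over $T$ contracting each $\mathcal{E}_t$ along its projective-bundle structure, so that $\mathcal{X}\to T$ is a deformation of $X=\mathcal{X}_0$, the image $\mathcal{Y}:=\Phi(\mathcal{E})$ is a relative submanifold with $\mathcal{Y}_0=Y$, and $\hat{\mathcal{X}}_t=\operatorname{Bl}_{\mathcal{Y}_t}\mathcal{X}_t$. For each $t$ near $0$ at which $\hat{\mathcal{X}}_t$ is projective, $\mathcal{X}_t$ is bimeromorphic to a projective manifold, hence Moishezon, and is Kähler (small deformations of the Kähler manifold $X$ being Kähler), so by the equivalence Kähler $+$ Moishezon $=$ projective it is projective. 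As the sequence $(t_k)$ converging to $0$ eventually lies in any shrunken neighbourhood, $\mathcal{X}\to T$ is an algebraic approximation of $X$, and it is $Y$-stable via $\mathcal{Y}$.

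The main obstacle is the forward direction, and specifically the passage from the deformed divisor $\mathcal{E}$ to a simultaneous contraction $\Phi\colon\hat{\mathcal{X}}\to\mathcal{X}$ over the whole base. Stability of $E$ alone does not produce the relative blow-down: one needs that being the exceptional divisor of a monoidal transformation with smooth centre (a $\mathbb{P}_{c-1}$-bundle with fibrewise normal bundle $\mathcal{O}(-1)$) is inherited by every nearby fibre, and that the fibrewise contractions assemble into a single holomorphic map over $T$. This is precisely the Fujiki--Nakano stability theorem for monoidal transformations, whose hypotheses reduce to the normal-bundle computation above; verifying that this theorem applies is the key technical point, and everything else is routine.
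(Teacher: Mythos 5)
The paper itself states this proposition without proof (it is one of the results imported from the author's thesis), so there is no argument in the text to measure yours against; I can only judge the proposal on its own merits. Your backward direction is complete and correct: the relative blow-up of a smooth family along a relative submanifold is again a smooth family whose fibres are the fibrewise blow-ups, and a blow-up of a projective manifold along a smooth centre is projective. The forward direction also starts correctly: $N_{E|\hat{X}}$ restricts to $\mathcal{O}(-1)$ on the $\mathbb{P}_{c-1}$-fibres, all cohomology of $\mathcal{O}_{\mathbb{P}_{c-1}}(-1)$ vanishes, so $H^1(E,N_{E|\hat{X}})=0$ and Kodaira stability extends $E$ to a relative divisor $\mathcal{E}\subset\hat{\mathcal{X}}$; and the closing step (Moishezon $+$ Kähler $\Rightarrow$ projective on the nearby fibres) is fine.

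The gap sits exactly at the step you yourself single out as the key one. The Fujiki--Nakano theorem (the ``Supplement to `On the inverse of monoidal transformation'\,'') is an \emph{absolute} contraction criterion: for a single complex manifold containing a divisor that is a $\mathbb{P}_{c-1}$-bundle with normal bundle $\mathcal{O}(-1)$ on every fibre, it produces the blow-down. It is not a deformation-stability statement, and its hypotheses do \emph{not} ``reduce to the normal-bundle computation above'': that computation lives entirely on the central fibre, whereas to invoke Fujiki--Nakano on a nearby fibre $\hat{\mathcal{X}}_t$ (or on the total space $\hat{\mathcal{X}}$ with the divisor $\mathcal{E}$) you must first prove that $\mathcal{E}_t$ is again a $\mathbb{P}_{c-1}$-bundle over a deformation of $Y$ with fibrewise normal bundle $\mathcal{O}(-1)$, and that the fibrewise contractions depend holomorphically on $t$. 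Neither follows from Kodaira stability of $E$ alone. This persistence statement is provable, but it is real content: a fibre $F\cong\mathbb{P}_{c-1}$ of $p$ has $N_{F|\mathcal{E}}$ an extension of trivial bundles (its normal bundle in $E$ is trivial, and $N_{E|\mathcal{E}}$ is trivial since $E$ is a fibre of $\mathcal{E}\to T$), so $H^1(F,N_{F|\mathcal{E}})=0$; Kodaira's completeness/Douady-space theory then yields an unobstructed $(\dim Y+\dim T)$-dimensional family of deformations of $F$ filling a neighbourhood of $E$ in $\mathcal{E}$, rigidity of projective space shows the deformed fibres are again $\mathbb{P}_{c-1}$, and one checks that these fibres define a submersion $\mathcal{E}\to\mathcal{Y}$ over $T$; the $\mathcal{O}(-1)$-condition on its fibres then follows by invariance of degree, after which Fujiki--Nakano applied to $(\hat{\mathcal{X}},\mathcal{E})$ (its construction is local over the base of the $\mathbb{P}$-bundle) gives the relative blow-down $\Phi$. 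Without some such argument the central implication is unproved; with it, your outline becomes a complete proof.
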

One can generalize the concept of algebraic approximability to arbitrary complex spaces. By using generalizations of Horikawa's deformation theory from~\cite{Ran91}, one can for instance prove the following result:
\begin{prop}
Let $X$ be a three-dimensional normal complex variety having at most terminal singularities and $f\colon X\dashrightarrow X'$ a composition of Mori contractions and flips. Then, if $X$ is algebraically approximable, so is $X'$.
\end{prop}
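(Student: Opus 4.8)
The plan is to reduce the statement to a single elementary step and then to deform that step in a family. Writing $f = f_N \circ \dots \circ f_1$ as a composition of Mori contractions and flips, it suffices by induction on $N$ to treat the case where $f$ is either a single Mori contraction $c\colon X\to X'$ or a single flip $X\dashrightarrow X^+$: since terminal singularities are preserved under both operations, the one-step result applies to each intermediate variety, and composing the resulting approximations yields the general case.

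Suppose first that $f = c\colon X\to X'$ is a Mori contraction, and let $\pi\colon\mathcal{X}\to T\ni0$ be an algebraic approximation of $X=\mathcal{X}_0$ in the generalized sense for complex spaces. The central idea is to propagate $c$ over $T$: using Ran's generalization~\cite{Ran91} of Horikawa's deformation theory of holomorphic maps, I would show that after shrinking $T$ the contraction extends to a relative contraction $\mathcal{C}\colon\mathcal{X}\to\mathcal{X}'$ over $T$ with $\mathcal{C}|_{\mathcal{X}_0}=c$ and $\mathcal{X}'_0=X'$. The obstruction to deforming the pair $(X,c)$ lies in a relative $\Ext$-group controlled by the exceptional locus of $c$, and its vanishing is forced by the defining negativity of the extremal ray together with the terminal (hence rational) singularities, which guarantee $R^ic_*\mathcal{O}_X=0$ for $i>0$ and the rigidity of the fibers needed to build $\mathcal{X}'$ as a relative $\mathrm{Proj}$. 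Then $\pi'\colon\mathcal{X}'\to T$ is a deformation of $X'$.

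The decisive point is that the relative contraction $\mathcal{C}$ restricts on each fiber to a Mori contraction of $\mathcal{X}_t$. Hence for every $t_k$ in the approximating sequence, where $\mathcal{X}_{t_k}$ is projective, the fiber $\mathcal{X}'_{t_k}=\mathcal{C}(\mathcal{X}_{t_k})$ is obtained from a projective variety by contracting an extremal ray and is therefore again projective, so $\pi'$ is an algebraic approximation of $X'$. For a flip $X\dashrightarrow X^+$ I would factor it as $X\xrightarrow{c}Z\xleftarrow{c^+}X^+$ with $c$ the flipping contraction, first deform $c$ to a relative small contraction $\mathcal{X}\to\mathcal{Z}$ over $T$ exactly as above, and then form the relative flip $\mathcal{X}^+\to\mathcal{Z}$, whose central fiber is $c^+$. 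On each projective fiber $\mathcal{X}_{t_k}$ the relative flip restricts to the ordinary flip, and since flips preserve projectivity, $\mathcal{X}^+_{t_k}$ is projective; thus $\mathcal{X}^+\to T$ is an algebraic approximation of $X^+$.

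The main obstacle is precisely this family-theoretic input: proving that the elementary steps deform over the full base $T$ with the correct central fiber and, crucially, that the relative constructions are compatible with restriction to fibers. For the contraction this demands a careful analysis of the obstruction spaces provided by Ran's theory and the flatness of $\mathcal{C}_*\mathcal{O}_{\mathcal{X}}$; for the flip it requires deformation invariance of three-dimensional flips, so that the relative flip genuinely restricts fiberwise to the flip of each $\mathcal{X}_t$. Once this fiberwise compatibility is established, the preservation of projectivity under contractions and flips makes the transfer of the approximating sequence $(t_k)$ immediate.
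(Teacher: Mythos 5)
Your overall strategy --- reduce to a single contraction or flip and propagate that step over the base of an algebraic approximation via Ran's generalization of Horikawa's theory --- is exactly the route the paper intends (the paper offers no detailed proof, only the pointer to \cite{Ran91}). The contraction half is essentially workable: for a Mori contraction $c\colon X\to X'$ one has $c_*\mathcal{O}_X=\mathcal{O}_{X'}$ and $R^ic_*\mathcal{O}_X=0$ for $i>0$, so the blow-down theorem applies and yields $\mathcal{X}'\to T$ and $\mathcal{C}\colon\mathcal{X}\to\mathcal{X}'$ extending $c$. (A small but real inaccuracy: the vanishing of $R^ic_*\mathcal{O}_X$ is \emph{not} a consequence of terminal-hence-rational singularities; it comes from relative Kawamata--Viehweg vanishing applied to the $c$-ample divisor $-K_X$. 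Rational singularities give vanishing for resolutions, not for arbitrary contractions.)

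Two steps, however, are genuine gaps. First, your transfer of projectivity rests on the claim that $\mathcal{C}_{t}\colon\mathcal{X}_t\to\mathcal{X}'_t$ is again a contraction of a $K$-negative \emph{extremal ray} for $t\ne0$. What deformation theory actually gives you is a proper surjection with connected fibers such that (by openness of relative ampleness, after shrinking $T$) $-K_{\mathcal{X}_t}$ is $\mathcal{C}_t$-ample; extremality, i.e.\ relative Picard number one, is not preserved in general --- the contracted curves can break up into several non-proportional classes which need not span a face of the cone of curves of $\mathcal{X}_t$ --- and without a supporting nef line bundle the image of a projective variety under an analytic contraction can perfectly well be non-projective (Moishezon). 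So the sentence ``contracting an extremal ray preserves projectivity'' does not apply as stated, and this is the step that would fail. The economical repair is the one suggested by the paper's own remark on Moishezon versus projective: formulate the generalized notion of algebraic approximability for complex spaces with \emph{Moishezon} fibers. Then no extremality is needed: the image of a Moishezon space under a proper holomorphic surjection is Moishezon, so $\mathcal{X}'_{t_k}$ is Moishezon, and in the flip case $\mathcal{X}^+_{t_k}$ is bimeromorphic to $\mathcal{Z}_{t_k}$, hence Moishezon as well. Second, your flip case hinges on the relative flip existing and restricting fiberwise to the flip of each $\mathcal{X}_t$; you correctly identify this as the ``main obstacle'' but leave it unresolved. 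This is precisely the theorem of Koll\'ar and Mori that three-dimensional flips exist in families and commute with base change (\emph{Classification of three-dimensional flips}, J.~Amer.~Math.~Soc.~5 (1992)); it is far too deep to enter as an unproved technical step and must be invoked explicitly. With the Moishezon formulation and the Koll\'ar--Mori citation in place, your argument closes up.
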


\section{Conic bundles: discriminant loci and deformations}
\label{sec:conic-bundl-discr}
In this section, we want to investigate the geometry of discriminant loci of conic bundles over non-algebraic surfaces: Let $f\colon X\to S$ be a conic bundle over a compact surface $S$ (i.e.~$X$ is a complex manifold and every fiber of $f$ is isomorphic to a curve of degree $2$ in $\mathbb{P}_2$). If we let $E:=f_*{K_{X/S}^*}$, then $X$ is a divisor on $\mathbb{P}(E)$ inside the linear system $\lvert\mathcal{O}_{\mathbb{P}(E)}(2)\otimes\pi^*\det E^*\rvert$, where $\pi\colon\mathbb{P}(E)\to S$ is the natural projection. Thus $X$ is given by a section $\sigma\in H^0(S^2E\otimes\det E^*)$. Via the canonical embedding
\[S^2E\otimes\det E^*\subset E\otimes E\otimes\det E^*\cong\mathcal{H}\mathit{om}_{\mathcal{O}_S}(E^*,E\otimes\det E^*),\]
the section $\sigma$ induces a non-zero section
\[\det\sigma\in\mathrm{Hom}(\det E^*,\det (E\otimes\det E^*))\cong H^0(\det E^*),\]
which defines a divisor $\Delta_f$ on $S$, the so-called \emph{discriminant locus} of $f$. Obviously, $\Delta_f$ consists of those points $s\in S$ for which the fiber $X_s$ is a singular conic.

The discriminant locus has an interesting property:
\begin{prop}\label{prop:pdisc}
For any conic bundle $f\colon X\to S$ over a compact surface $S$ with $\rho(X)=\rho(S)+1$, the discriminant locus $\Delta_f$ is a reduced divisor with the property that any smooth rational component of $\Delta_f$ meets other components of $\Delta_f$ in at least two points.
\end{prop}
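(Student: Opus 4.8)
The plan is to combine a local analysis of the conic bundle near $\Delta_f$ with a global argument, encoded in a double cover of $\Delta_f$, into which the Picard-number hypothesis feeds. Working locally, I would write the conic bundle as $\{v^{T}M(s)v=0\}\subset\mathbb{P}_2$, where $M$ is a symmetric $3\times 3$ matrix of holomorphic functions representing the section $\sigma$, so that $\Delta_f=\{\det M=0\}$ (matching $\det\sigma$). The rank of $M(s)$ distinguishes the degeneration type of the fibre $X_s$: rank three gives a smooth conic, rank two a pair of distinct lines, and rank one a double line.

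First I would establish reducedness. The claim is that $X$ is smooth along the fibre over a general point of a component $D\subseteq\Delta_f$ if and only if $\det M$ vanishes to order one along $D$. I would verify this by a direct computation at the vertex of the general singular fibre: choosing coordinates so that this fibre is $\{x^{2}=0\}$ (rank one) or $\{x^{2}+y^{2}=0\}$ (rank two), one differentiates the defining equation $v^{T}Mv$ and finds that all partials at the vertex vanish precisely when $\det M$ has vanishing order $\ge 2$ along $D$. Since $X$ is a manifold, every component of $\Delta_f$ must occur with multiplicity one, so $\Delta_f$ is reduced; moreover the general fibre over each component is then a pair of distinct lines. The same computation yields the crucial local fact that a rank-one fibre can occur over a point $p$ only if $\det M$ vanishes to order $\ge 2$ at $p$, hence only at a \emph{singular} point of the reduced divisor $\Delta_f$.

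Next I would form the double cover $\nu\colon\widetilde{\Delta}\to\Delta_f$ whose fibre over $s$ parametrises the two lines composing the singular conic $X_s$; it is étale over the rank-two locus and ramified exactly over the rank-one fibres. Fix a smooth rational component $D\cong\mathbb{P}_1$ and let $\widetilde{D}\to D$ be the (normalised) restricted double cover. Here the hypothesis $\rho(X)=\rho(S)+1$ enters: if $\widetilde{D}\to D$ were disconnected, its two sheets would single out one line in each singular fibre over $D$, sweeping out a divisor $Z\subset X$ with $f(Z)=D$. Restricting to a general fibre of $f$ forces the coefficient of the relative hyperplane class $\xi=c_1(\mathcal{O}_{\mathbb{P}(E)}(1)|_X)$ in $[Z]$ to vanish, and restricting to a general singular fibre over $D$ (where $Z$ meets the fibre in a single line) then shows $[Z]\notin f^{*}\operatorname{Pic}(S)+\mathbb{Z}\,\xi$, contradicting $\rho(X)=\rho(S)+1$. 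Hence $\widetilde{D}\to D$ is connected. As $D\cong\mathbb{P}_1$ is simply connected, it admits no connected \emph{étale} double cover, so $\widetilde{D}\to D$ is ramified, and Riemann–Hurwitz gives $2g(\widetilde{D})+2\ge 2$ branch points. Each branch point lies over a rank-one fibre, hence over a singular point of $\Delta_f$ on $D$, which—since $D$ is smooth—must be a point where another component of $\Delta_f$ meets $D$; distinct branch points give distinct such points. Thus $D$ meets $\Delta_f-D$ in at least two points. (This is also where the restriction to \emph{rational} components is essential: a component of positive genus carries connected étale double covers and forces no branch points.)

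The main obstacle I expect is the local structure analysis of the second paragraph: pinning down exactly how the vanishing order of $\det M$, the rank of the fibre, and the smoothness of the total space $X$ constrain one another—in particular ruling out rank-one or higher-multiplicity behaviour over smooth points of $\Delta_f$—requires careful bookkeeping of the several degeneration types and of the various ways the vertex locus can meet the fibre. By contrast, once the double cover and its ramification are under control, the translation of the Picard-number hypothesis into connectedness of $\widetilde{D}\to D$ and the concluding Riemann–Hurwitz count are comparatively formal.
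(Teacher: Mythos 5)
Your argument is correct, but it necessarily differs from the paper's, for the simple reason that the paper offers no argument at all: its proof of Proposition~\ref{prop:pdisc} consists entirely of the citation \cite[Remark~4.2 and Lemma~4.5]{Miy83}, which is where these facts about discriminants of conic bundles with $\rho(X)=\rho(S)+1$ are established. What you have written is essentially a reconstruction of the classical argument (in the style of Beauville's work on conic bundles, and in substance what the cited lemmas contain): reducedness of $\Delta_f$ from smoothness of $X$ via the local symmetric-matrix model; the double cover $\widetilde{\Delta}\to\Delta_f$ of lines in singular fibres, étale exactly over the rank-two locus; the hypothesis $\rho(X)=\rho(S)+1$ converted into connectedness of $\widetilde{D}\to D$ by intersecting the would-be ruling divisor $Z$ first with a general fibre (killing the $\xi$-coefficient) and then with a line $\ell'$ of a singular fibre not contained in $Z$ (giving $Z\cdot\ell'\ge1$ against $f^*L\cdot\ell'=0$); and Riemann--Hurwitz on $D\cong\mathbb{P}_1$ forcing at least two branch points, each a rank-one fibre, hence a singular point of $\Delta_f$, hence (as $D$ is smooth) a point of $D\cap(\Delta_f-D)$. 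Your proof has the merit of making this step self-contained, where the paper leaves the geometry hidden in a reference.

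One place in your write-up should be tightened, namely the rank-one local analysis, which you yourself flag as the main obstacle. A double line has no distinguished vertex, so ``all partials at the vertex vanish'' is not the right formulation there. The correct statements are: (a) at a rank-one point all $2\times2$ minors of $M$ vanish, so the adjugate, and hence $d(\det M)$, vanishes, giving $\operatorname{mult}_p(\det M)\ge2$ --- this is your ``crucial local fact''; and (b) a component over which the \emph{general} fibre has rank one is impossible when $X$ is smooth, because after completing the square the transverse derivative of the defining equation restricts on the double line to a binary quadratic form, which always has a zero, producing a singular point of $X$ somewhere on that line. Fact (b), not a vertex computation, is what reducedness actually requires in the rank-one case; with it, singular points of $\Delta_f$ coincide exactly with rank-one fibres, and the rest of your argument goes through as written.
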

\begin{proof}
cf.~{\cite[Remark~4.2 and Lemma~4.5]{Miy83}}
\end{proof}
This motivates the following definition:
\begin{defn}
A divisor $D$ on a compact complex surface is said to have \emph{property~(P)} if it is reduced and for any smooth rational component $C$ of $D$ we have
\[C\cdot(D-C)\ge2.\]
\end{defn}
Using the terminology of this definition, Proposition~\ref{prop:pdisc} states that $\Delta_f$ has property~(P).

Property~(P) is preserved under blow-downs:
\begin{prop}\label{prop:pblow}
Let $S$ be a compact surface and $p\colon S\to S'$ the blow-down of a $(-1)$-curve $C_0$ on $S$. Then for any divisor $D$ on $S$ which has property~(P), the divisor $D':=p_*D$ on $S'$ also has property~(P).
\end{prop}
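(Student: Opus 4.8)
The plan is to analyze how the two conditions defining property~(P)---reducedness and the intersection inequality for smooth rational components---behave under the blow-down $p\colon S\to S'$ of the $(-1)$-curve $C_0$. Since reducedness is clearly preserved when passing from $D$ to its direct image $D'=p_*D$ (the push-forward of a reduced divisor is reduced), the real content lies in checking the inequality $C'\cdot(D'-C')\ge2$ for every smooth rational component $C'$ of $D'$. I would set up the proof by writing $D=\sum a_i C_i$ with all $a_i=1$ (by reducedness), splitting the components into those contracted by $p$ (which can only be $C_0$ itself, if it appears in $D$) and those that dominate their images. The key computational tool is the projection formula together with the standard relations for intersection numbers under blow-down, namely $p^*D'=D+(D\cdot C_0)C_0$ and the fact that $C_0^2=-1$, $p_*C_0=0$.

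The first main step is to fix a smooth rational component $C'$ of $D'$ and let $C\subset S$ be its strict transform, which is again smooth and rational. I would relate $C'\cdot(D'-C')$ computed on $S'$ to the corresponding intersection on $S$ via the projection formula. The crucial point is that $C$ may meet $C_0$, and blowing down $C_0$ can \emph{increase} the intersection number of $C$ with the rest of the divisor, so one expects $C'\cdot(D'-C')\ge C\cdot(D-C)$ in the favorable direction; combined with the hypothesis $C\cdot(D-C)\ge2$ this would immediately give the claim. I would make this precise by writing $C=p^*C'-m C_0$ where $m=\mathrm{mult}$ of $C'$ at $p(C_0)$ (the center of the blow-up), and then expanding $C'\cdot(D'-C')$ using $p^*$ and the projection formula, tracking the extra nonnegative contributions coming from $C_0$.

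The step I expect to be the main obstacle is the boundary case in which $C_0$ itself is a component of $D$, so that $C_0$ disappears under push-forward and no longer contributes to $D'$. In that situation one must verify that removing $C_0$ does not drop any surviving smooth rational component below the threshold~$2$: the intersection points that a component $C$ previously had \emph{with} $C_0$ must be accounted for, and one has to show they are compensated by the images of those points becoming intersection points with other components of $D'$ on $S'$, or else argue using property~(P) applied to $C_0$ itself (its own inequality $C_0\cdot(D-C_0)\ge2$ forces $C_0$ to meet the rest of $D$ in at least two points, constraining the local picture). Carefully bookkeeping these intersection points at the center $p(C_0)$---distinguishing whether $C'$ passes through that point and with what multiplicity---is where the argument requires attention; once this local analysis is settled, the global inequality follows by summing the nonnegative corrections.
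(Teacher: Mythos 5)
Your proposal follows essentially the same route as the paper's proof: push forward the components, compare $C'\cdot(D'-C')$ with $C\cdot(D-C)$ using $p^*C'=C+mC_0$ and the projection formula, and treat the case where $C_0$ is a component of $D$ by applying property~(P) to $C_0$ itself. The bookkeeping you leave open resolves exactly as you anticipate: smoothness of $C'$ forces $m=C_0\cdot C\in\{0,1\}$, so the correction term $m\,C_0\cdot(D-C)$ is nonnegative in both cases ($C_0\not\subset D$ gives $C_0\cdot(D-C)\ge0$ directly, while $C_0\subset D$ gives $C_0\cdot(D-C)\ge 2+C_0^2-m\ge0$), and the inequality descends to $S'$.
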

\begin{proof}
Since $D$ is reduced, there are distinct irreducible curves $C_1$, $\dots$, $C_k$ different from $C_0$ such that
\[\text{either}\quad D=\sum_{i=1}^kC_i\qquad\text{or}\quad D=C_0+\sum_{i=1}^kC_i.\]
If we let $C_1':=p_*C_1$, $\dots$, $C_k':=p_*C_k$, then $C_1'$, $\dots$, $C_k'$ are distinct irreducible curves on $S'$ and we have
\[D'=p_*D=\sum_{i=1}^kC_i'.\]
If now $C_i'$ is smooth rational for some $i$, then also $C_i$ is smooth rational, so by hypothesis we have
\[C_i\cdot(D-C_i)\ge2.\]
From this we obtain:
\[\begin{split}
C_i'\cdot(D'-C_i')&=p^*C_i'\cdot p^*(p_*D-C_i')\\
&=p^*C_i'\cdot(D-C_i)\\
&=C_i\cdot(D-C_i)+(C_0\cdot C_i)C_0\cdot(D-C_i).
\end{split}\]
The claim now follows easily because $C_0\cdot C_i\in\{0,1\}$ and either $C_0$ is not a component of $D$, whence $C_0\cdot(D-C_i)\ge0$, or $C_0$ is a smooth rational component of $D$ and thus
\[C_0\cdot(D-C_i)=C_0\cdot(D-C_0)+C_0^2-C_0\cdot C_i\ge0\]
by hypothesis.
\end{proof}
We shall use Propositions \ref{prop:pdisc} and \ref{prop:pblow} to study discriminant loci of conic bundles over surfaces by induction over blow-ups. For surfaces with algebraic dimension~$0$, we obtain:
\begin{lem}\label{lem:disca0}
Let $S$ be a compact Kähler surface with algebraic dimension $a(S)=0$. Then any conic bundle $f\colon X\to S$ with $\rho(X)=\rho(S)+1$ is a $\mathbb{P}_1$-bundle.
\end{lem}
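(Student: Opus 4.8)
The plan is to show that the hypothesis forces the discriminant locus $\Delta_f$ to vanish, since a conic bundle all of whose fibers are smooth conics is a $\mathbb{P}_1$-bundle. By Proposition~\ref{prop:pdisc}, $\Delta_f$ is a reduced divisor with property~(P), so it suffices to establish the purely surface-theoretic statement that on a compact Kähler surface with $a(S)=0$ every divisor $D$ with property~(P) is zero. I would prove this by induction on the number of blow-ups needed to reach the minimal model, using that a Kähler surface with $a(S)=0$ necessarily has $\kappa=0$ with minimal model a complex torus or a K3 surface (the remaining $\kappa=0$ cases, Enriques and bielliptic surfaces, being projective and hence of algebraic dimension $2$), and that both the condition $a(S)=0$ and the Kähler property are preserved under blow-down.

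For the base case I would take $S$ minimal, so $K_S\cong\mathcal{O}_S$. Here I would invoke the classical fact that on a surface with $a(S)=0$ there are only finitely many irreducible curves and that their intersection form is negative definite; combined with adjunction $2p_a(C)-2=C^2+C\cdot K_S=C^2$ and $C^2<0$, this forces every irreducible curve to be a smooth rational $(-2)$-curve. Writing $D=\sum_{i=1}^kC_i$ and summing the property~(P) inequalities $C_i\cdot(D-C_i)\ge2$ over all components gives
\[D^2-\sum_{i=1}^kC_i^2=\sum_{i=1}^kC_i\cdot(D-C_i)\ge2k,\]
and since $\sum_iC_i^2=-2k$ this reads $D^2\ge0$, contradicting negative definiteness unless $D=0$.

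For the inductive step I would choose a $(-1)$-curve $C_0$ on the non-minimal surface $S$ and let $p\colon S\to S'$ be its contraction. By Proposition~\ref{prop:pblow} the divisor $D':=p_*D$ again has property~(P) on $S'$, which satisfies $a(S')=0$ and requires one fewer blow-up, so by induction $D'=0$. Hence $D$ is supported on the contracted curve $C_0$; being reduced, $D$ equals $0$ or $C_0$, and $D=C_0$ is excluded because the smooth rational curve $C_0$ would then satisfy $C_0\cdot(D-C_0)=0<2$, violating property~(P). Thus $D=0$, and applying this to $D=\Delta_f$ yields $\Delta_f=0$, so $f$ is a $\mathbb{P}_1$-bundle.

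The heart of the matter, and the step I expect to be most delicate, is the reduction to the minimal model. A naive global computation on $S$ itself does not close: since the components of $\Delta_f$ are still all rational but may have self-intersection $\le-3$, property~(P) only yields $\sum_i(C_i^2+2)\le D^2<0$, which is perfectly compatible with the curves of self-intersection below $-2$ that a general blow-up of a K3 surface or torus does contain. It is precisely the passage to the minimal model that forces every curve to be a $(-2)$-curve and makes the summation contradiction work, so the real content of the argument is concentrated in Proposition~\ref{prop:pblow} together with the classification input identifying the minimal model.
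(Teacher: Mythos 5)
Your proof is correct, but it takes a genuinely different route from the paper's. The paper disposes of the surface-theoretic statement in one line: on a compact Kähler surface with $a(S)=0$, every connected curve is a tree of rational curves, and a tree always has an end component, i.e.\ a smooth rational component $C$ meeting the rest of the divisor in at most one point, so that $C\cdot(D-C)\le1$ and property~(P) fails. You instead run the reduction to the minimal model explicitly: Proposition~\ref{prop:pblow} carries property~(P) down the blow-ups (with the leaf-type argument appearing only in your inductive step, to rule out $D=C_0$), and on the minimal model --- a K3 surface or a torus, by the classification input you correctly supply --- you combine negative definiteness of the curve lattice with adjunction to see that every component is a $(-2)$-curve, and then sum the property~(P) inequalities to obtain $D^2\ge0$, contradicting negative definiteness. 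What the paper's route buys is brevity, at the price of invoking a finer structural fact about curves on $a(S)=0$ Kähler surfaces (whose standard proof is essentially your argument: rationality plus negative definiteness excludes cycles, tangencies and multiple intersection points, hence forces trees). What your route buys is economy of input --- only the negative-definiteness theorem, adjunction, and the identification of minimal models are needed --- and structural parallelism with the paper's treatment of the case $a(S)=1$ in Lemma~\ref{lem:disca1}, which is exactly this kind of induction; note that the paper deploys Proposition~\ref{prop:pblow} only there, not for $a(S)=0$. Your closing observation that a direct global computation on a non-minimal $S$ cannot close (because components of self-intersection $\le-3$ appear after blowing up) is accurate, and it is precisely why either the blow-down induction or the tree-structure fact is unavoidable.
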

\begin{proof}
By Proposition~\ref{prop:pdisc} the discriminant locus $\Delta_f$ has property~(P), so it is sufficient to show that if $D$ is a divisor on $S$ having property~(P), then necessarily $D=0$.

Since $a(S)=0$, however, any connected curve on $S$ is a tree of rational curves which never satisfies property~(P).
\end{proof}
Using more intricate arguments, we can also handle the case where $S$ has algebraic dimension $1$:
\begin{lem}\label{lem:disca1}
Let $S$ be a compact complex surface with algebraic dimension $a(S)=1$ and $f\colon X\to S$ a conic bundle with $\rho(X)=\rho(S)+1$. Then we have
\[\Delta_f^2\ge3\Delta_f\cdot K_S+4K_S^2.\]
\end{lem}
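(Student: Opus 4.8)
Since $a(S)=1$, the algebraic reduction of $S$ is an elliptic fibration $g\colon S\to B$ onto a smooth curve, and every curve on $S$ — in particular every component of $\Delta_f$ — is contained in a fibre of $g$. This verticality is the structural input I would exploit throughout. I would record at once the following consequence of property~(P), which $\Delta_f$ enjoys by Proposition~\ref{prop:pdisc}: if $C_0$ is a $(-1)$-curve on $S$, then $C_0$ is vertical, so $C_0\cdot F=0$ for its fibre $F$ and hence $C_0\cdot(F-C_0)=-C_0^2=1$; writing $F-C_0=\sum_j m_jC_j$ with $m_j\ge1$, this forces $C_0$ to meet the rest of its fibre in a single reduced point. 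Consequently $\Delta_f\cdot C_0\le1$, and $C_0$ cannot be a component of $\Delta_f$, since otherwise property~(P) would demand $C_0\cdot(\Delta_f-C_0)\ge2$, impossible because that intersection is at most $C_0\cdot(F-C_0)=1$.

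The plan is then to prove the inequality by induction on the number of blow-ups needed to reach a relatively minimal model, using Proposition~\ref{prop:pblow} to keep property~(P) in force. Set $Q(D,S):=D^2-3\,D\cdot K_S-4K_S^2$. Let $p\colon S\to S'$ contract a $(-1)$-curve $C_0$ and put $D':=p_*\Delta_f$. Since $C_0$ is not a component of $\Delta_f$, we have $\Delta_f=p^*D'-mC_0$ with $m=\Delta_f\cdot C_0\in\{0,1\}$ by the previous paragraph. A direct computation from $K_S=p^*K_{S'}+C_0$ gives
\[Q(\Delta_f,S)=Q(D',S')-(m+4)(m-1),\]
and $(m+4)(m-1)\le0$ for $m\in\{0,1\}$, so $Q(\Delta_f,S)\ge Q(D',S')$. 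Iterating, it suffices to establish $Q\ge0$ on a relatively minimal elliptic surface $S_{\min}$, for the image $\Delta$ of $\Delta_f$, which still has property~(P).

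On $S_{\min}$ every curve is vertical and, by adjunction together with Kodaira's classification of singular fibres, satisfies $K_{S_{\min}}\cdot C=0$, fibre components being either $(-2)$-curves or irreducible curves of arithmetic genus one. Hence $K_{S_{\min}}\cdot\Delta=0$, while $K_{S_{\min}}^2\le0$ follows from the Hodge index theorem applied to the nef class $F$ with $F^2=0=F\cdot K_{S_{\min}}$. It therefore remains to show $\Delta^2=0$. Decomposing $\Delta=\sum_\lambda\Delta_\lambda$ into connected components, each supported in a single fibre, I would argue against Kodaira's list: the multiplicity-free reduced fibres of type $I_n$, $II$, $III$, $IV$ are configurations for which property~(P) forces $\Delta_\lambda$ to be the entire reduced fibre, of square $F^2=0$; an irreducible genus-one fibre likewise has square $0$; whereas the fibres of type $I_n^*$, $II^*$, $III^*$, $IV^*$ are trees of $(-2)$-curves in which any nonempty connected subconfiguration has a smooth rational component meeting the remainder in at most one point, so property~(P) excludes them entirely. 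This yields $\Delta^2=0$ and hence $Q(\Delta,S_{\min})=-4K_{S_{\min}}^2\ge0$.

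The main obstacle is precisely this fibre-by-fibre combinatorial analysis: one must check, uniformly across Kodaira's classification, that a reduced property-(P) configuration of vertical curves can only be an entire reduced fibre of multiplicity-free type (or an irreducible genus-one fibre), so that its self-intersection vanishes. The verticality of $(-1)$-curves, which pins $m$ to $\{0,1\}$ in the induction, and the vanishing $K_{S_{\min}}\cdot C=0$ are the two facts that reduce the whole statement to this combinatorial core.
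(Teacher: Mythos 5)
The skeleton of your argument—induction over blow-downs with property~(P) preserved by Proposition~\ref{prop:pblow}, the identity $Q(\Delta_f,S)=Q(D',S')-(m+4)(m-1)$ for $Q(D,S)=D^2-3D\cdot K_S-4K_S^2$, and a fibre-by-fibre analysis on the minimal model—is the same as the paper's, and your minimal-model step is sound (the paper gets $\Delta^2=0$ more quickly: property~(P) for a sum of $(-2)$-curves gives $C_i\cdot D\ge C_i^2+2=0$, hence $D^2\ge0$, while $D^2\le0$ on a non-algebraic surface). But the step that drives your induction, namely ``$m=\Delta_f\cdot C_0\in\{0,1\}$ and a $(-1)$-curve is never a component of $\Delta_f$,'' has a genuine gap. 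Your intersection bound tacitly assumes that the fibre $F$ through the $(-1)$-curve $C_0$ contains $C_0$ with multiplicity one, so that $F-C_0$ is a nonnegative combination of the \emph{other} components and therefore $C_0\cdot(\Delta_f-C_0)\le C_0\cdot(F-C_0)=1$. On a non-minimal elliptic surface this is false: $C_0$ can occur in $F$ with multiplicity $m_0\ge2$, and then $F-C_0$ still contains $(m_0-1)C_0$, which contributes \emph{negatively} to the intersection with $C_0$, so the number $C_0\cdot(F-C_0)=1$ bounds nothing; the correct bound is $C_0\cdot\Delta_f\le m_0$. Concretely, blow up the node of an $I_1$ fibre on a non-algebraic elliptic K3 surface: the fibre becomes $\tilde C+2E$ with $E$ the exceptional $(-1)$-curve and $\tilde C$ a smooth rational $(-4)$-curve, $\tilde C\cdot E=2$. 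The reduced divisor $D=\tilde C+E$ satisfies property~(P) (each component meets the other with multiplicity $2$), yet $E$ is a $(-1)$-curve component of $D$. So your claim that $(-1)$-curves cannot be components of a property-(P) divisor is not merely unproven but false, and with it the bound $m\le1$ collapses; note that for $m\ge2$ your correction term $-(m+4)(m-1)$ is negative, so the induction then fails.

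This is exactly where the paper has to work. Writing $D=p^*D'+(\varepsilon-\mu)C_0$, where $\varepsilon\in\{0,1\}$ is the coefficient of $C_0$ in $D$ and $\mu$ is the multiplicity of $D'$ at the blown-down point (so your $m$ equals $\mu-\varepsilon$), the needed inequality $\mu-\varepsilon\le1$ is proved by a case analysis against Kodaira's list: $\mu\le3$ always; the case $\mu=3$ (three concurrent rational curves, type $IV$) is excluded because the strict transforms of those curves would violate property~(P) on $S$; and in the case $\mu=2$ (node, cusp, tangency, or cycle point of $D'$) one shows that property~(P) \emph{forces} $\varepsilon=1$, i.e.\ the $(-1)$-curve must be a component of $D$—the exact opposite of your claim, and precisely what the example above illustrates. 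Your induction would go through once your verticality argument is replaced by such a case analysis, but as written the key inequality is derived from a fallacious intersection estimate.
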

\begin{proof}
By Proposition~\ref{prop:pdisc}, it is sufficient to show that for any divisor $D$ on $S$ satisfying property~(P), we have
\begin{equation}\label{eq:discineq}
D\cdot(D-3K_S)-4K_S^2\ge0.
\end{equation}

In order to show inequality~\eqref{eq:discineq}, we first assume $S$ to be minimal. Since $a(S)=1$, the surface $S$ is elliptic. Let
$r\colon S\to B$
be the (unique) elliptic fibration over a smooth curve~$B$. Then any irreducible curve on $S$ is contained in a fiber of $r$. So, by Kodaira's classification of elliptic fibers, any irreducible curve on $S$ is either a $(-2)$-curve or numerically proportional to a fiber of $r$. If $F$ is a fiber of $r$ and $L$ is an arbitrary line bundle on $S$, then $F\cdot L=0$, so for numerical purposes we can assume that $D$ is a sum of distinct $(-2)$-curves. By an easy calculation, property~(P) then implies that $D^2=0$. This in turn implies that $D\cdot L=0$ for any line bundle $L$ on $S$, so inequality~\eqref{eq:discineq} follows from $K_S^2\le0$.

If $S$ is not minimal, we let $p\colon S\to S'$ be the blow-down of a $(-1)$-curve $C_0\subset S$. By Proposition~\ref{prop:pblow}, the divisor $D':=p_*D$ has property~(P), so we may inductively assume that
\begin{equation}\label{eq:indineq}
D'\cdot(D'-3K_{S'})-4K_{S'}^2\ge0.
\end{equation}
Now  we have $K_S=p^*K_{S'}+C_0$. Furthermore, if we write $D=p_*^{-1}D'+\varepsilon C_0$ (note that $\varepsilon\in\{0,1\}$ because D is reduced) and let $\mu$ be the multiplicity of the divisor $D'$ at the point $p(C_0)$, we obtain
\begin{equation}\label{eq:pulb}
D=p^*D'+(\varepsilon-\mu)C_0.
\end{equation}
So we can calculate
\[D\cdot(D-3K_S)-4K_S^2=D'\cdot(D'-3K_S)-4K_S^2+4+(\mu-\varepsilon)(\varepsilon-\mu-3).\]
By inequality~\eqref{eq:indineq}, in order to show inequality~\eqref{eq:discineq}, it thus remains to show that
\begin{equation}\label{eq:mueps}
(\mu-\varepsilon)(\mu-\varepsilon+3)\le 4.
\end{equation}
Since $a(S')=a(S)=1$, the surface~$S'$ is also elliptic with all irreducible curves contained in fibers of the elliptic fibration. By Kodaira's classification of fibers, every reduced curve on $S'$ has multiplicity at most $3$ in any of its points, so in particular we have $\mu\le 3$.

Because $C_0$ is smooth rational and $C_0\cdot(D-C_0)=\mu-\varepsilon+1$, property~(P) yields the implication
\begin{equation}\label{eq:pimp}
\varepsilon=1\Rightarrow\mu\ge2.
\end{equation}

Hence, in order to prove inequality~\eqref{eq:mueps}, it remains to exclude the following two cases:
\begin{enumerate}[(i)]
\item $\mu=3$,
\item $\mu=2$ and $\varepsilon=0$.
\end{enumerate}

We first assume $\mu=3$. Using Kodaira's classification of singular elliptic fibers, we find that the only possible way for the reduced divisor $D'$ to contain a point of multiplicity $3$ is that $D'$ contains three smooth rational components $C_1'$, $C_2'$ and $C_3'$ intersecting in an ordinary triple point. Now, using implication~\eqref{eq:pimp} for further blow-downs of $S'$, we see that $D'$ cannot contain any $(-1)$-curve lying in the same elliptic fiber as the $C_i'$ but distinct from the $C_i'$. Thus we find that the divisor $C_1'+C_2'+C_3'$ is a connected component of $D'$. If we let $C_i:=p^{-1}_*C_i'$, then $C_i$ is a smooth rational component of $D$ such that, by equation~\eqref{eq:pulb},
\[C_i\cdot(D-C_i)=(p^*C_i'-C_0)\cdot(p^*(D'-C_i')+(\varepsilon-2)C_0)=\varepsilon,\]
contradicting property~(P).

It remains to consider the case $\mu=2$. We must show that in this case, $\varepsilon=1$, i.e.~$C_0$ is a component of $D$. To prove this, we can assume that $D'$ consists of exactly one connected component, namely the one containing the point $p(C_0)$. By Kodaira's classification and implication~\eqref{eq:pimp}, we thus obtain the following possibilities for $D'$:
\begin{enumerate}[(i)]
\item $D'$ is an irreducible curve having an ordinary double point or a cusp at $p(C_0)$. In this case the strict transform $p^{-1}_*D'$ is a smooth rational curve, so property~(P) implies that $C_0$ must be a component of $D$.
\item $D'=C_1'+C_2'$ where $C_1'$ and $C_2'$ are smooth rational curves intersecting only at $p(C_0)$ (with multiplicity $2$). Then $p^{-1}_*D'$ consists of two smooth rational curves intersecting transversally at exactly one point, so again by property~(P), $C_0$ must be a component of $D$.
\item $D'$ is a cycle of smooth rational curves. Then $p^{-1}_*D'$ is a chain of rational curves. In this case, we can apply property~(P) to the curves at the end of this chain to conclude that $C_0$ must be a component of~$D$.\qedhere
\end{enumerate}
\end{proof}

We can use the preceding lemmas to estimate the dimension of the deformation space of a conic bundle via the Riemann-Roch theorem. To do so, we need the following basic result:
\begin{prop}\label{prop:conbun}
Let $f\colon X\to S$ be a conic bundle over a compact surface $S$. Then we have $\Ext^3(\Omega^1_{X/S},\mathcal{O}_X)=0$ and $H^3(T_X)=0$.
\end{prop}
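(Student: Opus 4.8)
The plan is to derive both vanishings from Serre duality on the compact complex threefold $X$, converting them into the statement that certain sheaves, which restrict trivially to the general fibre of $f$, have no global sections.

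For the second assertion I would argue as follows. Serre duality gives $H^3(T_X)\cong H^0(\Omega^1_X\otimes K_X)^*$, so it suffices to show $H^0(\Omega^1_X\otimes K_X)=0$. Let $F\cong\mathbb{P}_1$ be a general (hence smooth) fibre of $f$. Since $N_{F/X}\cong f^*T_S|_F$ is trivial, adjunction gives $K_X|_F\cong K_F\cong\mathcal{O}_{\mathbb{P}_1}(-2)$, and the relative cotangent sequence shows that $\Omega^1_X|_F$ is an extension of $\mathcal{O}_{\mathbb{P}_1}(-2)$ by $\mathcal{O}_{\mathbb{P}_1}^{\oplus2}$; hence $(\Omega^1_X\otimes K_X)|_F$ has only negative summands and carries no sections. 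As the locally free—in particular torsion-free—sheaf $\Omega^1_X\otimes K_X$ thus vanishes on every general fibre, and these sweep out a dense open subset of $X$, any global section vanishes on a dense set and is therefore zero.

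For the first assertion I would run the same mechanism. Writing $\mathcal{O}_X=K_X\otimes K_X^{-1}$ and moving the line bundle across, Serre duality yields
\[\Ext^3(\Omega^1_{X/S},\mathcal{O}_X)\cong\Ext^3(\Omega^1_{X/S}\otimes K_X,K_X)\cong H^0(X,\Omega^1_{X/S}\otimes K_X)^*,\]
so again it is enough to prove $H^0(\Omega^1_{X/S}\otimes K_X)=0$. On a general fibre $\Omega^1_{X/S}|_F\cong\Omega^1_F\cong\mathcal{O}_{\mathbb{P}_1}(-2)$, whence $(\Omega^1_{X/S}\otimes K_X)|_F\cong\mathcal{O}_{\mathbb{P}_1}(-4)$ has no sections. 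To repeat the density argument I must know that $\Omega^1_{X/S}\otimes K_X$ is torsion-free, i.e.\ that $\Omega^1_{X/S}$ is. For this I would use that $X$ is a smooth divisor in $\mathbb{P}(E)$: the relative conormal sequence of $X$ in $\mathbb{P}(E)$,
\[0\to\mathcal{O}_X(-X)\to\Omega^1_{\mathbb{P}(E)/S}|_X\to\Omega^1_{X/S}\to0,\]
exhibits $\Omega^1_{X/S}$ as the cokernel of a map $\phi$ from a line bundle into the rank-two bundle $\Omega^1_{\mathbb{P}(E)/S}|_X$ (injectivity of $\phi$ follows from generic injectivity over the smooth fibres together with torsion-freeness of $\mathcal{O}_X(-X)$). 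Such a cokernel is torsion-free as soon as the degeneracy locus $\Sigma=\{\phi=0\}$—the locus where $f$ fails to be smooth—has codimension at least two in $X$.

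The crux, and the step I expect to be the main obstacle, is therefore the estimate $\dim\Sigma\le1$. Since $\Sigma$ maps to the discriminant $\Delta_f\subset S$, which is a genuine divisor (the discriminant section is not identically zero, as otherwise the general fibre would be singular and $X$ non-reduced or singular), the only way for $\Sigma$ to become two-dimensional is to have a curve in $S$ over which every fibre is a double line, so that a whole line of singular points varies in a two-parameter family. I would rule this out by a local smoothness computation: writing the conic locally as a quadratic form $q(s,z)$ whose matrix has rank one along such a curve $C=\{s_2=0\}$, one checks that at a point of the double line all partial derivatives of $q$ in the fibre directions and in the $C$-direction vanish, while the transverse derivative restricts on the double line to a binary quadratic form, which necessarily has a zero on $\mathbb{P}_1$. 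At that zero every partial derivative of $q$ vanishes, so $X$ would be singular there—contradicting that $X$ is a manifold. Hence double-line fibres occur only over isolated points, $\Sigma$ is at most a curve, $\Omega^1_{X/S}$ is torsion-free, and the density argument applies, giving $H^0(\Omega^1_{X/S}\otimes K_X)=0$ as required.
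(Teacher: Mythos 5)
Your proposal is correct and follows essentially the same route as the paper: both vanishings are reduced via Serre duality to $H^0(\Omega^1_X\otimes K_X)=0$ and $H^0(\Omega^1_{X/S}\otimes K_X)=0$, these are proved by negativity on the general fibre, and the torsion-freeness of $\Omega^1_{X/S}$ needed for the density argument is extracted from the relative conormal sequence of $X\subset\mathbb{P}(E)$. The only point of divergence is the codimension-two step: the paper asserts that smoothness of $X$ ``readily implies'' that $\Omega^1_{X/S}$ is locally free outside codimension two and then spells out the saturation argument (the sub-line bundle $N^*_{X|\mathbb{P}(E)}$ equals its saturation because two line bundles agreeing outside codimension two coincide), whereas you prove that codimension bound explicitly by your local computation excluding double-line fibres over a curve in $S$ --- thereby substantiating a detail the paper leaves implicit --- while conversely quoting as standard the saturation-type lemma (cokernel of a line-bundle injection into a rank-two bundle with codimension-two degeneracy locus is torsion-free) that the paper writes out. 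Both versions are complete in substance; yours is, if anything, more careful exactly where the paper is most terse.
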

\begin{proof}
We first prove that the relative cotangent sheaf $\Omega^1_{X/S}$ is torsion-free:  If we let $E:=f_*K_{X/S}^*$, then $X$ is a submanifold of $\mathbb{P}(E)$ and we get a short exact sequence
\[0\longrightarrow N^*_{X|\mathbb{P}(E)}\longrightarrow\Omega^1_{\mathbb{P}(E)/S}|_X\longrightarrow\Omega^1_{X/S}\longrightarrow0.\]
Dividing out the torsion of $\Omega^1_{X/S}$ yields the sequence
\[0\longrightarrow K\longrightarrow\Omega^1_{\mathbb{P}(E)/S}|_X\longrightarrow\Omega^1_{X/S}/\mathrm{tor}\longrightarrow0,\]
where $K$ is a reflexive rank-$1$ subsheaf (and thus a sub-line bundle) of the rank-$2$ bundle $\Omega^1_{\mathbb{P}(E)/S}|_X$. We obviously have
\begin{equation}\label{eq:torincl}
N^*_{X|\mathbb{P}(E)}\subset K.
\end{equation}
Smoothness of $X$ readily implies that $\Omega^1_{X/S}$ is locally free (and thus torsion-free) outside codimension~$2$. This means that \eqref{eq:torincl} is an isomorphism outside codimension~$2$. Since the left and right sides of~\eqref{eq:torincl} are both line bundles, this implies that \eqref{eq:torincl} is actually an equality, so $\Omega^1_{X/S}$ is torsion-free.

Now, for a general point $s\in S$, the fiber~$X_s$ is isomorphic to $\mathbb{P}_1$, and thus $K_X|_{X_s}\cong\Omega^1_{X/S}|_{X_s}\cong\Omega^1_{X_s}\cong\mathcal{O}_{\mathbb{P}_1}(-2)$. So we have $H^0((\Omega^1_{X/S}\otimes K_X)|_{X_s})=0$ for general $s$ and thus $H^0(\Omega^1_{X/S}\otimes K_X)=0$ by torsion-freeness of $\Omega^1_{X/S}$.

Serre duality yields $\Ext^3(\Omega^1_{X/S},\mathcal{O}_X)\cong H^0(\Omega^1_{X/S}\otimes K_X)^*$, and the first part of the claim follows.

Finally, again by Serre duality, $H^3(T_X)=0$ is equivalent to $H^0(\Omega^1_X\otimes K_X)=0$, which is clearly true since the bundle $\Omega^1_X\otimes K_X$ is negative on any smooth fiber of~$f$.
\end{proof}
We have now gathered all the necessary tools to estimate the dimension of the deformation space of a non-algebraic conic bundle:
\begin{prop}\label{prop:exdefo}
Let $S$ be a compact non-algebraic Kähler surface and $f\colon X\to S$ a conic bundle with $\rho(X)=\rho(S)+1$. Then
\[h^1(T_X)\ge h^2(T_X).\]
If $h^1(T_X)=h^2(T_X)$, then $H^0(T_X)=0$ and
\[c_1^2(S)=c_2(S)=c_1^2(f_*(K_{X/S}^*))=c_2(f_*(K_{X/S}^*))=0.\]
\end{prop}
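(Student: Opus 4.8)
The plan is to reduce everything to the single inequality $\chi(T_X)\le0$ together with its equality case, and to extract $\chi(T_X)$ from Hirzebruch--Riemann--Roch. By Proposition~\ref{prop:conbun} we have $h^3(T_X)=0$, so
\[h^1(T_X)-h^2(T_X)=h^0(T_X)-\chi(T_X).\]
Hence $\chi(T_X)\le0$ immediately yields $h^1(T_X)-h^2(T_X)\ge h^0(T_X)\ge0$, which is the asserted inequality. In the equality case $h^1(T_X)=h^2(T_X)$ we would have $h^0(T_X)=\chi(T_X)$, and combined with $\chi(T_X)\le0\le h^0(T_X)$ this forces $h^0(T_X)=\chi(T_X)=0$; in particular $H^0(T_X)=0$ drops out at once, and the Chern-number vanishing will come from analysing when $\chi(T_X)=0$.

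To compute $\chi(T_X)$ I would realise $X$ as the divisor $X\in|L|$ on $Y:=\mathbb{P}(E)$ with $L=\mathcal{O}_{\mathbb{P}(E)}(2)\otimes\pi^*\det E^*$, and combine the normal-bundle sequence $0\to T_X\to T_Y|_X\to N_{X|Y}\to0$ with the restriction sequences along $X$ to get
\[\chi(T_X)=\chi(T_Y)-\chi(T_Y\otimes L^{-1})-\chi(L)+\chi(\mathcal{O}_Y).\]
Each term is then evaluated on the fourfold $Y$ using the tangent sequence $0\to T_{Y/S}\to T_Y\to\pi^*T_S\to0$, the relative Euler sequence for $T_{Y/S}$, and the projection formula via $\pi_*\xi^2=1$, $\pi_*\xi^3=c_1(E)$, $\pi_*\xi^4=c_1^2(E)-c_2(E)$ (where $\xi=c_1(\mathcal{O}_{\mathbb{P}(E)}(1))$ and $c_3(E)=0$ on the surface~$S$). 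The result is a universal expression
\[\chi(T_X)=\alpha K_S^2+\beta c_2(S)+\gamma c_1^2(E)+\delta\,c_1(E)\cdot K_S+\varepsilon c_2(E).\]
Specialising to the $\mathbb{P}_1$-bundle case, where $\Delta_f=0$ and hence $c_1(E)=0$, reduces $\chi(T_X)$ to $\chi(E)+\chi(T_S)$ and already determines $\alpha=\tfrac{17}{12}$, $\beta=-\tfrac{7}{12}$, $\varepsilon=-1$; the full computation fixes the remaining coefficients.

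For the sign I would use that $S$, being non-algebraic Kähler, has $a(S)\le1$, so Lemmas~\ref{lem:disca0} and~\ref{lem:disca1} apply to $\Delta_f$; note $\Delta_f\equiv-c_1(E)$, whence $c_1^2(E)=\Delta_f^2$ and $c_1(E)\cdot K_S=-\Delta_f\cdot K_S$. If $a(S)=0$ then $\Delta_f=0$ by Lemma~\ref{lem:disca0}, so the $c_1(E)$-terms vanish outright. If $a(S)=1$ then Lemma~\ref{lem:disca1} gives $c_1^2(E)+3\,c_1(E)\cdot K_S\ge4K_S^2$; the computation should give $\delta=3\gamma$ with $\gamma<0$ and $\alpha+4\gamma\ge0$, so that this inequality cancels the sign-indefinite cross term and collapses the whole $c_1(E)$-contribution into $(\alpha+4\gamma)K_S^2$. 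Together with the classification facts $K_S^2\le0$ and $c_2(S)=\chi_{\mathrm{top}}(S)\ge0$ (valid for every non-algebraic Kähler surface, whose minimal model has $K^2=0$ and non-negative Euler characteristic) and a Bogomolov-type bound $c_2(E)\ge0$ (coming from the negative semi-definite Néron--Severi lattice of a surface with $a(S)\le1$), every summand becomes non-positive and $\chi(T_X)\le0$ follows.

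The main obstacle is the equality case. When $\chi(T_X)=0$ each non-positive contribution must vanish separately: $(\alpha+4\gamma)K_S^2=0$ forces $K_S^2=0$ (so $S$ is minimal), $\beta c_2(S)=0$ forces $c_2(S)=0$, and $\varepsilon c_2(E)=0$ forces $c_2(E)=0$; finally, with $S$ minimal the structural analysis underlying Lemma~\ref{lem:disca1} (where a property-(P) divisor on a minimal elliptic surface is numerically a sum of $(-2)$-curves with self-intersection $0$) gives $\Delta_f^2=0$, that is $c_1^2(E)=0$. Thus all four Chern numbers vanish, and since $H^0(T_X)=0$ was already obtained, the proposition follows. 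I expect the genuinely delicate points to be (a) carrying out the fourfold Riemann--Roch bookkeeping accurately enough to be certain that the cross term is eliminated exactly by Lemma~\ref{lem:disca1} (i.e.\ that $\delta=3\gamma$ and $\alpha+4\gamma\ge0$), and (b) securing the positivity $c_2(E)\ge0$ in the non-algebraic Kähler setting, where the projective Bogomolov inequality must be replaced by its analytic counterpart.
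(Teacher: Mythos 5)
Your overall skeleton is the same as the paper's: reduce to $\chi(T_X)\le0$ via $h^3(T_X)=0$ (Proposition~\ref{prop:conbun}), compute $\chi(T_X)$ by Riemann--Roch on $\mathbb{P}(E)$, and close the sign analysis with the discriminant Lemmas~\ref{lem:disca0}/\ref{lem:disca1}, an analytic Bogomolov-type inequality, and surface classification. But the step you yourself flag as delicate point (a) genuinely fails, and your proposed point (b) does not repair it. The actual Riemann--Roch output (this is the paper's formula~\eqref{eq:riero}, rewritten in your normalization) is
\[\chi(T_X)=\tfrac{17}{12}K_S^2-\tfrac{7}{12}c_2(S)-c_1(E)\cdot K_S-c_2(E),\]
i.e.\ $\alpha=\tfrac{17}{12}$, $\beta=-\tfrac{7}{12}$, $\varepsilon=-1$ as you predicted, but $\gamma=0$ and $\delta=-1$. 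So $\delta\ne3\gamma$: there is \emph{no} $c_1^2(E)$ term at all for Lemma~\ref{lem:disca1} to pair with, and the cross term $-c_1(E)\cdot K_S=\Delta_f\cdot K_S$ survives with indefinite sign (it vanishes on a minimal properly elliptic surface but is uncontrolled after blow-ups). Your substitute bound $c_2(E)\ge0$ is both unjustified as stated --- $c_2(E)$ is not a Néron--Severi class, so negative semi-definiteness of the intersection form on $NS(S)$ says nothing about it --- and insufficient, since even granting it the cross term remains.

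The repair is exactly the paper's move, and it shows why the two inequalities cannot be used as independent sign constraints on separate summands: the correct analytic Bogomolov inequality is Bănică--Le Potier's $c_2(E)-\tfrac13c_1^2(E)\ge0$ (inequality~\eqref{eq:banlep}). Substituting it into the formula trades $-c_2(E)$ for $-\tfrac13c_1^2(E)$, which creates precisely the structure you wanted: effective $\gamma=-\tfrac13$, so that $\delta=3\gamma$ and $\alpha+4\gamma=\tfrac{1}{12}\ge0$, after which Lemma~\ref{lem:disca1} (resp.\ Lemma~\ref{lem:disca0}, where $c_1(E)=0$) yields $\chi(T_X)\le\tfrac1{12}K_S^2-\tfrac7{12}c_2(S)\le0$. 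The equality case must then be restated accordingly: $K_S^2=c_2(S)=0$ and equality in both~\eqref{eq:banlep} and the discriminant inequality; since non-algebraicity and $K_S^2=0$ force $K_S\cdot L=0$ for every line bundle $L$, equality in the discriminant inequality gives $c_1^2(E)=0$, and only then does equality in~\eqref{eq:banlep} give $c_2(E)=0$ --- not, as in your decomposition, from a separate summand $\varepsilon c_2(E)=0$. With these corrections your argument becomes the paper's proof; as written, the sign analysis has a genuine gap.
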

\begin{proof}
The pushforward
\[E:=f_*(K_{X/S}^*)\]
is a rank-$3$ vector bundle on $S$ such that there exists a canonical embedding $X\subset\mathbb{P}(E)$. We have $H^3(T_X)=0$ by Proposition~\ref{prop:conbun}, so by a standard Riemann-Roch calculation (see~\cite[Proposition~4.2]{Schrack10} for details), we obtain the formula
\begin{equation}\label{eq:riero}
\begin{split}
h^1(T_X)-h^2(T_X)&=h^0(T_X)+\bigl(c_2(E)-c_1(E)c_1(S)-\tfrac{4}{3}c_1^2(S)\bigr)\\
&\quad-\tfrac{2}{3}c_1^2(S)+7\chi(\mathcal{O}_S).
\end{split}
\end{equation}
$S$ being a non-algebraic Kähler surface, we have $c_1^2(S)\le0$ and $\chi(\mathcal{O}_S)\ge0$. Furthermore, by \cite[Théorème 3.1]{BL87}, we have
\begin{equation}\label{eq:banlep}
c_2(E)-\tfrac{1}{3}c_1^2(E)\ge0,
\end{equation}
so, by~\eqref{eq:riero}, in order to prove $h^1(T_X)\ge h^2(T_X)$, it remains to show that
\begin{equation}\label{eq:chern}
c_1^2(E)\ge 3c_1(E) c_1(S)+4c_1^2(S).
\end{equation}
If $a(S)=1$, this follows directly from Lemma~\ref{lem:disca1} by observing that $\Delta_f\in\lvert\det E^*\rvert$;
if $a(S)=0$, then $c_1(E)=0$ by Lemma~\ref{lem:disca0} and the inequality follows because $c_1^2(S)\le0$.

Now if $h^1(T_X)=h^2(T_X)$, all summands on the right side of~\eqref{eq:riero} must be zero, so we directly obtain $h^0(T_X)=0$ and $c_1^2(S)=c_2(S)=0$. Since $S$ is non-algebraic, $c_1^2(S)=0$ implies that $c_1(S)\cdot L=0$ for any line bundle $L$ on~$S$. Furthermore, we must have equality in \eqref{eq:banlep} and \eqref{eq:chern}, so we also obtain $c_1^2(E)=c_2(E)=0$.
\end{proof}
We can now prove two of the theorems announced in the introduction:
\begin{proof}[Proof of Theorem~\ref{thm:infpos}]
By Proposition~\ref{prop:exdefo}, we only need to exclude the case $h^1(T_X)=h^2(T_X)=0$: Suppose $H^1(T_X)=H^2(T_X)=0$. Since we have $\Ext^3(\Omega^1_{X/S},\mathcal{O}_X)=0$
by Proposition~\ref{prop:conbun}, applying $\Hom(\cdot,\mathcal{O}_X)$ to the relative cotangent sequence
\[0\longrightarrow f^*\Omega^1_S \longrightarrow \Omega^1_X \longrightarrow \Omega^1_{X/S} \longrightarrow 0\]
yields
\[0=H^2(T_X)\longrightarrow H^2(f^* T_S) \longrightarrow \Ext^3(\Omega^1_{X/S},\mathcal{O}_X)=0\]
and thus (using Serre duality)
\[H^0(\Omega^1_S\otimes K_S)^*\cong H^2(T_S) \cong H^2(f^*T_S) =0.\]
We have $c_1^2(S)=0$ by Proposition~\ref{prop:exdefo}, so $S$ is minimal. Together with $c_2(S)=0$ and $H^2(T_S)=0$, this implies that $S$ is properly elliptic by surface classification theory. Let $r\colon S\to C$ be the elliptic fibration of $S$ over a smooth curve $C$. Tensoring the relative cotangent sequence of the elliptic fibration $r$ by $K_S$ and taking global sections, we obtain the long exact sequence 
\[0\longrightarrow H^0(r^*K_C\otimes K_S) \longrightarrow H^0 (\Omega^1_S\otimes K_S) \longrightarrow H^0(\Omega^1_{S/C}\otimes K_S) \longrightarrow\dots,\]
so we obtain
\begin{equation}\label{eq:canvan}
H^0(r^*K_C\otimes K_S)=0.
\end{equation}
Now, by relative duality,
\begin{equation}\label{eq:reldual}
r_*K_S=K_C\otimes (R^1r_*\mathcal{O}_S)^*,
\end{equation}
where by Riemann-Roch on~$C$,
\[
\begin{split}
\deg(R^1r_*\mathcal{O}_S)^*&=-\deg(R^1r_*\mathcal{O}_S)=-\chi(R^1r_*\mathcal{O}_S)+\chi(\mathcal{O}_C)\\ &=h^0(\mathcal{O}_C)-h^1(\mathcal{O}_C)-h^0(R^1r_*\mathcal{O}_S)+h^1(R^1r_*\mathcal{O}_S).
\end{split}
\]
Now the Leray spectral sequence for~$r$ yields
\[\begin{split}
h^0(\mathcal{O}_S)&=h^0(\mathcal{O}_C),\\ h^1(\mathcal{O}_S)&=h^1(\mathcal{O}_C)+h^0(R^1r_*\mathcal{O}_S),\\
h^2(\mathcal{O}_S)&=h^1(R^1r_*\mathcal{O}_S),
\end{split}\]
hence $\deg(R^1r_*\mathcal{O}_S)^*=\chi(\mathcal{O}_S)=0$ (cf.~\cite[p.~213f.]{BPV04}). We obtain
\[\deg(r_*(r^*K_C\otimes K_S))=4g(C)-4,\]
thus by equation~\eqref{eq:canvan}, we conclude that $g(C)\le1$. On the other hand, as $S$ is a non-algebraic Kähler surface, we must have $H^0(K_S)=H^{2,0}(S)\ne0$, so \eqref{eq:reldual} implies $g(C)\ge1$. So $C$ must be an elliptic curve, in particular we have $K_C\cong\mathcal{O}_C$.
But then, equation~\eqref{eq:canvan} contradicts $H^0(K_S)\ne0$.
\end{proof}
In order to prove Theorem~\ref{thm:infconi}, we need to describe the exceptional cases of Proposition~\ref{prop:exdefo} in more detail:
\begin{proof}[Proof of Theorem~\ref{thm:infconi}]
Suppose that $h^1(T_X)=h^2(T_X)$. Then by Proposition~\ref{prop:exdefo}, we have $c_1^2(S)=0$, which yields that $S$ is a minimal surface. From $c_2(S)=0$, we obtain that $S$ must either be a torus or a minimal properly elliptic surface such that all singular fibers of the elliptic fibration on $S$ are multiples of smooth elliptic curves (cf.~\cite[p.~14]{Kod63b}).

Now suppose that $S$ is a torus. By Proposition~\ref{prop:exdefo}, we have $c_1^2(E)=c_2(E)=0$. By~\cite[Theorem~5.12]{Yan89}, this implies that $E$ is projectively flat. Now, suppose that there exists a rank-2 bundle $V$ on $S$ such that $X\cong\mathbb{P}(V)$. An easy calculation shows that we have $H^q(X, T_{X/S})\cong H^q(S, S^2V\otimes\det V^*)$ for all~$q$, so by $H^0(T_X)=0$, it follows that $H^0(S^2V\otimes\det V^*)=0$ and therefore, by Serre duality, also $H^2(X,T_{X/S})\cong H^2(S,S^2V\otimes\det V^*)=0$. The relative tangent sequence then yields $h^1(T_X)\ge h^1(T_S)=4$ and $h^2(T_X)\le h^2(T_S)=2$, so in particular $h^1(T_X)>h^2(T_X)$.
\end{proof}

\section{Vector bundles on K3 surfaces and tori}
\label{sec:vector-bundles-k3}
The aim of this section is to prove algebraic approximability of projectivized rank-two bundles over Kähler surfaces of Kodaira dimension $0$. The most important step is to examine vector bundles over K3 surfaces and tori:

Let $S$ be a K3 surface or a two-dimensional torus and $V$ a vector bundle on $S$. The objective is to study whether one can construct an algebraic approximation $\pi\colon\mathcal{S}\to T\ni0$ of $S=\mathcal{S}_0$ such that there exists a vector bundle $\mathcal{V}$ on $\mathcal{S}$ with $\mathcal{V}|_{\mathcal{S}_0}\cong V$. We will show that this is always possible (up to a twist of $V$ by a line bundle) if $V$ has rank $2$.

We begin by examining the case of line bundles. In order to deal with higher-rank bundles later on, it is important to have the following statement about ``simultaneous extendability'' of line bundles:
\begin{prop}\label{prop:k3torlb}
Let $S$ be a K3 surface or a torus. Let $\mathcal{K}\ni0$ be the Kuranishi space and $\pi\colon\mathcal{S}\to\mathcal{K}$ a versal deformation of~$S=\mathcal{S}_0$.
Then there exists an $(h^{1,1}(S)-\rho(S))$-dimensional smooth analytic subset $T$ of~$\mathcal{K}$ containing $0$ such that for any line bundle $L$ on $S$, there is a line bundle $\mathcal{L}$ on $\mathcal{S}_T:=\pi^{-1}(T)$ with $\mathcal{L}|_{\mathcal{S}_0}\cong L$.
\end{prop}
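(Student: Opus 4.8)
The plan is to characterise $T$ as the locus in $\mathcal{K}$ over which the Néron--Severi classes of $S$ stay of type $(1,1)$, to prove that this locus is smooth of the predicted dimension by an infinitesimal computation, and finally to integrate the fibrewise classes to genuine line bundles on the total space $\mathcal{S}_T$.

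First I recall the deformation theory of a line bundle $L$. For a tangent vector $v\in H^1(T_S)=T_0\mathcal{K}$, the obstruction to extending $L$ to the first-order deformation of $S$ in the direction $v$ is the image of $c_1(L)\in H^1(\Omega^1_S)$ under cup product with $v$, i.e.\ $\langle v,c_1(L)\rangle\in H^2(\mathcal{O}_S)$, where $\langle\,,\rangle$ is induced by the contraction $T_S\otimes\Omega^1_S\to\mathcal{O}_S$. Since $S$ is a K3 surface or a torus, $K_S\cong\mathcal{O}_S$, and Serre duality identifies this with the perfect pairing $H^1(T_S)\otimes H^1(\Omega^1_S\otimes K_S)\to H^2(K_S)\cong\mathbb{C}$; in particular $\langle\,,\rangle\colon H^1(T_S)\otimes H^1(\Omega^1_S)\to H^2(\mathcal{O}_S)$ is non-degenerate. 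Choosing a basis $\alpha_1,\dots,\alpha_\rho$ of the free abelian group $\mathrm{NS}(S)=H^{1,1}(S)\cap H^2(S,\mathbb{Z})$ and using $\mathrm{NS}(S)\subset H^1(\Omega^1_S)$, the linear forms $\langle\cdot,\alpha_i\rangle$ on $H^1(T_S)$ are therefore linearly independent.

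Next I construct $T$. Shrinking $\mathcal{K}$ to a contractible representative, the local system $R^2\pi_*\mathbb{C}$ is trivial, so each $\alpha_i$ extends to a horizontal section whose projection to the Hodge bundle $R^2\pi_*\mathcal{O}_{\mathcal{S}}$ is a holomorphic section $s_i$, vanishing at $0$ because $\alpha_i$ is of type $(1,1)$ on $S$. I put $T:=\{s_1=\dots=s_\rho=0\}$. By the standard formula for the derivative of the period map, $ds_i(0)\colon H^1(T_S)\to H^2(\mathcal{O}_S)$ is exactly $\langle\cdot,\alpha_i\rangle$, and by the previous paragraph these differentials are independent; hence the $s_i$ cut out $T$ transversally at $0$, so $T$ is smooth there of codimension $\rho$, i.e.\ of dimension $h^1(T_S)-\rho=h^{1,1}(S)-\rho$. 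After further shrinking I may assume that $T$ is a contractible Stein set on which $\pi$ restricts to a family all of whose fibres are again K3 surfaces (resp.\ tori), so that the Hodge bundles $R^q\pi_*\mathcal{O}_{\mathcal{S}_T}$ are locally free and compatible with base change.

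It remains to produce the line bundles, and I expect this integration step to be the main obstacle, since smoothness only controls first-order behaviour while the statement asks for actual line bundles on $\mathcal{S}_T$. The key reduction is that $\mathrm{NS}(S)$ is finitely generated, so it suffices to extend each $L_i$ with $c_1(L_i)=\alpha_i$ together with all of $\mathrm{Pic}^0(S)$, a general $L$ being a product of these. Because $T$ is Stein, Cartan's Theorem~B kills $H^p(T,R^q\pi_*\mathcal{O})$ for $p\ge1$, so the Leray spectral sequence of $\pi\colon\mathcal{S}_T\to T$ yields $H^1(\mathcal{S}_T,\mathcal{O})\cong H^0(T,R^1\pi_*\mathcal{O})$ and $H^2(\mathcal{S}_T,\mathcal{O})\cong H^0(T,R^2\pi_*\mathcal{O})$. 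Ehresmann's theorem gives $H^\bullet(\mathcal{S}_T,\mathbb{Z})\cong H^\bullet(S,\mathbb{Z})$, so each $\alpha_i$ extends to $\tilde\alpha_i\in H^2(\mathcal{S}_T,\mathbb{Z})$ whose image in $H^2(\mathcal{S}_T,\mathcal{O})\cong H^0(T,R^2\pi_*\mathcal{O})$ is $s_i|_T$, which vanishes by construction of $T$; the exponential sequence on $\mathcal{S}_T$ then exhibits $\tilde\alpha_i$ as $c_1$ of a line bundle $\mathcal{L}_i$. For K3 surfaces $\mathrm{Pic}^0=0$ and $\mathcal{L}_i|_{\mathcal{S}_0}\cong L_i$ automatically; for tori I correct the remaining difference in $\mathrm{Pic}^0(S)$ by noting that the evaluation $H^0(T,R^1\pi_*\mathcal{O})\to H^1(\mathcal{O}_S)$ at $0$ is surjective while $H^1(\mathcal{S}_T,\mathbb{Z})\cong H^1(S,\mathbb{Z})$, so that $\mathrm{Pic}^0(\mathcal{S}_T)\to\mathrm{Pic}^0(S)$ is onto. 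Thus every $L\in\mathrm{Pic}(S)$ extends over $\mathcal{S}_T$. The subtle points to watch are that the single transversal locus $T$ is determined by the finitely many generators $\alpha_i$ yet serves all of $\mathrm{Pic}(S)$, and that choosing $T$ Stein is exactly what converts the fibrewise $(1,1)$-condition into the global vanishing of the Chern-class obstruction.
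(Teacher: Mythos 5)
Your proposal is correct, and while it constructs the same locus $T$ (the points of $\mathcal{K}$ over which $NS(S)$ stays of type $(1,1)$) and ends with the same exponential-sequence argument, the decisive smoothness step is carried out by a genuinely different method. The paper invokes the Torelli theorem: the period map $\mathcal{P}\colon\mathcal{K}\to\mathbb{P}(H^2(S,\mathbb{C}))$ is a local isomorphism onto the period domain $\Omega$, and $T:=\mathcal{P}^{-1}(NS(S)^\perp\cap\Omega)$ is smooth of dimension $h^{1,1}(S)-\rho(S)$ because $NS(S)^\perp$ is defined over $\mathbb{R}$ and the positivity condition $\langle\omega,\overline{\omega}\rangle>0$ forces the relevant quadric section to be smooth at every point. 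You instead realize $T$ as a Hodge locus $\{s_1=\dots=s_\rho=0\}$ cut out by the $(0,2)$-projections of flat extensions of generators of $NS(S)$, and you verify transversality at $0$ infinitesimally, via Griffiths' formula for $ds_i(0)$ combined with the perfectness of the contraction pairing $H^1(T_S)\otimes H^1(\Omega^1_S)\to H^2(\mathcal{O}_S)$, which uses $K_S\cong\mathcal{O}_S$ and Serre duality. Your route avoids Torelli entirely, needing only the infinitesimal variation of Hodge structure; this is the standard Noether--Lefschetz-type argument and therefore generalizes to situations where no Torelli theorem is available. What the paper's route buys in exchange is a global description of $T$ inside the period domain and smoothness at every point of $T$, not just near $0$ (harmless here, since the statement is local and one may shrink). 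The final integration step is essentially the same in both proofs: you pass through $H^2(\mathcal{S}_T,\mathbb{Z})\to H^2(\mathcal{S}_T,\mathcal{O})$ using Leray over a Stein base, where the paper works with the pushed-forward exponential sequence $R^1\pi_{T*}\mathcal{O}\to R^1\pi_{T*}\mathcal{O}^*\to R^2\pi_{T*}\mathbb{Z}\to R^2\pi_{T*}\mathcal{O}$; both then correct the $\mathrm{Pic}^0$-part by lifting a class from $H^1(\mathcal{O}_{\mathcal{S}_0})$, your surjectivity of $\mathrm{Pic}^0(\mathcal{S}_T)\to\mathrm{Pic}^0(S)$ being exactly the paper's extension of $\nu$ to $\tilde{\nu}\in H^0(R^1\pi_{T*}\mathcal{O})$. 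Two points you leave implicit, though both are standard and the paper also takes the first for granted: the smoothness (unobstructedness) of the Kuranishi space of a K3 surface or torus, which your transversality argument needs in order to conclude that $T$ is smooth of codimension $\rho$; and the fact that $\mathbb{Z}$-linear independence of the integral classes $\alpha_i$ implies their $\mathbb{C}$-linear independence inside $H^1(\Omega^1_S)$, which is what your appeal to the perfect pairing actually requires.
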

\begin{proof}
We let $h:=h^{1,1}(S)$. Then $(\mathcal{K},0)$ is an $h$-dimensional smooth germ of a complex manifold and we have a $C^\infty$-isomorphism
\[
\mathcal{S}\cong_{C^\infty}S\times \mathcal{K}.
\]
This isomorphism allows us to naturally identify $H^2(\mathcal{S}_t,\mathbb{C})$ with $H^2(S,\mathbb{C})$ for all~$t\in\mathcal{K}$.
If we now denote by $\langle\cdot,\cdot\rangle$ the $\mathbb{C}$-bilinear extension to $H^2(S,\mathbb{C})$ of the standard intersection form on $H^2(S,\mathbb{Z})$, then by Torelli's theorem, the period map
\begin{align*}
\mathcal{P}\colon\mathcal{K}&\to\mathbb{P}(H^2(S,\mathbb{C})),\\
t&\mapsto H^{2,0}(\mathcal{S}_t),
\end{align*}
gives a local isomorphism of $\mathcal{K}$ onto the period domain
\[\Omega:=\{\,[\omega]\in\mathbb{P}(H^2(S,\mathbb{C}))\mid \langle\omega,\omega\rangle=0,\langle\omega,\overline{\omega}\rangle>0\,\}.\]
We now let
\[T:=\mathcal{P}^{-1}(NS(S)^\perp\cap\Omega),\]
where $NS(S)^\perp\subset\mathbb{P}(H^2(S,\mathbb{C}))$ is the orthogonal space with respect to the bilinear form~$\langle\cdot,\cdot\rangle$. Since $\langle\cdot,\cdot\rangle$ is non-degenerate, the projective space $NS(S)^\perp$ has dimension $h+1-\rho(S)$. Since $NS(S)^\perp$ is defined over $\mathbb{R}$, the inequality $\langle\omega,\overline{\omega}\rangle>0$ in the definition of~$\Omega$ implies that $NS(S)^\perp\cap\Omega$ is smooth at every point. This implies that $T$ is smooth of dimension $h-\rho(S)$.

Now for any line bundle $L$ on $S$ and for any $t\in T$, we have that
\[\langle c_1(L),\mathcal{P}(t)\rangle=0,\]
i.e.~$c_1(L)\in H^{1,1}(\mathcal{S}_t)$ by orthogonality of the Hodge decomposition. We consider the following excerpt from the push-forward of the exponential sequence on $\mathcal{S}_T$ via the restriction $\pi_T:=\pi|_{\mathcal{S}_T}$:
\[R^1\pi_{T*}\mathcal{O}\stackrel{\psi}{\longrightarrow} R^1\pi_{T*}\mathcal{O}^*\stackrel{c_1}{\longrightarrow} R^2\pi_{T*}\mathbb{Z}\stackrel{\varphi}\longrightarrow R^2\pi_{T*}\mathcal{O}.\]
The constant extension of $c_1(L)$ defines a section $\zeta\in H^0(R^2\pi_{T*}\mathbb{Z})$ which satisfies $\varphi(\zeta)=0$ (because $c_1(L)\in H^{1,1}(\mathcal{S}_t)$ for all $t\in T$). By the above sequence, we obtain a section in $H^0(R^1\pi_{T*}\mathcal{O}^*)$ which, by the Leray spectral sequence, gives a line bundle $\tilde{\mathcal{L}}$ on $\mathcal{S}_T$ with $c_1(\tilde{\mathcal{L}}|_{\mathcal{S}_0})=c_1(L)$. Thus the line bundle $L\otimes\tilde{\mathcal{L}}^*|_{\mathcal{S}_0}$ is numerically trivial on $\mathcal{S}_0$, so is given by an element $\nu\in H^1(\mathcal{O}_{\mathcal{S}_0})$. The sheaf $R^1\pi_{T*}\mathcal{O}$ is locally free, so $\nu$ can be extended to give a section $\tilde{\nu}\in H^0(R^1\pi_{T*}\mathcal{O})$. If we let $\mathcal{N}$ be the line bundle on $\mathcal{S}$ given by $\psi(\tilde{\nu})$, then $\mathcal{L}:=\tilde{L}\otimes \mathcal{N}$ is a line bundle on $\mathcal{S}$ with $\mathcal{L}|_{\mathcal{S}_0}\cong L$.
\end{proof}
\begin{rem}
It is well-known that any non-trivial deformation of a K3 surface or a torus is an algebraic approximation. So, since $\rho(S)<h^{1,1}(S)$ for any non-algebraic surface $S$, the deformation~$\pi$ given by Proposition~\ref{prop:k3torlb} is indeed an algebraic approximation.
\end{rem}
In order to deal with rank-two bundles arising as extensions of line bundles, it is important to achieve constancy of cohomology dimensions for the extended line bundles. This is automatic for K3 surfaces:
\begin{prop}\label{prop:k3coh}
Let $S$ be a non-algebraic K3 surface, $T\ni0$ a complex manifold and $\pi\colon\mathcal{S}\to T$ a deformation of $S=\mathcal{S}_0$ with the property that for any effective divisor~$D$ on $S$ there exists a line bundle $\mathcal{L}_D$ on $\mathcal{S}$ with $\mathcal{L}_D|_{\mathcal{S}_0}\cong\mathcal{O}_S(D)$. Then for any line bundle $\mathcal{L}$ on $\mathcal{S}$, there exists an open neighborhood $U\subset T$ of $0$ such that for every~$q$, the map $U\ni t\mapsto h^q(\mathcal{L}|_{\mathcal{S}_t})$ is constant.
\end{prop}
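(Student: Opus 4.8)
The plan is to combine Grauert's semicontinuity theorem with the deformation invariance of the Euler characteristic, and to supply the missing lower bounds for $h^0$ by extending effective divisors along the family. Throughout I write $\mathcal{S}_t$ for the fibers, which are again K3 surfaces, so that $K_{\mathcal{S}_t}\cong\mathcal{O}_{\mathcal{S}_t}$ and $h^1(\mathcal{O}_{\mathcal{S}_t})=0$. Since $\chi(\mathcal{L}|_{\mathcal{S}_t})=2+\tfrac12 c_1(\mathcal{L}|_{\mathcal{S}_t})^2$ and the self-intersection is topological, the Euler characteristic is constant in $t$. By Serre duality on the fibers, $h^2(\mathcal{L}|_{\mathcal{S}_t})=h^0(\mathcal{L}^{-1}|_{\mathcal{S}_t})$; hence, using constancy of $\chi$, the whole statement reduces to proving that $t\mapsto h^0(\mathcal{M}|_{\mathcal{S}_t})$ is constant near $0$ for an arbitrary line bundle $\mathcal{M}$ on $\mathcal{S}$ (applied to $\mathcal{M}=\mathcal{L}$ and $\mathcal{M}=\mathcal{L}^{-1}$). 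By upper semicontinuity it suffices to bound $h^0$ from below, and if $M:=\mathcal{M}|_{\mathcal{S}_0}$ is not effective there is nothing to prove.

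So assume $M=\mathcal{O}_S(D)$ with $D=\sum n_iC_i$ effective. The first key point is the geometry of curves on a non-algebraic K3 surface: every irreducible curve $C$ satisfies $C^2\in\{-2,0\}$ (it is even, at least $-2$ by adjunction, and at most $0$ since $S$ carries no class of positive self-intersection), and in either case $h^1(\mathcal{O}_S(C))=0$ while $-C$ is not effective. Extending $\mathcal{O}_S(C_i)$ to a line bundle $\mathcal{L}_{C_i}$ on $\mathcal{S}$ by hypothesis, upper semicontinuity forces $h^1(\mathcal{L}_{C_i}|_{\mathcal{S}_t})=0$ and $h^2(\mathcal{L}_{C_i}|_{\mathcal{S}_t})=h^0(\mathcal{L}_{C_i}^{-1}|_{\mathcal{S}_t})=0$ for $t$ near $0$; together with constancy of $\chi$ this gives $h^0(\mathcal{L}_{C_i}|_{\mathcal{S}_t})=h^0(\mathcal{O}_S(C_i))$, constant. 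In particular $\mathcal{L}_{C_i}|_{\mathcal{S}_t}$ stays effective, so $C_i$ extends to a relative effective divisor $\mathcal{C}_i\subset\mathcal{S}_U$ over a suitable neighbourhood $U$.

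Now set $\mathcal{D}:=\sum n_i\mathcal{C}_i$. Because $h^1(\mathcal{O}_{\mathcal{S}_t})=0$, the Picard group of each fiber injects into $H^2(\mathcal{S}_t,\mathbb{Z})$, and $c_1(\mathcal{O}(\mathcal{D})|_{\mathcal{S}_t})$ and $c_1(\mathcal{M}|_{\mathcal{S}_t})$ are flat sections of the local system $R^2\pi_*\mathbb{Z}$ that agree at $t=0$; hence $\mathcal{M}|_{\mathcal{S}_t}\cong\mathcal{O}_{\mathcal{S}_t}(\mathcal{D}_t)$ for all $t$ near $0$. In particular $\mathcal{M}|_{\mathcal{S}_t}$ is effective and nontrivial, so $h^2(\mathcal{M}|_{\mathcal{S}_t})=h^0(\mathcal{M}^{-1}|_{\mathcal{S}_t})=0$. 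It remains to see that $h^1(\mathcal{M}|_{\mathcal{S}_t})$ is constant. From the structure sequence of $\mathcal{D}_t$ together with the vanishings $h^0(\mathcal{O}_{\mathcal{S}_t}(-\mathcal{D}_t))=h^1(\mathcal{O}_{\mathcal{S}_t})=0$ one obtains $h^1(\mathcal{M}|_{\mathcal{S}_t})=h^0(\mathcal{O}_{\mathcal{D}_t})-1$.

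The main obstacle is precisely this last quantity: I must show that $h^0(\mathcal{O}_{\mathcal{D}_t})$ does not drop. For reduced $D$ it equals the number of connected components of $\mathcal{D}_t$, and two distinct extended components meet on $\mathcal{S}_t$ exactly when their intersection number $\mathcal{C}_i\cdot\mathcal{C}_j=C_i\cdot C_j$ is positive; since these intersection numbers are topological and hence constant, the incidence graph of $\mathcal{D}_t$---and so its number of connected components---is independent of $t$, giving constancy of $h^1$ and therefore of $h^0$. The non-reduced case is the genuinely delicate bookkeeping: here one controls $h^0(\mathcal{O}_{\mathcal{D}_t})$ through the constancy of $\chi(\mathcal{O}_{\mathcal{D}_t})$ (flatness of $\mathcal{D}\to U$) together with a dévissage along the relative components $\mathcal{C}_i$, on each of which---being a family of smooth rational or elliptic curves---cohomology of line bundles is governed by the constant degrees. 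I expect this dévissage, rather than any of the formal reductions, to be where the real work lies.
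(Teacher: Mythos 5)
Your formal reductions (constancy of $\chi$, Serre duality, semicontinuity, the structure of prime divisors on a non-algebraic K3 and the vanishing $h^1=h^2=0$ for them, constancy of $h^0(\mathcal{L}_{C_i}|_{\mathcal{S}_t})$) are all correct and coincide with the paper's. The gap is exactly where you suspect it, but it is worse than "bookkeeping", and it already infects your reduced case. You need $h^0(\mathcal{O}_{\mathcal{D}_t})$ to be constant, and your argument for this relies on the divisors $\mathcal{C}_{i,t}$ on nearby fibers being irreducible, pairwise without common components, so that the incidence graph is governed by the constant numbers $C_i\cdot C_j$. Nothing in the hypotheses keeps the nearby fibers non-algebraic: in the intended application (Proposition~\ref{prop:k3torlb} and Lemma~\ref{lem:k3torr2}) the fibers $\mathcal{S}_t$ are projective for a dense set of $t$, so they carry many curves that do not exist on $\mathcal{S}_0$. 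The unique divisor in the class $[C_i]$ on such a fiber need not be an irreducible curve (a class of square $-2$ can be represented by a chain $\Gamma_1+\Gamma_2$ of $(-2)$-curves with $\Gamma_1\cdot\Gamma_2=1$), and two such divisors can a priori share a component even when $C_i\cdot C_j=0$, in which case $\mathcal{D}_t$ is not even reduced although $D$ was; your incidence-graph count then says nothing about $h^0(\mathcal{O}_{\mathcal{D}_t})$. In the genuinely non-reduced case the proposed dévissage "governed by the constant degrees" also fails in principle: for elliptic components the relevant restrictions are degree-zero line bundles, whose cohomology depends on whether they are trivial, not on their degree, and this is not deformation-invariant. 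Note also that no vanishing can rescue you at the level of $\mathcal{O}_S(D)$ itself: for $D=2C$ with $C$ a $(-2)$-curve one has $h^1(\mathcal{O}_S(2C))=3\ne0$.

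The paper's proof avoids all contact with the geometry of divisors on neighboring fibers by one trick you are missing: it extends sections multiplicatively rather than extending the divisor. Write $D=\sum_i D_i$ with the $D_i$ prime but \emph{not necessarily distinct}, so the section defining $D$ factors as $s=\prod_i s_i$ with $s_i\in H^0(\mathcal{O}_S(D_i))$. Each $\mathcal{O}_S(D_i)$ extends by hypothesis to a line bundle $\mathcal{L}_i$ on $\mathcal{S}$, and $h^0(\mathcal{L}_i|_{\mathcal{S}_t})$ is constant (your own argument); hence by Grauert's base-change theorem each $s_i$ extends, after shrinking $T$, to $\tilde{s}_i\in H^0(\mathcal{L}_i)$. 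Since $\bigl(\bigotimes_i\mathcal{L}_i\bigr)\otimes\mathcal{L}^*$ is trivial on $\mathcal{S}_0$, it is numerically trivial on every fiber and therefore trivial on every fiber (as $h^1(\mathcal{O}_{\mathcal{S}_t})=0$), so $\bigotimes_i\mathcal{L}_i\cong\mathcal{L}$ after shrinking $T$, and $\tilde{s}:=\prod_i\tilde{s}_i$ is a section of $\mathcal{L}$ with $\tilde{s}|_{\mathcal{S}_0}=s$. Applying this to a basis of $H^0(\mathcal{L}_0)$ gives the missing lower bound $h^0(\mathcal{L}_t)\ge h^0(\mathcal{L}_0)$: multiplicities are handled for free, since a component of multiplicity $n$ simply contributes $n$ equal factors, and no statement about irreducibility, connectedness or reducedness of any divisor on $\mathcal{S}_t$ is ever needed. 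If you want to salvage your write-up, replace the analysis of $\mathcal{O}_{\mathcal{D}_t}$ by this multiplicative extension argument.
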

\begin{proof}
For each $t\in T$, we let $\mathcal{L}_t:=\mathcal{L}|_{\mathcal{S}_t}$. If both $h^0(\mathcal{L}_0)\ne0$ and $h^2(\mathcal{L}_0)\ne0$, then $\mathcal{L}_0\cong\mathcal{O}_{\mathcal{S}_0}$. But then all $\mathcal{L}_t$ must be numerically trivial and hence trivial because $h^1(\mathcal{O}_{\mathcal{S}_t})=0$ for all $t$.

So we can assume that either $h^0(\mathcal{L}_0)=0$ or $h^2(\mathcal{L}_0)=0$. By using Serre duality, we can further reduce to the case $h^2(\mathcal{L}_0)=0$. Then, by semi-continuity, we have $h^2(\mathcal{L}_t)=0$ for all sufficiently small $t$. Thus, by constancy of the holomorphic Euler characteristic~$\chi(\mathcal{L}_t)$, it remains to show that $h^0(\mathcal{L}_t)$ is constant, i.e., that any section $s\in H^0(\mathcal{L}_0)$ can be extended to give a section $\tilde{s}\in H^0(\mathcal{L}_U)$ with $\tilde{s}|_{\mathcal{S}_0}=s$.

So, let $s\in H^0(\mathcal{L}_0)$. Then $s$ defines an effective divisor $D=\sum_i D_i$, where the $D_i$ are (not necessarily distinct) prime divisors. By hypothesis, there exist line bundles $\mathcal{L}_i$ with $\mathcal{L}_i|_{\mathcal{S}_0}\cong\mathcal{O}_{\mathcal{S}_0}(D_i)$. Now, since $\mathcal{S}_0$ is a non-algebraic K3 surface, every $D_i$ is either a $(-2)$-curve or the reduction of a fiber of an elliptic fibration of~$S$. In either case, an easy calculation shows that $h^1(\mathcal{O}_{\mathcal{S}_0}(D_i))=h^2(\mathcal{O}_{\mathcal{S}_0}(D_i))=0$. By semi-continuity and constancy of holomorphic Euler characteristic, this implies that $h^0(\mathcal{L}_i|_{\mathcal{S}_t})$ is independent of~$t$. So, if we write $s=\prod_is_i$ with $s_i\in H^0(\mathcal{O}_{\mathcal{S}_0}(D_i))$, the $s_i$ can be extended to give sections $\tilde{s}_i\in H^0(\mathcal{L}_i)$ with $\tilde{s}_i|_{\mathcal{S}_0}=s_i$. But now $(\bigotimes_i\mathcal{L}_i)\otimes \mathcal{L}^*$ is numerically trivial on each $\mathcal{S}_t$ and thus trivial, so $\bigotimes_i\mathcal{L}_i\cong\mathcal{L}$ and the product $\tilde{s}:=\prod_i\tilde{s}_i$ defines a section in $H^0(\mathcal{L})$ with $\tilde{s}|_{\mathcal{S}_0}=s$.
\end{proof}
For tori, we need to choose the extended bundles appropriately:
\begin{prop}\label{prop:torcoh}
Let $S$ be a non-algebraic torus and $\pi\colon\mathcal{S}\to T$ the deformation of $S=\mathcal{S}_0$ from Proposition~\ref{prop:k3torlb}. Then for any line bundle $L$ on $S$, there exists a line bundle $\mathcal{L}$ on $\mathcal{S}$ and an open neighborhood $U\subset T$ of $0$ such that $\mathcal{L}|_{\mathcal{S}_0}\cong L$ and $U\ni t\mapsto h^q(\mathcal{L}|_{\mathcal{S}_t})$ is a constant map for every~$q$.
\end{prop}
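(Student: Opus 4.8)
The plan is to describe line bundles on the torus $S=V/\Lambda$ via the Appell--Humbert theorem: every $L$ has the form $L(H,\chi)$, where $H$ is a Hermitian form on $V$ with $E:=\operatorname{Im}H=c_1(L)$ integral on $\Lambda$, and $\chi\colon\Lambda\to U(1)$ is a semicharacter for $E$. Under the $C^\infty$-trivialization $\mathcal{S}\cong_{C^\infty}S\times\mathcal{K}$ used in Proposition~\ref{prop:k3torlb}, both the lattice $\Lambda$ and the alternating form $E$ stay constant along $T$, while only the complex structure $J_t$ (and hence $H=H_t$) varies; by construction of $T$ the class $E$ remains of type $(1,1)$. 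I would take as $\mathcal{L}$ the extension obtained by \emph{freezing} the topological data $(E,\chi)$ and letting $H_t$ vary, which yields a holomorphic factor of automorphy over $T$ and thus a line bundle on $\mathcal{S}_T$ with $\mathcal{L}|_{\mathcal{S}_0}\cong L$ and $\mathcal{L}_t=L(H_t,\chi)$. Since $\chi(\mathcal{L}_t)=\tfrac12 E^2$ is constant, it suffices to compute each $h^q(\mathcal{L}_t)$ separately and check it does not jump.

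If $E$ is non-degenerate, then $\det_{\mathbb{C}}H_t=\pm\operatorname{Pf}(E)$ is a nonzero constant, so $H_t$ stays non-degenerate and, $T$ being connected, its index $s$ is constant after shrinking to a suitable $U$. The index theorem for line bundles on complex tori then gives $h^q(\mathcal{L}_t)=0$ for $q\neq s$ and $h^s(\mathcal{L}_t)=\lvert\operatorname{Pf}(E)\rvert$, all independent of $t$.

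The delicate case --- and the one genuinely distinguishing tori from the K3 situation of Proposition~\ref{prop:k3coh} --- is that of degenerate $E\neq0$, i.e.\ $E$ of rank $2$. Here $\operatorname{Pf}(E)=0$ forces $H_t$ to be degenerate for every $t$, so its kernel has positive dimension; a short computation identifies this kernel with $\{v\in R: J_tv\in R\}$, where $R:=\operatorname{rad}(E)$ is the (rational) real $2$-plane. Being $J_t$-invariant and nonzero inside the $2$-plane $R$, this kernel must equal $R$, so $R$ is $J_t$-invariant throughout $T$. Hence $R$ cuts out a family of elliptic subtori $A_t\subset\mathcal{S}_t$ with elliptic quotient $\bar{\mathcal{S}}_t:=\mathcal{S}_t/A_t$, and $\mathcal{L}_t|_{A_t}$ is the degree-zero bundle determined by the \emph{constant} character $\chi|_{R\cap\Lambda}$. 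If $\chi|_{R\cap\Lambda}$ is non-trivial then $H^\bullet(A_t,\mathcal{L}_t|_{A_t})=0$, all direct images along $\mathcal{S}_t\to\bar{\mathcal{S}}_t$ vanish, and $h^q(\mathcal{L}_t)=0$ for every $q$; if it is trivial then $\mathcal{L}_t$ is pulled back from a line bundle $\bar L_t$ of the constant nonzero degree on $\bar{\mathcal{S}}_t$, and the Leray spectral sequence gives $h^q(\mathcal{L}_t)=\sum_{a+b=q}h^a(\bar{\mathcal{S}}_t,\bar L_t)\,h^b(A_t,\mathcal{O})$, each factor being a constant elliptic-curve cohomology dimension. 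The remaining case $E=0$ is immediate: the constant semicharacter keeps $\mathcal{L}_t$ equal to $\mathcal{O}$ (if $\chi$ is trivial) or to a fixed non-trivial flat bundle, with constant cohomology in both cases.

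The main obstacle is exactly this rank-$2$ analysis, and it dictates the two structural choices in the construction. First, one must freeze the $\operatorname{Pic}^0$-part of the extension (the constant semicharacter): otherwise $\mathcal{L}_t|_{A_t}$ could wander through $\operatorname{Pic}^0(A_t)$ and meet the trivial bundle, making $h^q$ jump. Second, one must know that the degenerate radical persists as a subtorus, which I would deduce from the $J_t$-invariance of $R$ established above. The one technical point to nail down is the holomorphic dependence on $t$ of the Appell--Humbert data --- that freezing $(E,\chi)$ really produces a holomorphic line bundle on the total space $\mathcal{S}_T$ --- which I would obtain by combining the extendability of the Néron--Severi class furnished by Proposition~\ref{prop:k3torlb} with a relative Appell--Humbert construction for the flat part.
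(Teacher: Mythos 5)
Your fiberwise cohomology analysis is correct, and it follows a genuinely different route from the paper's: the trichotomy on the rank of $E=c_1(L)$, the observation that in the rank-two case $\ker H_t=\{v\in R:\,J_tv\in R\}$ is a nonzero complex subspace of the real $2$-plane $R$ and hence equals $R$, and the resulting constancy of all $h^q$ in each case are sound (and more informative than what the paper establishes). The genuine gap is the step you defer to the end: the existence of a holomorphic line bundle $\mathcal{L}$ on $\mathcal{S}$ with $\mathcal{L}|_{\mathcal{S}_t}\cong L(H_t,\chi)$ for all $t$. First, the claim that freezing $(E,\chi)$ ``yields a holomorphic factor of automorphy over $T$'' is false as stated: the Appell--Humbert cocycle $\chi(\lambda)\exp\bigl(\pi H_t(v,\lambda)+\tfrac{\pi}{2}H_t(\lambda,\lambda)\bigr)$ depends on $t$ through $H_t$, which involves the conjugate period matrix and is therefore only real-analytic in $t$ (already for elliptic curves, $H_\tau(z,\lambda)=z\bar\lambda/\mathrm{Im}\,\tau$ with $\lambda=m+n\tau$); the family $\{L(H_t,\chi)\}_t$ \emph{is} holomorphic, but proving this requires passing to a classical theta-type cocycle, and that is exactly the missing content. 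Second, the repair you propose is circular: Proposition~\ref{prop:k3torlb} produces an extension $\tilde{\mathcal{L}}$ whose fibers are $L(H_t,\chi_t)$ with uncontrolled semicharacters $\chi_t$, and the flat families one gets for free on the total space (constant cocycles $\lambda\mapsto\mu(\lambda)$, using that the family is $(\mathbb{C}^2\times T)/\Lambda$ with holomorphically varying period matrix) change $\chi_t$ only by a $t$-independent character; they cannot correct a wandering $\chi_t$, and knowing that $\{L(0,\chi\chi_t^{-1})\}_t$ is a holomorphic family is equivalent to the assertion you are trying to prove. Note where this actually hurts: in the non-degenerate case no freezing is needed at all, since the index theorem makes $h^q$ independent of the semicharacter, and for $E=0$ the constant cocycle suffices; the gap sits precisely in your rank-two case, the only one where the flat part can wander.

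The paper closes that case by a cheaper reduction which avoids Appell--Humbert data altogether. By Serre duality and semicontinuity (as in Proposition~\ref{prop:k3coh}) it suffices to treat $h^0(L)\ne0$, $h^2(L)=0$; then $L\cong\mathcal{O}_S(D)$ with $D>0$, and since a non-algebraic torus carrying a curve has $a(S)=1$ with every curve a fiber of the algebraic reduction $r\colon S\to C$, one has $D=r^*\delta$. The paper extends $r$ to a relative algebraic reduction $R\colon\mathcal{S}\to\mathcal{C}$ over $T$ (possible because $L$ extends by Proposition~\ref{prop:k3torlb}, so $a(\mathcal{S}_t)\ge1$ for all $t$), extends $\delta$ to a divisor $\tilde\delta$ on $\mathcal{C}$ flat over $T$, and sets $\mathcal{L}:=R^*\mathcal{O}_{\mathcal{C}}(\tilde\delta)$; constancy of the $h^q$ is then immediate because every fiber of $\mathcal{L}$ is pulled back from a divisor of constant positive degree on an elliptic curve. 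In your language, the reduction places one in the rank-two case with trivial character on the radical, and there the bundle is built as a pullback from the relative quotient, which freezes the radical character automatically --- this is also the cleanest way to repair your construction, since the relative quotient by $R$ exists holomorphically thanks to your own $J_t$-invariance observation. What your route buys is a uniform statement with explicit cohomology for every $c_1$; what the paper's buys is that the only nontrivial analytic input is the relative algebraic reduction.
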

\begin{proof}
As in the proof of Proposition~\ref{prop:k3coh}, we can assume $h^2(L)=0$ and $h^0(L)\ne 0$. We then have $L\cong\mathcal{O}_S(D)$ for some divisor $D>0$ on~$S$. Since there are no curves on a torus of algebraic dimension~$0$, we have $a(S)=1$ and thus obtain an elliptic fibration $r\colon S\to C$ over some elliptic curve~$C$. Now all irreducible curves on $S$ are fibers of $r$, so there exists an effective divisor $\delta$ on~$C$ such that $D=r^*\delta$.

Since $L$ extends to $\mathcal{S}$ by Proposition~\ref{prop:k3torlb}, we have $a(\mathcal{S}_t)\ge1$ for all $t\in T$, thus, there exists a relative algebraic reduction $R\colon\mathcal{S}\to\mathcal{C}$ over $T$, where $\mathcal{C}\to T$ is a deformation of $C=\mathcal{C}_0$ and $R|_{\mathcal{S}_0}=r$. We can now choose an effective divisor $\tilde{\delta}$ on $\mathcal{C}$, which is flat over $T$, such that $\tilde{\delta}|_{\mathcal{S}_0}=\delta$. We then obtain the desired extension of the line bundle~$L$ by letting $\mathcal{L}:=R^*\mathcal{O}_{\mathcal{C}}(\tilde{\delta})$.
\end{proof}
The preceding propositions enable us to extend rank-two bundles having a section:
\begin{lem}\label{lem:k3torr2}
Let $S$ be a K3 surface or a torus and $V$ a rank-two vector bundle on $S$ with $H^0(V)\ne0$. Then there exists an algebraic approximation $\pi\colon\mathcal{S}\to T\ni0$ of $S=\mathcal{S}_0$ and a rank-two bundle $\mathcal{V}$ on $\mathcal{S}$ such that $\mathcal{V}|_{\mathcal{S}_0}\cong V$.
\end{lem}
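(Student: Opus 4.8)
The plan is to use the section to realize $V$ as an extension of a twisted ideal sheaf of points by a line bundle, and then to extend each piece of this presentation over the deformation $\pi\colon\mathcal{S}_T\to T$ furnished by Proposition~\ref{prop:k3torlb} (which is an algebraic approximation by the subsequent Remark). First I would normalize the section: a nonzero $s\in H^0(V)$ vanishes along an effective divisor $D$ together with a zero-dimensional locus, and after factoring out the divisorial part the residual section of $W:=V\otimes\mathcal{O}_S(-D)$ vanishes only in codimension two, producing
\[0\longrightarrow\mathcal{O}_S\longrightarrow W\longrightarrow\mathcal{I}_Z\otimes\det W\longrightarrow0,\]
with $Z\subset S$ a zero-dimensional subscheme. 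Since $\mathcal{O}_S(D)$ extends to a line bundle $\mathcal{L}_D$ on $\mathcal{S}_T$ by Proposition~\ref{prop:k3torlb}, it suffices to extend $W$: if $\mathcal{W}$ extends $W$, then $\mathcal{W}\otimes\mathcal{L}_D$ extends $V$. So I may assume from now on that the section vanishes only in codimension two.

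Next I would extend the data of the sequence separately. The determinant $L:=\det W$ extends to a line bundle $\mathcal{L}$ on $\mathcal{S}_T$ by Proposition~\ref{prop:k3torlb}, while $\mathcal{O}_S$ extends as $\mathcal{O}_{\mathcal{S}_T}$. The subscheme $Z$ extends to a relative, flat family $\mathcal{Z}\subset\mathcal{S}_T$: near each point of $Z$ I would choose holomorphic coordinates in which $\pi$ is a coordinate projection and lift the defining ideal by the same generators, giving a zero-dimensional subscheme finite and flat over $T$ with $\mathcal{Z}_0=Z$. It then remains to construct a relative extension
\[0\longrightarrow\mathcal{O}_{\mathcal{S}_T}\longrightarrow\mathcal{W}\longrightarrow\mathcal{I}_{\mathcal{Z}}\otimes\mathcal{L}\longrightarrow0\]
restricting to the given sequence over $\mathcal{S}_0$. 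The restriction $\mathcal{W}|_{\mathcal{S}_t}$ is then locally free near $t=0$ because $\mathcal{W}$ is flat over $T$ (being an extension of the $T$-flat sheaves $\mathcal{I}_{\mathcal{Z}}\otimes\mathcal{L}$ and $\mathcal{O}_{\mathcal{S}_T}$) with locally free central fiber, and local freeness on the fibers of a flat family over a smooth total space is open; after shrinking $\mathcal{S}_T$ to a neighborhood of $\mathcal{S}_0$ it is therefore a genuine rank-two bundle.

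The main obstacle is precisely the construction of this relative extension class. I would analyze $\Ext^1_{\mathcal{O}_S}(\mathcal{I}_Z\otimes L,\mathcal{O}_S)$ through the local-to-global spectral sequence, which on a surface (using $\mathcal{H}om(\mathcal{I}_Z,\mathcal{O}_S)\cong\mathcal{O}_S$) gives
\[0\to H^1(L^{-1})\to\Ext^1(\mathcal{I}_Z\otimes L,\mathcal{O}_S)\to H^0\bigl(\EExt^1(\mathcal{I}_Z\otimes L,\mathcal{O}_S)\bigr)\to H^2(L^{-1}),\]
splitting the extension class into a \emph{local} contribution supported on $Z$ (recording the local structure that makes the extension locally free, i.e.\ the Cayley--Bacharach condition) and a \emph{global} contribution in $H^1(L^{-1})$. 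The local term propagates along $\mathcal{Z}$ by construction, since the relative sheaf $\EExt^1(\mathcal{I}_{\mathcal{Z}}\otimes\mathcal{L},\mathcal{O}_{\mathcal{S}_T})$ is supported on the finite flat family $\mathcal{Z}$, where base change is automatic. The genuine difficulty is the global term, and this is exactly where the cohomology-constancy results enter: applying Proposition~\ref{prop:k3coh} in the K3 case to $\mathcal{L}^{-1}$ (and, in the torus case, applying Proposition~\ref{prop:torcoh} to $L^{-1}$ and dualizing the resulting extension to obtain $\mathcal{L}$) makes $t\mapsto h^q(\mathcal{L}^{-1}|_{\mathcal{S}_t})$ constant, so that $R^i\pi_*\mathcal{L}^{-1}$ commutes with base change near $0$ by Grauert's theorem.

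Putting these together, the relative local-to-global spectral sequence for $\pi$ shows that the relative sheaf $\EExt^1_\pi(\mathcal{I}_{\mathcal{Z}}\otimes\mathcal{L},\mathcal{O}_{\mathcal{S}_T})$ is compatible with restriction to the central fiber, whence the restriction map
\[\Ext^1_{\mathcal{O}_{\mathcal{S}_T}}(\mathcal{I}_{\mathcal{Z}}\otimes\mathcal{L},\mathcal{O}_{\mathcal{S}_T})\to\Ext^1_{\mathcal{O}_S}(\mathcal{I}_Z\otimes L,\mathcal{O}_S)\]
is surjective after shrinking $T$. Choosing a preimage of the given class yields the desired relative extension $\mathcal{W}$, and then $\mathcal{V}:=\mathcal{W}\otimes\mathcal{L}_D$ is a rank-two bundle on $\mathcal{S}_T$ restricting to $V$ on $\mathcal{S}_0$. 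As $\pi$ is an algebraic approximation, this completes the construction.
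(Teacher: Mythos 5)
Your skeleton (Serre construction, extend $\mathcal{O}_S(D)$, $\det W$ and $Z$ separately, then extend the extension class) is the same as the paper's, but there is a genuine gap at exactly the point where the paper has to work hardest: you take an \emph{arbitrary} flat extension $\mathcal{Z}$ of $Z$ (``lift the defining ideal by the same generators in local coordinates''). This is harmless when $h^0(L\otimes\mathcal{I}_Z)=0$, but it breaks down when $Z\ne\emptyset$ sits on divisors of $\lvert L\rvert$. Concretely, let $S$ be a non-algebraic K3 surface whose Néron--Severi group is generated by a single $(-2)$-curve $D$, let $L=\mathcal{O}_S(D)$ and let $Z$ be one reduced point on $D$. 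Then $h^0(L)=1$, $h^1(L)=h^2(L)=0$, the Cayley--Bacharach condition holds (the unique divisor in $\lvert L\rvert$ passes through $Z$), and the Serre construction produces a locally free $W$. In the family of Proposition~\ref{prop:k3torlb}, $D$ deforms to a divisor $\mathcal{D}_t\in\lvert\mathcal{L}_t\rvert$ with $h^0(\mathcal{L}_t)=1$; if $\mathcal{Z}$ is chosen in local coordinates without reference to $\mathcal{D}$, then $\mathcal{Z}_t\not\subset\mathcal{D}_t$ for $t\ne0$, and on such a fiber \emph{no} locally free extension $0\to\mathcal{O}_{\mathcal{S}_t}\to\mathcal{W}_t\to\mathcal{I}_{\mathcal{Z}_t}\otimes\mathcal{L}_t\to0$ can exist: local freeness at the point $\mathcal{Z}_t$ forces the image of the class under $\Ext^1\to H^0\bigl(\EExt^1\bigr)$ to generate, and such a generating section lifts from a global $\Ext^1$ class only if it dies under $H^0\bigl(\EExt^1\bigr)\to H^2(\mathcal{L}_t^{-1})\cong H^0(\mathcal{L}_t)^*$, i.e.\ only if every section of $\mathcal{L}_t$ vanishes at $\mathcal{Z}_t$ --- which is false. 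Since, as you note yourself, any relative extension $\mathcal{W}$ with locally free central fiber is locally free on nearby fibers, no such $\mathcal{W}$ exists at all for this choice of $\mathcal{Z}$; in particular the restriction map on $\Ext^1$ that you claim is surjective is not. The mechanism of failure is visible numerically: $h^0(\mathcal{I}_{\mathcal{Z}_t}\otimes\mathcal{L}_t)$ drops from $1$ to $0$, hence $\dim\Ext^1(\mathcal{I}_{\mathcal{Z}_t}\otimes\mathcal{L}_t,\mathcal{O}_{\mathcal{S}_t})\cong h^1(\mathcal{I}_{\mathcal{Z}_t}\otimes\mathcal{L}_t)$ jumps at $t=0$, so base change for $\EExt^1_\pi$ fails precisely at the central fiber.

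The flaw in your spectral-sequence discussion is the sentence dismissing the local term: the section of $\EExt^1$ does extend constantly along $\mathcal{Z}$, but what is needed is that on \emph{each nearby fiber} it still lifts to a global $\Ext^1$ class, i.e.\ still maps to zero in $H^2(\mathcal{L}_t^{-1})$; that is the Cayley--Bacharach condition for $(\mathcal{Z}_t,\mathcal{L}_t)$ and is not automatic --- it is the genuine difficulty, not the $H^1(L^{-1})$ term. Equivalently, one needs constancy of $h^0(\mathcal{I}_{\mathcal{Z}_t}\otimes\mathcal{L}_t)$, and this is exactly what the paper's proof spends its second half arranging: when $h^0(L\otimes\mathcal{I}_Y)>0$ it chooses the flat family $\mathcal{Y}$ \emph{inside} the deformed divisor (the unique $\mathcal{D}$ extending $D$ in the $a(S)=0$ K3 case, the locally trivial deformation of Proposition~\ref{prop:elsurf} in the $a(S)=1$ K3 case, and a section $\tilde{s}_0$ of $\mathcal{M}$ vanishing along $\mathcal{Y}$ in the torus case). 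Your argument is correct in the case $h^0(L\otimes\mathcal{I}_Z)=0$, and it becomes a proof in general once you add the same case analysis governing the choice of $\mathcal{Z}$ --- at which point it essentially coincides with the paper's.
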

\begin{proof}
Let $0\ne s\in H^0(V)$. If we denote by $G$ the divisorial part of the zero locus of~$s$, we obtain a short exact sequence
\begin{equation}\label{eq:r2seq}
0\longrightarrow\mathcal{O}_S\longrightarrow V\otimes\mathcal{O}_S(-G)\longrightarrow L\otimes\mathcal{I}_Y\longrightarrow0,
\end{equation}
where $L$ is some line bundle on~$S$, and $\mathcal{I}_Y$ is the ideal sheaf of some locally complete intersection subvariety $Y\subset S$ of codimension at least~$2$. We observe that sequence~\eqref{eq:r2seq} is given by an extension class in $\Ext^1(L\otimes\mathcal{I}_Y,\mathcal{O}_S)$, which by Serre duality is isomorphic to $H^1(L\otimes\mathcal{I}_Y)$.

Suppose we have managed to construct an algebraic approximation $\pi\colon\mathcal{S}\to T\ni0$ of $S=\mathcal{S}_0$, a line bundle $\mathcal{L}$ on $\mathcal{S}$ with $\mathcal{L}|_{\mathcal{S}_0}\cong L$ and a locally complete intersection subvariety $\mathcal{Y}\subset\mathcal{S}$, flat over $T$, with $\mathcal{Y}\cap\mathcal{S}_0=Y$. We define $\mathcal{Y}_t:=\mathcal{Y}\cap\mathcal{S}_t$ and suppose further that $h^1(\mathcal{L}_t\otimes\mathcal{I}_{\mathcal{Y}_t})$ is independent of~$t$ (we remark that flatness of $\mathcal{Y}$ implies $\mathcal{I}_{\mathcal{Y}}|_{\mathcal{S}_t}\cong\mathcal{I}_{\mathcal{Y}_t}$). Then the extension class in $H^1(L\otimes\mathcal{I}_Y)=H^1(\mathcal{L}_0\otimes\mathcal{I}_{\mathcal{Y}_0})$ can be extended to give a class in $H^1(\mathcal{L}\otimes\mathcal{I}_{\mathcal{Y}})$. We thus obtain a short exact sequence
\[0\longrightarrow\mathcal{O}_{\mathcal{S}}\longrightarrow\mathcal{V}'\longrightarrow\mathcal{L}\otimes\mathcal{I}_{\mathcal{Y}}\longrightarrow0\]
defining a rank-$2$ vector bundle $\mathcal{V}'$ on~$\mathcal{S}$ with $\mathcal{V}'|_{\mathcal{S}_0}\cong V\otimes\mathcal{O}_S(-G)$. If we have chosen the algebraic approximation such that every effective divisor can be extended as a line bundle, there is a line bundle $\mathcal{G}$ on $\mathcal{S}$ with $\mathcal{G}|_{\mathcal{S}_0}\cong\mathcal{O}_S(D)$, and we can just set $\mathcal{V}:=\mathcal{V}'\otimes\mathcal{G}$.

We now describe how to satisfy the assumptions we made above: We first remark that for any deformation $\pi\colon\mathcal{S}\to T$ of $S=\mathcal{S}_0$, any line bundle $\mathcal{L}$ on $\mathcal{S}$ and any codimension $\ge2$ subvariety $\mathcal{Y}\subset\mathcal{S}$ flat over $T$, we have for every $t\in T$ a short exact sequence
\[
0\longrightarrow(\mathcal{L}\otimes\mathcal{I}_{\mathcal{Y}})|_{\mathcal{S}_t}\longrightarrow\mathcal{L}_t\longrightarrow\mathcal{L}|_{\mathcal{Y}_t}\longrightarrow0,
\]
whose long exact cohomology sequence shows that $h^1(\mathcal{L}_t\otimes\mathcal{I}_{\mathcal{Y}_t})$ is constant provided that $h^0(\mathcal{L}_t\otimes\mathcal{I}_{\mathcal{Y}_t})$ and all the $h^q(\mathcal{L}_t)$ are constant.

Now, if $h^0(L\otimes\mathcal{I}_Y)=0$, we can just choose $\pi$ according to Proposition~\ref{prop:k3torlb}. We can then choose $\mathcal{L}$ with the desired properties according to Propositions \ref{prop:k3coh} and \ref{prop:torcoh}. The constancy of $h^0(\mathcal{L}_t\otimes\mathcal{I}_{\mathcal{Y}_t})$ then follows by semi-continuity for any flat extension $\mathcal{Y}\subset\mathcal{S}$ of~$Y$.

So, we can now assume that $Y\ne\emptyset$ and that $h^0(L\otimes\mathcal{I}_Y)>0$. Then also $h^0(L)>0$ and we have $L\cong\mathcal{O}_S(D)$ for some divisor $D>0$ on $S$. We first consider the case $a(S)=0$: Then $S$ is a K3 surface and $D$ is composed of $(-2)$-curves intersecting (at most) transversally. Furthermore, we have $h^0(L)=1$ and thus $Y\subset D$ as an inclusion of complex spaces. If we choose the algebraic approximation $\pi\colon\mathcal{S}\to T$ according to Proposition~\ref{prop:k3torlb}, we can find by Propositions \ref{prop:k3torlb} and \ref{prop:k3coh} a unique effective divisor $\mathcal{D}$ on $\mathcal{S}$ with $\mathcal{D}\cap\mathcal{S}_0=D$. Letting $\mathcal{D}_t:=\mathcal{D}\cap\mathcal{S}_t$ for $t\in T$, each $\mathcal{D}_t$ is again composed of $(-2)$-curves sharing the same intersection pattern as $D$. Since the $Y$ consists of isolated points only (together with infinitesimal neighborhoods) we can obviously construct a codimension $\ge2$ subvariety $\mathcal{Y}\subset\mathcal{S}$, flat over $T$ with $\mathcal{Y}_0=Y$, such that $\mathcal{Y}\subset\mathcal{D}$ as an inclusion of complex spaces. Setting $\mathcal{L}:=\mathcal{O}_{\mathcal{S}}(\mathcal{D})$, this means that the defining section of $\mathcal{D}_t$ inside $H^0(\mathcal{L}_t)$ actually yields a section inside $H^0(\mathcal{L}_t\otimes\mathcal{I}_{\mathcal{Y}_t})$. We have thus proved $h^0(\mathcal{L}_t\otimes\mathcal{I}_{\mathcal{Y}_t})=1$ for any~$t$.

It remains to consider the case that $a(S)=1$: We first assume $S$ to be a K3 suface. Then we choose the algebraic approximation $\pi\colon\mathcal{S}\to T$ according to Proposition~\ref{prop:elsurf}. As before, we can deform any divisor on $\mathcal{S}$ to a divisor on the neighboring $\mathcal{S}_t$ such that locally, its structure remains unchanged. Thus, by suitably choosing $\mathcal{Y}\subset\mathcal{S}$, we can ensure extendability of all sections in $H^0(\mathcal{S}_t\otimes\mathcal{I}_{\mathcal{Y}_t})$ as before.

If now $S$ is a torus with $a(S)=1$, we investigate the linear system of divisors on $S$ defined by the subspace $H^0(L\otimes\mathcal{I}_Y)\subset H^0(L)$. We can decompose this linear system as
\begin{equation}\label{eq:divdec}
\lvert H^0(L\otimes\mathcal{I}_Y)\rvert=D_0+\lvert L\otimes\mathcal{O}_S(-D_0)\rvert,
\end{equation}
where $D_0$ is an effective divisor on $S$ containing $Y$ as a complex subspace. We choose the algebraic approximation $\pi\colon\mathcal{S}\to T$ according to Proposition~\ref{prop:k3torlb} and line bundles $\mathcal{M}$ and $\mathcal{N}$ on $\mathcal{S}$ with $\mathcal{M}|_{\mathcal{S}_0}\cong\mathcal{O}_S(D_0)$ and $\mathcal{N}|_{\mathcal{S}_0}\cong L\otimes\mathcal{O}_S(-D_0)$ according to Proposition~\ref{prop:torcoh}. We obtain a section $\tilde{s}_0\in H^0(\mathcal{M})$ such that $\tilde{s}_0|_{\mathcal{S}_0}$ is just the defining section of the divisor $D_0$. By choosing $\tilde{s}_0$ suitably, we can achieve that there exists a codimension $\ge2$ subvariety $\mathcal{Y}\subset\mathcal{S}$ which is flat over $T$, such that $\mathcal{Y}_0=Y$ and $\tilde{s}_0$ vanishes along $\mathcal{Y}$. If we now let $\mathcal{L}:=\mathcal{M}\otimes\mathcal{N}$, we can extend any section $\zeta\in H^0(L\otimes\mathcal{I}_Y)$ to a section in $H^0(\mathcal{L}\otimes\mathcal{I}_{\mathcal{Y}})$ by writing $\zeta=s_0\cdot \zeta_0$ according to~\eqref{eq:divdec}, extending $\zeta_0$ to a section $\tilde{\zeta}_0\in H^0(\mathcal{N})$ according to Proposition~\ref{prop:torcoh} and choosing $\tilde{s}_0\cdot\tilde{\zeta}_0$ as extension of $\zeta$.

To conclude the proof, we remark that each of the algebraic approximations of $S$ chosen above has the property that any effective divisor on $S$ extends as a line bundle on $\mathcal{S}$ (cf.~Propositions \ref{prop:elsurf} and \ref{prop:k3torlb}), so the line bundle $\mathcal{G}$ mentioned at the beginning indeed exists.
\end{proof}
Using different arguments, we can also treat the case of simple rank-two bundles:
\begin{lem}\label{lem:simpbun}
Let $S$ be a K3 surface or a torus. Then there exists an algebraic approximation $\pi\colon\mathcal{S}\to T\ni 0$ of $S=\mathcal{S}_0$ such that for any simple rank-two vector bundle $V$ on $S$, there exists a rank-two bundle $\mathcal{V}$ on $\mathcal{S}$ such that $\mathcal{V}|_{\mathcal{S}_0}\cong V$.
\end{lem}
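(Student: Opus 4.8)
The plan is to use a single deformation throughout, namely the algebraic approximation $\pi\colon\mathcal{S}\to T$ produced by Proposition~\ref{prop:k3torlb}, and to reduce the extendability of an arbitrary simple rank-two bundle to a cohomological vanishing on the central fiber. Writing $\mathcal{E}\mathit{nd}(V)=\mathcal{O}_S\oplus\mathcal{E}\mathit{nd}_0(V)$ for the trace decomposition, the extendability of $V$ across $\mathcal{S}\to T$ is governed by a standard obstruction theory: the obstruction to lifting $V$ from the $n$-th to the $(n+1)$-st infinitesimal neighborhood of $\mathcal{S}_0$ in $\mathcal{S}$ is a class in $H^2(S,\mathcal{E}\mathit{nd}(V))$, built from the Atiyah class of the partial extension contracted against the Kodaira--Spencer class of $\pi$. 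If all these obstructions vanish, then $V$ extends to a bundle $\mathcal{V}$ on $\mathcal{S}$.

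The decisive point is the vanishing $H^2(S,\mathcal{E}\mathit{nd}_0(V))=0$ for any simple $V$. Indeed, since $V$ is simple we have $H^0(S,\mathcal{E}\mathit{nd}(V))=\mathbb{C}$, so the trace decomposition forces $H^0(S,\mathcal{E}\mathit{nd}_0(V))=0$. As $S$ is a K3 surface or a torus, $K_S\cong\mathcal{O}_S$; moreover $\mathcal{E}\mathit{nd}_0(V)$ is self-dual via the non-degenerate trace form, so Serre duality gives
\[
H^2(S,\mathcal{E}\mathit{nd}_0(V))\cong H^0(S,\mathcal{E}\mathit{nd}_0(V))^*=0.
\]

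It remains to deal with the trace summand $H^2(S,\mathcal{O}_S)$, which does not vanish (it is one-dimensional for both K3 surfaces and tori). Here I would invoke functoriality of the obstruction class under the trace map $\mathcal{E}\mathit{nd}(V)\to\mathcal{O}_S$: the image of the obstruction to extending $V$ under $\mathrm{tr}_*\colon H^2(\mathcal{E}\mathit{nd}(V))\to H^2(\mathcal{O}_S)$ is exactly the obstruction to extending the determinant line bundle $\det V$. Since $\pi$ was chosen according to Proposition~\ref{prop:k3torlb}, every line bundle on $S$---in particular $\det V$---extends across $\mathcal{S}\to T$, so this determinant obstruction vanishes. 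Combined with the vanishing of the $\mathcal{E}\mathit{nd}_0$-summand and the splitting $H^2(\mathcal{E}\mathit{nd}(V))=H^2(\mathcal{O}_S)\oplus H^2(\mathcal{E}\mathit{nd}_0(V))$, on which $\mathrm{tr}_*$ restricts to an isomorphism on the first factor, the full obstruction class vanishes at every order. Hence $V$ extends to the desired $\mathcal{V}$ on $\mathcal{S}$.

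The main obstacle I anticipate is not the order-by-order vanishing but its globalization: the obstruction calculus produces $\mathcal{V}$ a priori only formally, or over a $V$-dependent neighborhood of $0\in T$, whereas the statement demands a single family $\pi\colon\mathcal{S}\to T$ that works simultaneously for all simple $V$. I would handle this by exploiting that the deformation problem is genuinely unobstructed---the relevant obstruction group $H^2(\mathcal{E}\mathit{nd}_0(V))$ is zero, not merely the specific classes---so that the linearized extension equation can be solved over the whole smooth base $T$ furnished by Proposition~\ref{prop:k3torlb} without shrinking, convergence then following from the smoothness of $T$ and the absence of obstructions. Verifying that this globalization is uniform in $V$---so that one and the same $\mathcal{S}\to T$ serves every simple bundle---is the delicate step, and is where I expect the bulk of the technical work to lie.
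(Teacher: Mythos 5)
Your approach is sound in its core ideas but follows a genuinely different route from the paper. The paper never extends $V$ directly by an obstruction calculus. Instead it deforms the projectivization: since for rank two one has $S^2V\otimes\det V^*\cong\mathcal{E}\mathit{nd}_0(V)$, your vanishing $H^2(\mathcal{E}\mathit{nd}_0(V))=0$ (proved by exactly your argument: simplicity plus Serre duality with $K_S\cong\mathcal{O}_S$) reads $h^2(T_{\mathbb{P}(V)/S})=0$, and Horikawa's theory of deformations of maps (\cite[Theorem~6.1]{Hor74}) then yields a deformation $\psi\colon\mathcal{X}\to T$ of $\mathbb{P}(V)$ together with a holomorphic map $F\colon\mathcal{X}\to\mathcal{S}$ extending $f\colon\mathbb{P}(V)\to S$. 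The vector bundle is recovered only afterwards: extend $\det V$ to a line bundle $\mathcal{G}$ on $\mathcal{S}$ (Proposition~\ref{prop:k3torlb}), note that $K_{\mathcal{X}/\mathcal{S}}^*\otimes F^*\mathcal{G}$ restricts to $\mathcal{O}_{\mathbb{P}(V)}(2)$ on the central fiber, take a square root $\mathcal{K}$ of this line bundle (the obstruction to a square root is purely topological and the family is $C^\infty$-trivial), and set $\mathcal{V}:=F_*\mathcal{K}$. So both proofs turn on the same vanishing and both use Proposition~\ref{prop:k3torlb} to extend $\det V$; what the paper's detour through $\mathbb{P}(V)$ buys is that all formal and convergence issues are absorbed into Horikawa's theorem, a ready-made analytic statement, whereas your route is conceptually shorter but must set up the deformation theory of bundles along a deformation of the base itself. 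One small correction to your trace argument: ``the trace of the obstruction is the obstruction for $\det V$'' refers to the determinant of the chosen order-$n$ extension, so you must keep that determinant identified with $\mathcal{G}$ restricted to the $n$-th infinitesimal neighborhood at every step; the clean formulation is to work with fixed-determinant deformations, whose obstruction space is precisely $H^2(\mathcal{E}\mathit{nd}_0(V))=0$.

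The one genuine gap is the step you yourself flag: passing from order-by-order extendability to an actual holomorphic bundle on $\mathcal{S}$. Unobstructedness gives a compatible system of bundles on all infinitesimal neighborhoods of $\mathcal{S}_0$, but ``convergence follows from the smoothness of $T$ and the absence of obstructions'' is not an argument; in the analytic category you need an actual theorem here---for instance the existence of a semiuniversal deformation of the pair $(S,V)$ together with smoothness of the forgetful map to the Kuranishi space of $S$, or a relative Douady-space/base-change argument of the kind used in Lemma~\ref{lem:blowind}. Note also that any such argument produces $\mathcal{V}$ only after shrinking $T$ to a $V$-dependent neighborhood of $0$, so your worry about uniformity in $V$ is legitimate as far as the literal quantifiers of the lemma go; but this is not something the paper's proof resolves either (Horikawa's theorem is likewise a statement about germs), and it is harmless for the only application, Theorem~\ref{thm:projr2b}, where a single $V$ is treated at a time and a shrunken family is still an algebraic approximation.
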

\begin{proof}
We let $\pi\colon\mathcal{S}\to T$ be the algebraic approximation of $S=\mathcal{S}_0$ according to Proposition~\ref{prop:k3torlb}. We consider the projective bundle $\mathbb{P}(V)$ over $S$. An easy calculation shows that $h^2(T_{\mathbb{P}(V)/S})=h^2(S^2V\otimes\det V^*)=0$. This implies by~\cite[Theorem~6.1]{Hor74} that there exists a deformation $\psi\colon\mathcal{X}\to T$ of $\mathbb{P}(V)=\mathcal{X}_0$ and a holomorphic map $F\colon\mathcal{X}\to\mathcal{S}$ extending the natural projection $f\colon\mathbb{P}(V)\to S$. We observe that the bundle $V$ is given by
$V=f_*\mathcal{O}_{\mathbb{P}(V)}(1)$ and that $\mathcal{O}_{\mathbb{P}(V)}(2)\cong K_{\mathbb{P}(V)/S}^*\otimes f^*\det V$. By Proposition~\ref{prop:k3torlb}, there exists a line bundle $\mathcal{G}$ on $\mathcal{S}$ with $\mathcal{G}|_{\mathcal{S}_0}\cong\det V$. We define a line bundle $\mathcal{L}$ on $\mathcal{X}$ as $\mathcal{L}:=K_{\mathcal{X}/\mathcal{S}}\otimes F^*\mathcal{G}$. Now obviously the bundle $\mathcal{L}_0\cong\mathcal{O}_{\mathbb{P}(V)}(2)$ has a square root. The obstruction to having a square root is purely topological, so by the topological triviality of~$\psi$, also $\mathcal{L}$ has a square root, i.e.~a line bundle $\mathcal{K}$ with $\mathcal{K}^{\otimes2}\cong\mathcal{L}$. We can then define $\mathcal{V}:=F_*\mathcal{L}$ to obtain a rank-$2$ bundle on $\mathcal{S}$ with $\mathcal{V}|_{\mathcal{S}_0}\cong V$.
\end{proof}
The following lemma allows to carry over the above statements to blown-up K3 surfaces and tori:
\begin{lem}\label{lem:blowind}
Let $S$ be a compact surface, $p\colon\hat{S}\to S$ the blow-up of a point $s_0\in S$ and $\hat{V}$ a vector bundle on $\hat{S}$. Suppose that there exists an algebraic approximation $\pi\colon\mathcal{S}\to T\ni0$ of $S=\mathcal{S}_0$ and a vector bundle $\mathcal{V}$ on $\mathcal{S}$ such that
\[\mathcal{V}|_{\mathcal{S}_0}\cong (p_*\hat{V})^{**}.\]
Then, after shrinking $T$ sufficiently, there exists an algebraic approximation $\hat{\mathcal{S}}\to T$ of $\hat{S}=\hat{\mathcal{S}}_0$, a proper modification $P\colon\hat{\mathcal{S}}\to\mathcal{S}$ over $T$ with $P|_{\hat{\mathcal{S}}_0}=p$ and a vector  bundle $\hat{\mathcal{V}}$ on $\hat{\mathcal{S}}$ such that
\[\hat{\mathcal{V}}|_{\hat{\mathcal{S}}_0}\cong\hat{V}.\]
\end{lem}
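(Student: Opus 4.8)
The plan is to realize $\hat{\mathcal{S}}$ as a relative blow-up of $\mathcal{S}$, and then to build $\hat{\mathcal{V}}$ by performing in family the sequence of elementary transformations along the exceptional curve that relates $p^*W$ to $\hat V$, where $W:=(p_*\hat V)^{**}$. First I would extend the center $s_0$ to a section: since $\pi$ is a proper submersion, after shrinking $T$ there is a holomorphic section $\sigma\colon T\to\mathcal{S}$ with $\sigma(0)=s_0$ (local existence of a section of a submersion through a prescribed point). Setting $\Sigma:=\sigma(T)$, a smooth codimension-two submanifold, I let $P\colon\hat{\mathcal{S}}:=\mathrm{Bl}_\Sigma\mathcal{S}\to\mathcal{S}$ be the blow-up and $\hat\pi:=\pi\circ P$. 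Then $\hat\pi$ is again a proper submersion whose fiber over $t$ is $\mathrm{Bl}_{\sigma(t)}\mathcal{S}_t$; in particular $\hat{\mathcal{S}}_0=\mathrm{Bl}_{s_0}S=\hat S$ and $P|_{\hat{\mathcal{S}}_0}=p$, which furnishes the required proper modification over $T$. That $\hat\pi$ is an algebraic approximation is then immediate: along the dense sequence $t_k\to0$ with $\mathcal{S}_{t_k}$ projective, the fiber $\hat{\mathcal{S}}_{t_k}$ is the blow-up of a projective manifold at a point and hence projective. So the entire difficulty is concentrated in constructing the bundle $\hat{\mathcal{V}}$.

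For the bundle I would first record a purely fiberwise fact: $\hat V$ is obtained from $p^*W$ by finitely many elementary transformations along $E$. This rests on the standard statement that a vector bundle on $\hat S$ is a pullback from $S$ if and only if its restriction to $E$ is trivial, together with the fact that any bundle can be brought into that form by finitely many elementary transformations along $E$ (balancing the splitting type $\hat V|_E=\bigoplus_i\mathcal{O}_E(a_i)$ to a constant one and then untwisting by a power of $\mathcal{O}(E)$). The point that pins down the endpoint is that every elementary transformation $0\to F'\to F\to\iota_{E*}Q\to0$ leaves $(p_*{-})^{**}$ unchanged: the cokernel of $p_*F'\to p_*F$ is supported at $s_0$, and locally free sheaves on the smooth surface $S$ that agree away from the single point $s_0$ are isomorphic (the identification on $S\setminus\{s_0\}$ extends by Hartogs, as $\mathcal{H}om$ is locally free). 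Hence the pullback reached at the end of the chain is exactly $p^*W$ for the given $W=(p_*\hat V)^{**}$.

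It then remains to run the same chain relative to $\hat\pi$, along the relative exceptional divisor $\mathcal{E}:=P^{-1}(\Sigma)=\mathbb{P}(N_{\Sigma/\mathcal{S}})$, a $\mathbb{P}_1$-bundle over $\Sigma\cong T$ with $\mathcal{E}_0=E$. I start from $P^*\mathcal{V}$, whose central fiber is $p^*(\mathcal{V}|_{\mathcal{S}_0})=p^*W$. Each step requires extending a fiberwise quotient $F|_E\to Q$ (say $Q=\bigoplus_j\mathcal{O}_E(b_j)$) to a surjection $\mathcal{F}|_{\mathcal{E}}\to\mathcal{Q}$ onto a locally free sheaf on $\mathcal{E}$ restricting to $Q$ on $E$, and then taking $\mathcal{F}':=\ker(\mathcal{F}\to\iota_{\mathcal{E}*}\mathcal{Q})$. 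I would take $\mathcal{Q}:=\bigoplus_j\mathcal{O}_{\mathcal{E}/T}(b_j)$, using the relative $\mathcal{O}_{\mathcal{E}/T}(1)$ coming from the projectivized-bundle structure (so $\mathcal{Q}|_E=Q$). The extension of the quotient map itself then amounts to lifting a section of $\mathcal{H}om(\mathcal{F}|_{\mathcal{E}},\mathcal{Q})$ from $E$ to a neighborhood, which succeeds after shrinking $T$ because the relevant $\mathrm{Hom}$-cohomology is constant near $0$ by semicontinuity, while surjectivity of the lifted map is an open and (by properness of $\mathcal{E}\to T$) fiberwise condition. The kernel $\mathcal{F}'$ is locally free (the standard elementary modification along the smooth divisor $\mathcal{E}$ with locally free quotient); and, crucially, since $\iota_{\mathcal{E}*}\mathcal{Q}$ is flat over $T$, the sequence stays exact upon restriction to the central fiber, so $\mathcal{F}'|_{\hat S}=\ker(F\to\iota_{E*}Q)$ is precisely the intended transform, with no spurious $\mathrm{Tor}$ contribution.

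I expect this last family step to be the main obstacle, since it is exactly here that one must simultaneously control the modification data (extendability of the quotient maps) and the behavior of the central fiber ($\mathrm{Tor}$-freeness of the restriction). Both are resolved by the two mechanisms above — constancy of cohomology after shrinking $T$, and flatness over $T$ — and since the chain has only finitely many steps, shrinking $T$ at each step still leaves a neighborhood of $0$. Running the chain produces a vector bundle $\hat{\mathcal{V}}$ on $\hat{\mathcal{S}}$ with $\hat{\mathcal{V}}|_{\hat{\mathcal{S}}_0}\cong\hat V$, completing the construction.
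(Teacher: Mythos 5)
Your construction of $\hat{\mathcal{S}}$ is exactly the paper's: blow up $\mathcal{S}$ along a section $\Sigma$ through $s_0$, so that $P|_{\hat{\mathcal{S}}_0}=p$ and projectivity of the fibers $\hat{\mathcal{S}}_{t_k}$ is automatic. Your fiberwise analysis is also essentially sound: elementary modifications along $E$ and twists by $\mathcal{O}(E)$ do not change the bidual pushforward (your Hartogs argument is correct), so the pullback reached at the end of a chain must be $p^*W$. One bookkeeping slip: since the canonical map gives $p^*W\subset\hat V$ with quotient $Q$ supported on $E$, we have $c_1(\hat V)=p^*c_1(W)+dE$ with $d\ge 0$, whereas every kernel in a descending chain starting at $P^*\mathcal{V}$ has $c_1=p^*c_1(W)-mE$ with $m\ge0$ on the central fiber; so your relative chain must start at $P^*\mathcal{V}\otimes\mathcal{O}_{\hat{\mathcal{S}}}(N\mathcal{E})$ for suitable $N$, not at $P^*\mathcal{V}$. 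This is harmless, since $\mathcal{O}(\mathcal{E})$ restricts to $\mathcal{O}(E)$ on the central fiber.

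The genuine gap is in the family step, exactly where you flagged the main obstacle. You justify extending each fiberwise quotient $F^{(i)}|_E\to Q_i$ to a surjection $\mathcal{F}^{(i)}|_{\mathcal{E}}\to\mathcal{Q}_i$ by asserting that ``the relevant $\mathrm{Hom}$-cohomology is constant near $0$ by semicontinuity.'' Semicontinuity gives only \emph{upper} semicontinuity, and the failure mode is precisely a jump at $t=0$: after the first modification, $\mathcal{F}^{(i)}|_{\mathcal{E}_t}$ is determined by an extension class varying with $t$, and since splitting types on $\mathbb{P}_1$ jump up only on closed sets, the nearby fibers can be strictly more balanced than the central one. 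If, say, the next step of your fiberwise chain projects onto a minimal-degree summand $\mathcal{O}_E(b)$ of $F^{(i)}|_E$ while for $t\ne0$ all summands of $\mathcal{F}^{(i)}|_{\mathcal{E}_t}$ have degree $>b$, then no surjection onto $\mathcal{O}_{\mathcal{E}_t}(b)$ exists on nearby fibers at all, so the quotient map cannot extend --- and shrinking $T$ cannot cure a jump located at the center. The missing ingredient is the one the paper's (one-step) argument is built on: the deformation is \emph{locally trivial near the exceptional set}, i.e.\ a neighborhood $\mathcal{U}$ of $\mathcal{E}$ is isomorphic to $U\times T$ with $P^*\mathcal{V}|_{\mathcal{U}}$ trivial. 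The paper extends the single quotient $Q=\hat V/p^*V$ (an arbitrary coherent sheaf supported on $E$; no chain and no local freeness needed) \emph{constantly} in this product, which forces $\dim\Ext^1\bigl(\mathcal{Q}|_{\hat{\mathcal{S}}_t},(P^*\mathcal{V})|_{\hat{\mathcal{S}}_t}\bigr)$ to be independent of $t$, and then extends the extension class of $0\to p^*V\to\hat V\to Q\to0$ using the base-change theorem for relative $\EExt$ sheaves. If you graft this local triviality into your scheme --- choosing every $\mathcal{Q}_i$ and every quotient map to be the constant extension in the product $U\times T$ --- then constancy becomes trivial and your chain closes up; without it, the appeal to semicontinuity is not a proof.
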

\begin{proof}
We choose a submanifold $\mathcal{C}\subset\mathcal{S}$ which is mapped isomorphically onto $T$ by $\pi$ such that $\mathcal{C}\cap\mathcal{S}_0=\{s_0\}$. We now let $P\colon\hat{\mathcal{S}}\to\mathcal{S}$ be the blow-up of $\mathcal{C}\subset\mathcal{S}$. The deformation $\hat{\pi}:=\pi\circ P\colon\hat{\mathcal{S}}\to T$ is then an algebraic approximation of $\hat{S}=\hat{\mathcal{S}}_0$.

It remains to construct the desired bundle $\hat{\mathcal{V}}$ on $\hat{\mathcal{S}}$: We let $V:=(p_*\hat{V})^{**}$. Since $V$ is a reflexive sheaf on the two-dimensional manifold $S$, it is locally free. The bidual of the natural map $p^*p_*\hat{V}\to\hat{V}$ induces a map
\[\iota\colon p^*V\to\hat{V}^{**}=\hat{V}.\]
If we let $\mathcal{E}\subset\hat{\mathcal{S}}$ be the exceptional divisor of $P$, then $E:=\mathcal{E}\cap\mathcal{S}_0\subset\mathcal{S}_0$ is the exceptional divisor of $p$ and $\iota$ is an isomorphism over $\hat{\mathcal{S}}_0\setminus E$. We thus obtain a short exact sequence
\begin{equation}\label{eq:blowindseq}
0\longrightarrow p^*V\longrightarrow\hat{V}\longrightarrow Q\longrightarrow 0,
\end{equation}
given by a cohomology class in~$\Ext^1(Q,p^*V)$, where $Q$ is a coherent sheaf on $\hat{\mathcal{S}}_0$ with support contained in $E$. We want to construct an extension of sequence~\eqref{eq:blowindseq} to all of $\hat{\mathcal{S}}$.

By hypothesis, there is a vector bundle $\mathcal{V}$ on $\mathcal{S}$ with $\mathcal{V}|_{\mathcal{S}_0}\cong V$. We can choose an open neighborhood of $\mathcal{E}$ inside $\hat{\mathcal{S}}$ such that $P^*\mathcal{V}|_{\mathcal{U}}$ is trivial and such that $\hat{\pi}$ induces an isomorphism $\mathcal{U}\cong U\times T$ for some open neighborhood $U$ of $E$ in $\hat{\mathcal{S}}_0$. Via this isomorphism, we can extend the sheaf $Q$ constantly to give a coherent sheaf on $\mathcal{U}$. The support of this sheaf being contained in $\mathcal{E}$, we can extend by zero to obtain a coherent sheaf $\mathcal{Q}$ on~$\hat{\mathcal{S}}$. By construction, we have for every point $t\in T$:
\[\Ext^1(\mathcal{Q}|_{\hat{\mathcal{S}}_t},(P^*\mathcal{V})|_{\hat{\mathcal{S}}_t})\cong\Ext^1(Q|_U,\mathcal{O}_U),\]
so in particular, the dimension
\[\dim\Ext^1(\mathcal{Q}|_{\hat{\mathcal{S}}_t},(P^*\mathcal{V})|_{\hat{\mathcal{S}}_t})\]
is independent of~$t$. By~\cite[Satz~3]{BPS80}, this implies that the base change morphism
\[\EExt^1_{\hat{\pi}}(\mathcal{Q},P^*\mathcal{V})\otimes\mathbb{C}(t)\to\Ext^1(\mathcal{Q}|_{\hat{\mathcal{S}}_t},(P^*\mathcal{V})|_{\hat{\mathcal{S}}_t})\]
is an isomorphism for any $t\in T$. This means that (after shrinking $T$ sufficiently) we can extend the class of sequence~\eqref{eq:blowindseq} to a class in $\Ext^1(\mathcal{Q},P^*\mathcal{V})$ giving a short exact sequence
\[0\longrightarrow P^*\mathcal{V}\longrightarrow\hat{\mathcal{V}}\longrightarrow\mathcal{Q}\longrightarrow0,\]
which defines a bundle $\hat{\mathcal{V}}$ as desired.
\end{proof}
We can now prove the central result of this section:
\begin{proof}[Proof of Theorem~\ref{thm:projr2b}]
It is sufficient to construct an algebraic approximation $\pi\colon\mathcal{S}\to T$ of $S=\mathcal{S}_0$ such that there exists a vector bundle $\mathcal{V}$ on $\mathcal{S}$ with $\mathcal{V}|_{\mathcal{S}_0}\cong V\otimes L$ for some line bundle $L$ on $S$ (we can then take $\mathbb{P}(\mathcal{V})$ as an algebraic approximation of $\mathbb{P}(V)$). By Lemma~\ref{lem:blowind}, we can assume $S$ to be minimal, so $S$ is either a K3 surface or a torus. Now, if $H^0(V \otimes L)\ne0$ for some $L$, we are done by Lemma~\ref{lem:k3torr2}. If $H^0(V\otimes L)=0$ for every line bundle $L$ on $S$, then in particular $V$ is simple, so we can apply Lemma~\ref{lem:simpbun}.
\end{proof}

\section{Further results}
\label{sec:algebr-appr-cert}
\subsection{Threefolds bimeromorphic to a product}
\begin{lem}\label{lem:surfapp}
Let $S$ be a compact Kähler surface. Then there exists an algebraic approximation $\pi\colon\mathcal{S}\to T\ni0$ of $S$ with the following property: For any compact curve $C\subset S$ there exists an open subset $U\subset S$ with $C\subset U$ and an open subset $\mathcal{U}\subset\mathcal{S}$ such that
\begin{enumerate}[(i)]
\item $\mathcal{U}\cap\mathcal{S}_0=U$ and
\item $\mathcal{U}$ is isomorphic to the product $U\times T$ over $T$.
\end{enumerate}
\end{lem}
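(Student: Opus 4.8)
The plan is to argue by the algebraic dimension $a(S)\in\{0,1,2\}$ and to reduce the non-minimal case to the minimal one by an induction over blow-ups. Throughout I will in fact establish the following slightly stronger property $(\dagger)$ of the approximation $\pi\colon\mathcal{S}\to T$: \emph{for every point $x\in S$ there is a product neighborhood $\mathcal{U}\cong U\times T$ over $T$ with $\mathcal{U}\cap\mathcal{S}_0=U\ni x$ which is a trivializing neighborhood for every compact curve passing through $x$ simultaneously.} Property $(\dagger)$ clearly implies the lemma (cover the connected components of a compact curve $C$), and, as I explain below, it is exactly what makes the blow-up step work.

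First the two easier minimal cases. If $a(S)=2$ then $S$ is projective and the constant family $\mathcal{S}=S\times T$ is an algebraic approximation satisfying $(\dagger)$ with $\mathcal{U}=\mathcal{S}$. If $a(S)=1$ then $S$ is elliptic and I take the approximation $R\colon\mathcal{S}\to C\times T$ of Proposition~\ref{prop:elsurf}. Since $a(S)=1$, every compact curve on $S$ is contained in a fibre of the algebraic reduction $r\colon S\to C$ (as already used in the proof of Lemma~\ref{lem:disca1}); hence all curves through a point $x$ lie in the single fibre $r^{-1}(r(x))$, and the trivialization $R^{-1}(V\times T)\cong r^{-1}(V)\times T$ over a neighborhood $V\ni r(x)$ from part~(ii) of Proposition~\ref{prop:elsurf} furnishes the neighborhood required in $(\dagger)$.

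The heart of the matter is $a(S)=0$, where the minimal $S$ is a K3 surface or a torus. For the torus there are no compact curves at all, so any algebraic approximation — for instance the one from Proposition~\ref{prop:k3torlb} — satisfies $(\dagger)$ with $\mathcal{U}$ a plain product chart coming from the submersion $\pi$ (a holomorphic submersion is locally a product by the inverse function theorem). For a K3 surface I again take $\pi\colon\mathcal{S}\to T$ from Proposition~\ref{prop:k3torlb}; by construction every class in $NS(S)$ stays of type $(1,1)$ on all fibres, so the finitely many $(-2)$-curves of $S$ persist with their full intersection pattern. These $(-2)$-curves are exactly the compact curves of $S$, and they form finitely many disjoint negative-definite (hence ADE) configurations $Z$. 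Contracting each $Z$ fibrewise produces a flat family of rational double points which is \emph{equisingular}, since the configuration is preserved; as rational double points admit no nontrivial equisingular deformations, this family is locally trivial near the singular section, and pulling back the simultaneous resolution yields a product neighborhood $\mathcal{U}\cong U\times T$ of $Z$. Since all curves through a point of $Z$ lie in $Z$, this gives $(\dagger)$.

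Finally, the reduction to the minimal case. Writing the given surface as an iterated blow-up of its minimal model, it suffices to show that $(\dagger)$ is inherited under a single blow-up. So let $p\colon\hat S\to S$ blow up a point $s_0$, assume $\pi\colon\mathcal{S}\to T$ approximates $S$ and satisfies $(\dagger)$, and let $\mathcal{U}\cong U\times T$ be a product neighborhood trivializing all curves through $s_0$. Take $\mathcal{C}:=\{s_0\}\times T\subset\mathcal{U}$ and let $P\colon\hat{\mathcal{S}}\to\mathcal{S}$ be the blow-up of $\mathcal{C}$; as in Lemma~\ref{lem:blowind} this is again an algebraic approximation, now of $\hat S$. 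Because $\mathcal{C}$ is the \emph{constant} section of the product $U\times T$, one has $P^{-1}(\mathcal{U})\cong(\mathrm{Bl}_{s_0}U)\times T$ with no further straightening needed, so the exceptional curve and the strict transforms of all curves through $s_0$ acquire product neighborhoods at once; curves avoiding $s_0$ keep their neighborhoods since $P$ is an isomorphism there. This verifies $(\dagger)$ for $\hat S$ and lets the induction continue, also through infinitely near centres.

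The main obstacle I anticipate is the $a(S)=0$ K3 case: one must know that preserving the whole $(-2)$-configuration forces local triviality of the complex structure in a full neighborhood, which is the content of the rigidity of equisingular deformations of rational double points and requires the fibrewise contraction to exist in the family. The second, more technical, point is to set up the auxiliary property $(\dagger)$ rather than the bare statement, so that the constant section $\{s_0\}\times T$ is simultaneously compatible with all curves through the blow-up centre; without this compatibility, gluing the product structure near the exceptional curve to the one along a strict transform would fail.
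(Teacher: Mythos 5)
Your treatment of the minimal cases coincides with the paper's: $a(S)=1$ via Proposition~\ref{prop:elsurf}, tori of algebraic dimension zero having no curves, and K3 surfaces via Propositions~\ref{prop:k3torlb} and~\ref{prop:k3coh} together with rigidity of neighborhoods of ADE configurations (the paper compresses your fibrewise-contraction/equisingularity argument into the phrase ``isomorphic configurations of $(-2)$-curves have isomorphic neighborhoods''). The paper only \emph{asserts} the reduction to the minimal case, so your blow-up induction is where your proof adds content --- and that is exactly where it has a genuine gap.

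The auxiliary property $(\dagger)$ is formulated per point: the product neighborhood of $x$ is only required to contain the compact curves \emph{through $x$}. This is too weak, in two places. First, $(\dagger)$ does not ``clearly imply the lemma'': a connected compact curve can have several irreducible components with no point in common (e.g.\ a chain $C_1\cup C_2\cup C_3$ of $(-2)$-curves), so covering it by the neighborhoods from $(\dagger)$ yields several product neighborhoods whose trivializations need not agree on overlaps; gluing them is precisely the difficulty you flag in your last paragraph, and $(\dagger)$ as stated does not remove it. Second, for the same reason the inductive step fails when the blow-up centre $s_0$ lies on a reducible configuration whose components do not all pass through $s_0$: say $Z=C_1\cup C_2$ with $C_1\cap C_2=\{q\}$ and $s_0\in C_1\setminus C_2$. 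Then $(\dagger)$ only guarantees $U_{s_0}\supset C_1$, and your section $\mathcal{C}=\{s_0\}\times T$ is constant with respect to \emph{that} trivialization. After blowing up, the connected compact curve $E\cup\widehat{C}_1\cup C_2$ on $\hat S$ (where $E$ is the exceptional curve) needs a product neighborhood, but in the trivialization of a neighborhood of $q$ or of $C_2$ the centre $\mathcal{C}$ appears as a \emph{non-constant} section, so its preimage under $P$ is a blow-up along a moving point, not a product; nothing in $(\dagger)$ lets you straighten it. The repair is to state the inductive invariant for \emph{connected components of the union of all compact curves} rather than for points: your base constructions already deliver this stronger property (whole fibres of $r$ when $a(S)=1$, whole ADE configurations on a K3), and in the inductive step one must take $\mathcal{C}$ constant with respect to the trivialization attached to the configuration containing $s_0$ (or to a curve-free chart when $s_0$ lies on no curve). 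A smaller point in the same step: ``curves avoiding $s_0$ keep their neighborhoods since $P$ is an isomorphism there'' also needs an argument, because a product neighborhood $\mathcal{U}'\cong U'\times T$ with $s_0\notin U'$ can still meet $\mathcal{C}$ over $t\neq0$; this is fixed by a properness argument after shrinking $T$, which is harmless since there are only finitely many configurations when $a(S)=0$ and the set of projective fibres remains dense under shrinking.
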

\begin{proof}
If $a(S)=1$ then $S$ is elliptic and any connected compact curve on $S$ is contained in a fiber a the elliptic fibration. The claim then follows directly from Proposition~\ref{prop:elsurf}.

If $a(S)=0$, we first observe that we can assume $S$ to be minimal. There are no compact curves on a torus of algebraic dimension $0$, so we are left with the case that $S$ is a K3 surface. Then $C$ is a configuration of $(-2)$-curves. We take $\pi$ to be the deformation from Proposition~\ref{prop:k3torlb}. Then, by Proposition~\ref{prop:k3coh}, $C$ can be extended to a curve $\mathcal{C}\subset\mathcal{S}$ which is proper and flat over~$T$, such that for each $t\in T$, the intersection $\mathcal{C}_t:=\mathcal{C}\cap\mathcal{S}_t$ is a configuration of $(-2)$-curves isomorphic to~$C$. The claim now follows because isomorphic configurations of $(-2)$-curves have isomorphic neighborhoods.
\end{proof}
Lemma~\ref{lem:surfapp} is the essential ingredient to the
\begin{proof}[Proof of Theorem~\ref{thm:main}] By Hironaka's Chow Lemma (cf.~\cite{Hir75}) and resolution of singularities, there exists a compact complex manifold $\hat X$ together with a holomorphic map $p\colon \hat{X}\to X$ obtained as a finite composition of blow-ups along smooth centers such that there exists a proper modification $f\colon\hat{X}\to \mathbb{P}_1\times S$ making the diagram
\[\xymatrix{
&\hat{X}\ar[d]^p\ar[dl]_f\\
\mathbb{P}_1\times S\ar@{-->}[r]_\phi&X
}\]
commute. The center of $f$ has codimension at least $2$ in $\mathbb{P}_1\times S$, so there exists a (not necessarily connected) compact curve $C\subset S$ such that $f$ is an isomorphism over $\mathbb{P}_1\times(S\setminus C)$.

We now take an algebraic approximation $\pi\colon\mathcal{S}\to T$ of $S$ according to Lemma~\ref{lem:surfapp}. Then there is an open neighborhood $U\subset S$ of $C$ and an open subset $\mathcal{U}\subset\mathcal{S}$ with $\mathcal{U}\cap\mathcal{S}_0=U$ and $\mathcal{U}\cong U\times T$ over $T$. We observe that $\tilde{\pi}:=\pr_2\circ(\id\times\pi)\colon\mathbb{P}_1\times\mathcal{S}\to T$ is an algebraic approximation of~$\mathbb{P}_1\times S$ such that the restriction $\tilde{\pi}|_{\mathbb{P}_1\times\mathcal{U}}$ is isomorphic to the natural projection $\mathbb{P}_1\times U\times T\to T$. The modification $f$ thus trivially induces a modification $\tilde{f}\colon f^{-1}(\mathbb{P}_1\times U)\times T\to\mathbb{P}_1\times\mathcal{U}$ which can be glued together with the identity on $\mathbb{P}_1\times(\mathcal{S}\setminus\mathcal{U})$ to give a proper modification $F\colon\hat{\mathcal{X}}\to\mathbb{P}_1\times\mathcal{S}$ such that the composition $\tilde{\pi}\circ F\colon\hat{\mathcal{X}}\to T$ is an algebraic approximation of~$\hat{X}$.

Since $p$ is a composition of blow-ups along smooth centers, we have $p_*\mathcal{O}_{\hat{X}}=\mathcal{O}_X$ and $R^1p_*\mathcal{O}_{\hat{X}}=0$. Using \cite[Theorem~8.2]{Hor76} we conclude (after shrinking $T$ sufficiently) the existence of a proper holomorphic submersion $\psi\colon\mathcal{X}\to T$ and a holomorphic map $P\colon\hat{\mathcal{X}}\to\mathcal{X}$ such that $P\circ\psi=\tilde{\pi}\circ F$, $\psi^{-1}(0)\cong X$ and $P|_{(\tilde{\pi}\circ F)^{-1}(0)}=p$. It is now easy to see that $\psi$ is an algebraic approximation of $X$.
\end{proof}
\begin{rem}
By Fujiki's classification results (\cite[Proposition~14.1]{Fuj83}), every non-algebraic uniruled Kähler threefold carries a meromorphic $\mathbb{P}_1$-fibration.
\end{rem}

\subsection{Conic bundles}
\begin{proof}[Proof of Theorem~\ref{thm:ellcon}]
We let $E:=f_*K_{X/S}^*$. By hypothesis, $r_*E$ is a rank-3 bundle on $C$ and we get an exact sequence
\begin{equation}\label{eq:ellseq}
0\longrightarrow r^*r_*E\stackrel{\alpha}{\longrightarrow}E\longrightarrow Q\longrightarrow 0,
\end{equation}
where $Q$ is a torsion sheaf on~$S$. There exists a finite set $Z\subset C$ such that the support of $Q$ is contained in $r^{-1}(Z)$. This means that $\alpha|_{r^{-1}(C\setminus Z)}$ is an isomorphism.

We choose an algebraic approximation $\pi\colon\mathcal{S}\to T\ni0$ of $S=\mathcal{S}_0$ and an extension $R\colon\mathcal{S}\to C\times T$ of~$r$ according to Proposition~\ref{prop:elsurf}. By the local triviality of $R$ over $T$, there exists a coherent sheaf $\mathcal{Q}$ on $\mathcal{S}$, flat over $T$, with $\mathrm{supp}(\mathcal{Q})\subset R^{-1}(Z\times T)$, such that $\mathcal{Q}|_{\mathcal{S}_0}\cong Q$ and
\[\Ext^1(\mathcal{Q}|_{\mathcal{S}_t},\mathcal{O}_{\mathcal{S}_t})\cong\Ext^1(Q,\mathcal{O}_S)\]
for all $t\in T$. This implies the existence of a rank-$3$ bundle $\mathcal{E}$ on $\mathcal{S}$ sitting in a short exact sequence
\begin{equation}\label{eq:ellseqext}
0\longrightarrow R^*\pr_1^*r_*E\stackrel{\tilde{\alpha}}{\longrightarrow}\mathcal{E}\longrightarrow\mathcal{Q}\longrightarrow0
\end{equation}
whose restriction to~$\mathcal{S}_0$ yields sequence~\eqref{eq:ellseq}. In particular, $\tilde{\alpha}|_{R^{-1}((C\setminus Z)\times T)}$ is an isomorphism. By construction, the section
\[\sigma\in H^0(S^2E\otimes\det E^*)\]
yields a section
\[\tilde{\sigma}\in H^0\left((S^2\mathcal{E}\otimes\det\mathcal{E}^*)|_{R^{-1}((C\setminus Z)\times T)}\right)\]
with $\tilde{\sigma}|_{R^{-1}((C\setminus Z)\times\{0\})}=\sigma|_{r^{-1}(C\setminus Z)}$. By the local triviality of the situation, however, any point $p\in Z$ posesses an open neighborhood $U\subset C$ such that there exists a section
\[\tilde{\tilde{\sigma}}\in H^0((S^2\mathcal{E}\otimes\det\mathcal{E}^*)|_{R^{-1}(U\times T)})\]
with $\tilde{\tilde{\sigma}}|_{R^{-1}(U\times\{0\})}=\sigma|_{r^{-1}(U)}$ and $\tilde{\tilde{\sigma}}|_{R^{-1}((U\setminus\{p\})\times T)}=\tilde{\sigma}|_{R^{-1}((U\setminus\{p\})\times T)}$. The sections $\tilde{\sigma}$ and $\tilde{\tilde{\sigma}}$ glue together to give a section
\[s\in H^0(S^2\mathcal{E}\otimes\det\mathcal{E}^*)\]
with $s|_{\mathcal{S}_0}=\sigma$. The section~$s$ defines a conic bundle over~$\mathcal{S}$ yielding an algebraic approximation of~$X$.
\end{proof}

\begin{proof}[Proof of Theorem~\ref{thm:splitcon}]
Let $E:=f_*K_{X/S}^*$. Then $X\subset\mathbb{P}(E)$ is given by a section $s\in H^0(S^2E\otimes\det E^*)$.

Let $\pi\colon\mathcal{S}\to T$ be the algebraic approximation of $S=\mathcal{S}_0$ from Proposition~\ref{prop:k3torlb}. We write $E\cong L_1\oplus L_2\oplus L_3$ and choose line bundles $\mathcal{L}_i$ on $\mathcal{S}$ with $\mathcal{L}_i|_{\mathcal{S}_0}\cong L_i$ for $i=1$, $2$, $3$. Letting $\mathcal{E}:=\mathcal{L}_1\oplus\mathcal{L}_2\oplus\mathcal{L}_3$, we have $\mathcal{E}|_{\mathcal{S}_0}\cong E$, and by Proposition~\ref{prop:k3coh}, the section $s$ extends to a section $\tilde{s}\in H^0(S^2\mathcal{E}\otimes\det\mathcal{E}^*)$ with $\tilde{s}|_{\mathcal{S}_0}= s$, defining a conic bundle $\mathcal{X}\subset\mathbb{P}(\mathcal{E})$ over $\mathcal{S}$ yielding an algebraic approximation of $\mathcal{X}_0\cong X$.
\end{proof}

\bibliography{literatur}
\end{document}